%%%%%%%%%%%%%%

\documentclass[12pt, regno]{amsart}

\usepackage{amssymb}
\usepackage{amscd}
\usepackage{amsfonts}
\usepackage{setspace}
\usepackage{version}
\usepackage{endnotes}
\usepackage{float}
\usepackage{color}

%\usepackage[inline]{showlabels}

%%%%%%%%%% Start TeXmacs macros

\newtheorem{theorem}{Theorem}[section]
\newtheorem{lemma}[theorem]{Lemma}
\newtheorem{proposition}[theorem]{Proposition}

\newtheorem{corollary}[theorem]{Corollary}

\theoremstyle{remark}

\theoremstyle{definition}

\theoremstyle{remark}

\numberwithin{equation}{section}

%%%%%%%%%% End TeXmacs macros

%\newcommand{i}{\mathrm{i}}

\newcommand{\Z}{\mathbb{Z}}
\newcommand{\R}{\mathbb{R}}

\newcommand{\C}{\mathbb{C}}
\newcommand{\E}{\mathbb{E}}

\newcommand{\e}{{\rm e}}

\newcommand{\ee}{\varepsilon}
\newcommand{\1}{\mathbf{1}}

\newcommand{\dd}{{\rm d}}

%\oddsidemargin -0.25in \evensidemargin -0.25in \textwidth 6.5in

%\sloppy \flushbottom
%\parindent 1em

%\marginparwidth 48pt \marginparsep 10pt \columnsep 10mm

\begin{document}

\title[Bombieri-Vinogradov for multiplicative functions]{Bombieri-Vinogradov for multiplicative functions, and beyond the $x^{1/2}$-barrier}

\author[A. Granville]{Andrew Granville}
\address{AG: D\'epartement de math\'ematiques et de statistique\\
Universit\'e de Montr\'eal\\
CP 6128 succ. Centre-Ville\\
Montr\'eal, QC H3C 3J7\\
Canada; 
and Department of Mathematics \\
University College London \\
Gower Street \\
London WC1E 6BT \\
England.
}
\thanks{A.G.~has received funding from the
European Research Council  grant agreement n$^{\text{o}}$ 670239, and from NSERC Canada under the CRC program.}
\email{{\tt andrew@dms.umontreal.ca}}
\author[X. Shao]{Xuancheng Shao }
\address{Department of Mathematics\\ University of Kentucky\\ 757 Patterson Office Tower\\ Lexington, KY, 40506 \\ USA}
\email{Xuancheng.Shao@uky.edu}
\thanks{X.S.~was supported by a Glasstone Research Fellowship.}
 
\thanks{We would like to thank Dimitris Koukoulopoulos for a valuable discussion, as well as the referees for their careful readthroughs and comments}

\subjclass[2010]{11N56}
\keywords{Multiplicative functions, Bombieri-Vinogradov theorem, Siegel zeroes}

\date{\today}

\begin{abstract}  
Part-and-parcel of the study of ``multiplicative number theory'' is the study of the distribution of multiplicative functions in arithmetic progressions. Although appropriate analogies to the  Bombieri-Vingradov Theorem have been proved for  particular examples of multiplicative functions, there has not previously been headway on a  general theory; seemingly none of the different proofs of the  Bombieri-Vingradov Theorem for primes  adapt well to this situation. In this article we find out why such a result has been so elusive, and discover what can be proved along these lines and develop some limitations. For a fixed residue class $a$ we extend such averages out to moduli $\leq x^{\frac {20}{39}-\delta}$.
\end{abstract}

\maketitle

\section{Formulating the Bombieri-Vingradov Theorem for multiplicative functions}

The Bombieri-Vingradov Theorem, a mainstay of analytic number theory, shows that the prime numbers up to $x$  are ``well-distributed'' in all arithmetic progressions mod $q$, for almost all integers $q\leq x^{1/2-\varepsilon}$. To be more precise, for any given sequence $f(1), f(2), \ldots$, we define
 $$
\Delta(f,x;q,a):=\sum_{\substack{n\leq x \\ n\equiv a \pmod q}} f(n)  -  \frac 1{\varphi(q)} \sum_{\substack{n\leq x \\ (n,q)=1}} f(n).
$$
The Bombieri-Vinogradov Theorem states that if $f$ is the characteristic function for the primes, then  for any given $A>0$ there exists $B=B(A)>0$ such that if $Q\leq x^{1/2}/(\log x)^B$ then 
\begin{equation}\label{eq:BVI}
\sum_{q\sim Q} \max_{a:\ (a,q)=1} | \Delta(f,x;q,a) | \ll \frac x {(\log x)^A},
\end{equation}
where, here and henceforth, ``$q\sim Q$'' denotes the set of integers $q$ in the range $Q<q\leq 2Q$. The analogous result is known to hold when $f=\mu$, the Mobius function, and when $f$ is the characteristic function for the $y$-smooth numbers~\cite{FT, Har}, and the literature is swarming with many other interesting examples besides. There are many proofs of the original Bombieri-Vinogradov Theorem: more modern proofs rely on bilinearity and Vaughan's identity, for example see chapter 28 of \cite{Da}, or the very elegant proof  in Theorems 9.16, 9.17, and 9.18  of  \cite{FIOpera}. The extraordinary generality of the latter proof leads one to guess that something like the Bombieri-Vinogradov Theorem should be true for most sensible arithmetic functions $f$. 

In this paper we focus on proving a Bombieri-Vinogradov type Theorem  for   multiplicative functions $f$ which take values within the unit circle, something that has been proved for several interesting examples, and one might guess is true in some generality. However one needs to be careful: if $f(n)=(n/3)$, the quadratic character mod 3, then $f$ is certainly not well-distributed in arithmetic progressions mod 3, nor in arithmetic progressions mod  $q$, whenever 3 divides $q$. More generally if $f=\chi$ is a primitive character mod $r$, or even if $f$ is ``close'' to $\chi$, then $f$ is  not well-distributed in arithmetic progressions  mod  $q$, whenever $r$ divides $q$. So this is a significant departure from the classical Bombieri-Vinogradov type Theorem, in that it seems likely that\footnote{This is  discussed in detail in section \ref{ClaimJusitify}, and   proved there up to a factor of $\log x$.}
\[
\sum_{q\sim Q} \max_{a:\ (a,q)=1} | \Delta(f,x;q,a) | \gg \max_{\substack{\chi \pmod r \\ \chi \ \text{primitive} \\ r>1}} \frac 1{\varphi(r)} |S_f(x,\chi)|,
\]
where
\[
 S_f(x,\chi):= \sum_{n\leq x} f(n) \overline{\chi}(n) .
\]
Therefore if $f$ strongly correlates with some $\chi$ of ``small" conductor 
(that is, $S_f(x,\chi)\gg x/(\log x)^C$ for some non-principal character $\chi$ of conductor $\ll (\log x)^D$, for  for some fixed $C,D>0$) then  \eqref{eq:BVI} cannot hold for all $A$. So to prove something like \eqref{eq:BVI} we need to  assume that no such character exists. The usual way to formulate this involves equidistribution for moduli of small conductor (see e.g. (9.68) in \cite{FIOpera}): For any given $A>0$ we have
\medskip

\noindent \textbf{The $A$-Siegel-Walfisz criterion}: We say that $f$ \emph{satisfies the $A$-Siegel-Walfisz criterion} if
\[
\sum_{\substack{n\leq x \\ n\equiv a \pmod q \ }} f(n)  -  \frac 1{\varphi(q)} \sum_{\substack{n\leq x \\ (n,q)=1}} f(n) \ll_A \frac x {(\log x)^A}
\]
for all $(a,q)=1$ and $x \geq 2$.
We say that $f$ \emph{satisfies the Siegel-Walfisz criterion} if this holds for any fixed $A>0$.

Let
\[
F(s) = \sum_{n=1}^{\infty} \frac{f(n)}{n^{s}}\ \ \text{and} \ \
- \frac{F'(s)}{F(s)} = \sum_{n=2}^{\infty}   \frac{\Lambda_f(n)}{n^{s}},  
\]
for Re$(s)>1$.  Following \cite{GHS}, we restrict  attention to the class ${\mathcal C} $ of multiplicative functions $f$ for which
\[
 |\Lambda_f(n)|\leq   \Lambda(n) \ \ \textrm{for all} \ \ n\geq 1.
 \]
This includes  most multiplicative functions of interest, including all $1$-bounded completely multiplicative functions. Two key observations are that if $f\in \mathcal C$ then each $|f(n)|\leq 1$, and if $F(s)G(s)=1$ then $g\in \mathcal C$.
 
 \begin{corollary} \label{Thm: SW}  Let $f$ be a  multiplicative function with $f\in \mathcal C$, and assume that $f$ satisfies the 
 $1$-Siegel-Walfisz criterion. Fix $\delta, \varepsilon>0$.
 If  $Q \leq x^{1/2-\delta}$ then  
\[ 
\sum_{q\sim Q} \max_{a:\ (a,q)=1} | \Delta(f,x;q,a) | \ll    \frac{x} {(\log x)^{1-\varepsilon}}  .
 \]
\end{corollary}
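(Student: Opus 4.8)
The plan is to dissect $n\le x$ according to the size of its largest prime factor $P^{+}(n)$ and to reduce everything to three kinds of input: the Brun--Titchmarsh inequality for the integers with an unusually large prime factor, the large sieve for a genuinely bilinear (Type II) contribution, and the Siegel--Walfisz hypothesis for the residual ``medium'' ranges. Set $w=\exp(\sqrt{\log x})$, $y=\exp((\log x)^{\varepsilon})$, $z=x/y$, and fix a small $\eta>0$ depending on $\delta$. Standard preliminary reductions dispose of the contribution of $n<\sqrt x$, of the primes themselves, and of the integers divisible by a large prime power, each of which contributes $\ll x/\log x$ to the average. Every remaining $n$ then satisfies exactly one of: (I) $P^{+}(n)>z$; (II) $n$ is $w$-smooth; (III) $w<P^{+}(n)\le z$.

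Case (I) is the heart of the matter, and is where the power of $\log x$ is lost. Since $z>\sqrt x$, such $n$ factor uniquely as $n=pm$ with $p=P^{+}(n)\,\|\,n$ and cofactor $m=n/p<x/z=y$. As $m<y$ is tiny while $x/m\ge z=x^{1-o(1)}\gg q\,x^{\delta/2}$ for all $q\sim Q$, the Brun--Titchmarsh inequality gives $\#\{z<p\le x/m:\ pm\equiv a\pmod{q}\}\ll(x/m)/(\varphi(q)\log x)$ uniformly in $a$, and likewise for the corresponding main term; summing $|f(m)|\le 1$ over $m<y$ and using $\sum_{m<y}1/m\ll\log y=(\log x)^{\varepsilon}$ yields $\max_{a}|\Delta^{(\mathrm{I})}(f,x;q,a)|\ll x/(\varphi(q)(\log x)^{1-\varepsilon})$, and hence, since $\sum_{q\sim Q}1/\varphi(q)\ll 1$, a total contribution $\ll x/(\log x)^{1-\varepsilon}$. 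The point is that $f$ is completely unconstrained on the primes $p\in(z,x]$, so no saving beyond Brun--Titchmarsh is available here --- this is precisely why one cannot do better than $x/(\log x)^{1-\varepsilon}$ in general.

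Case (II) is negligible: there are only $\ll_{A}x/(\log x)^{A}$ integers $n\le x$ that are $w$-smooth, and a Brun--Titchmarsh estimate for smooth numbers in progressions (valid for $q\le x^{1/2-\delta}$; see \cite{FT,Har}) keeps each modulus's contribution down to $O(x/(\varphi(q)(\log x)^{A}))$. The bulk of the work is case (III), handled by a combinatorial (Buchstab-type) decomposition: peel off a maximal prime-power divisor $p^{v}\,\|\,n$, so that $f(n)=f(p^{v})f(n/p^{v})$, localise $p^{v}\sim P$ and $n/p^{v}\sim K$ with $PK\asymp x$, and expand the smoothness and ordering constraints combinatorially. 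When both $P$ and $K$ fall in the window $[x^{\eta},x^{1-\eta}]$ one has a Type II bilinear sum $\sum_{p^{v}\sim P}\sum_{k\sim K}\alpha_{p^{v}}\beta_{k}\bigl(\mathbf{1}[p^{v}k\equiv a]-\varphi(q)^{-1}\mathbf{1}[(p^{v}k,q)=1]\bigr)$, which the large sieve estimates as $\ll_{A}x/(\log x)^{A}$ for every $A$ --- and it is here that the hypothesis $Q\le x^{1/2-\delta}$ is used. But the large sieve controls only the characters of conductor exceeding $(\log x)^{B}$; the contribution of the characters of conductor at most $(\log x)^{B}$ must be bounded by showing that the coefficients built from $f$ do not correlate with such characters, and this is exactly the step --- here, and in the residual ranges of the peeling that lie below the large-sieve window yet beyond the reach of Brun--Titchmarsh --- that requires the Siegel--Walfisz hypothesis for $f$. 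The easier residual ranges, where after iterating the peeling the cofactor is $y$-smooth (so $\sum 1/c\ll(\log x)^{\varepsilon}$) or is a single medium prime (so $\sum 1/p\ll\log\log x$), fall to a Brun--Titchmarsh bound of the same shape as in case (I).

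Assembling the pieces, every contribution is $\ll_{A}x/(\log x)^{A}$ except those of case (I) and the Brun--Titchmarsh residues of case (III), which are $\ll x/(\log x)^{1-\varepsilon}$; this proves the corollary. The main obstacle is the organisation of case (III): making sure that, after the combinatorial dissection and dyadic splitting, every surviving bilinear piece either falls inside the large-sieve window, collapses (after iterated peeling) to a ``large prime times very smooth cofactor'' form, or reduces to small-conductor characters controllable by Siegel--Walfisz. It is the absence of a clean Vaughan- or Heath-Brown-type identity for a general $f\in\mathcal{C}$ --- in sharp contrast to $\Lambda$ and $\mu$, where the structure of the Dirichlet series provides one for free --- that makes this bookkeeping delicate, and that has made a general Bombieri--Vinogradov theorem for multiplicative functions so elusive.
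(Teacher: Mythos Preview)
Your treatment of Case~(I) is correct and indeed identifies the genuine source of the $(\log x)^{1-\varepsilon}$ loss: Brun--Titchmarsh applied to $\sum_{p}f(p)$ over $p>z$, summed against the short cofactor $m<y$, gives exactly $x/(\varphi(q)(\log x)^{1-\varepsilon})$, and this is unimprovable for generic $f$.

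The gap is in Case~(III). You assert that the Type~II bilinear sums $\sum_{p^v\sim P}\sum_{k\sim K}\alpha_{p^v}\beta_k(\cdots)$ are $\ll_A x/(\log x)^A$ via the large sieve plus Siegel--Walfisz, but the $1$-Siegel--Walfisz criterion for $f$ does \emph{not} furnish the input the classical bilinear machinery needs. That machinery requires one of the coefficient sequences, say $\beta_k=f(k)\mathbf{1}[P^+(k)\le P]$, to satisfy $|\sum_k\beta_k\bar\chi(k)|\ll K/(\log K)^A$ uniformly for primitive $\chi$ of conductor $r\le(\log x)^B$. Even granting your Buchstab iteration (which does express the smooth-restricted sum in terms of full sums $S_f(Y,\chi)$), the $1$-SW hypothesis only yields $|S_f(Y,\chi)|\le\varphi(r)\max_a|\Delta(f,Y;r,a)|\ll\varphi(r)\,Y/\log Y$, so after iteration $|\hat\beta(\chi)|\ll_\eta\varphi(r)K/\log K$. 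The factor $\varphi(r)$, with $r$ ranging up to the threshold $(\log x)^B$ needed for the large sieve to dispose of the remaining conductors, is fatal: the small-conductor contribution becomes $\gg x(\log x)^{2B-2}$, which is not even $o(x)$ once $B\ge1$. Nor is there an obvious way to transfer SW for $f$ to $\sum_p f(p)\bar\chi(p)$. This is precisely why, as the paper remarks, none of the classical proofs of Bombieri--Vinogradov adapt to general $f\in\mathcal C$.

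The paper's route is entirely different. It does not dissect $n$ by $P^+(n)$ at all. Instead it uses Ramar\'e's weight to produce a bilinear form $\sum_n f(n)F(n)$ with $F$ encoding the arithmetic-progression discrepancy, and bounds the bilinear term (Lemma~\ref{BilinBound}) by a direct combinatorial analysis of $\sum_m F(pm)\overline{F(p'm)}$ --- no characters, no SW. This reduces the problem (Theorem~\ref{MathResult2}, Corollary~\ref{MathResult2Cor}) to the discrepancy $\Delta_\Xi(f,x;q_s,a)$ over $w$-smooth moduli $q_s$ only, and \emph{those} are handled by the Hal\'asz-type structural results of~\cite{GHS} (Corollary~\ref{No Exceptions}, Corollary~\ref{MathResult2Cor**}), which give the $(\log x)^{1-\varepsilon}$ saving for $\Delta_k$ without any SW input. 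The $1$-SW hypothesis enters only at the very last step (Proposition~\ref{Using SW}), to pass from $\Delta_k$ back to $\Delta$ by bounding $\sigma_f(x,\psi)/\varphi(r)$ for the finitely many exceptional $\psi$.
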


This bound is considerably weaker than the hoped-for bound \eqref{eq:BVI} in that we improve upon the ``trivial bound'', $\ll x$, by only a factor $(\log x)^{1-\varepsilon}$ rather than by an arbitrary power of $\log x$. However we will show in Proposition~\ref{LargePrimes2b}  that Corollary \ref{Thm: SW} is, up to the $\varepsilon$-factor, best possible.

%%%%%%%%%%%%%%%%%%%%%%%
 \subsection{Taking exceptional characters into account}
Even if the $1$-Siegel-Walfisz criterion does not hold we can  prove a version of the Bombieri-Vinogradov Theorem which takes account of the primitive characters $\psi$ for which $S_f(x,\psi)$ is large.

For any character $\psi \pmod r$, define
\[
\sigma_f(x,\psi) := \sup_{x^{1/2}< X \leq x} |S_f(X,\psi)/X|.
\]
We order the  primitive characters $\psi_1 \pmod {r_1},\ \psi_2 \pmod {r_2},\ldots $ with each $r_i\leq \log x$, so that 
\[
\sigma_f(x,\psi_1)  \geq \sigma_f(x,\psi_2)  \geq \ldots
\]
which more-or-less corresponds to the ordering of $|S_f(x,\psi)|$, at least if these get ``large''.

Notice that $ \sum_{ {n\leq x,\ (n,q)=1}} f(n) =  S_f(x,\chi_0)$, and so
\begin{equation} \label{expansion}
\Delta(f,x;q,a)=\   \frac 1{\varphi(q)} \sum_{\substack{ \chi \pmod q \\ \chi \ne \chi_0 }}  \chi(a) S_f(x,\chi).
\end{equation}
It therefore makes sense to reformulate the Bombieri-Vinogradov Theorem, so as to remove the largest $k$ value(s) of $|S_f(x,\chi)|$ from the sum in \eqref{expansion}.\footnote{From this perspective, the usual formulation  implicitly assumes that $\psi_1=1$, which is true whenever $f(n)\geq 0$ for all $n$.}  If $|S_f(x,\chi)|$ is large then $\chi \pmod q$ is induced from some $\psi_j$ with $1\leq j\leq k$; and there is a (unique) character $\chi_j \pmod q$  induced by   $\psi_j$, if and only if $r_j|q$. Therefore we  define
 $$
\Delta_k(f,x;q,a):=\sum_{\substack{n\leq x \\ n\equiv a \pmod q}} f(n)  -  \frac 1{\varphi(q)} \sum_{\substack{ 1\leq j\leq k \\ r_j|q}} \chi_j(a) S_f(x,\chi_j).
$$
For fixed $k$, we believe that 
\[
\sum_{q\sim Q} \max_{a:\ (a,q)=1} | \Delta_k(f,x;q,a) | \gg \max_{j\geq k+1}\ \frac 1{\varphi(r_{j})} |S_f(x,\psi_{j})| 
\]
(see the discussion in section \ref{ClaimJusitify}).
Moreover $f$ can be chosen so that this is $\gg x/\log x$ for any fixed $k$ (see section \ref{ManyChar}), unconditionally. Nonetheless this reformulation, taking into account the characters that correlate well with $f$, can lead to upper bounds which are almost of this strength, as we now state.

 \begin{theorem} \label{Cor:Result2} Fix $\delta, \varepsilon>0$ and let $k$ be the largest integer $\leq 1/\varepsilon^2$.
For any   $f\in {\mathcal C} $, and for any  $Q \leq x^{1/2-\delta}$, we have  
\[ 
\sum_{q\sim Q} \max_{a:\ (a,q)=1} | \Delta_k(f,x;q,a) | \ll    \frac{x} {(\log x)^{1-\varepsilon}} .
 \]
\end{theorem}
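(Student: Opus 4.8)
The plan is to pass to character sums and then dissect $S_f(x,\chi)$ according to the multiplicative structure of $f$. By \eqref{expansion} and the definition of $\Delta_k$ one has $\Delta_k(f,x;q,a)=\frac1{\varphi(q)}\sum^{\dagger}_{\chi\bmod q}\chi(a)S_f(x,\chi)$, where $\dagger$ restricts to characters not induced by any $\psi_j$ with $j\le k$ (the principal character causing no trouble: by the ordering, either $1$ is one of these $\psi_j$, or $\sigma_f(x,1)$ is small). Crucially one must \emph{not} pull the absolute value inside the $\chi$-sum -- that would cost a factor $Q$ -- so the bilinear structure of $f$ has to be retained. Write $F=F_{\mathcal S}F_{\mathcal L}$ with $F_{\mathcal S}$ the Euler factor over primes $p\le z$ and $F_{\mathcal L}$ the factor over $p>z$, where $z=x^{\theta}$ for a small $\theta=\theta(\varepsilon,\delta)>0$; then $f=f_{\mathcal S}*f_{\mathcal L}$ with $f_{\mathcal S},f_{\mathcal L}\in\mathcal C$, supported on $z$-smooth and $z$-rough integers respectively. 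Expanding $S_f(x,\chi)=\sum_{ab\le x}f_{\mathcal S}(a)f_{\mathcal L}(b)\bar\chi(ab)$ and cutting $a,b$ dyadically, I would distinguish three regimes.

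(i) Both dyadic blocks inside $[x^{2\delta},x^{1-2\delta}]$: here I apply the multiplicative large sieve to each of the two bilinear factors. Since $Q\le x^{1/2-\delta}$ and $|f_{\mathcal S}|,|f_{\mathcal L}|\le1$, this yields a power-of-$x$ saving, more than enough. (ii) The smooth variable $a$ carries essentially all of $n$ (the rough block is $\le x^{2\delta}$): up to a harmless short divisor sum this is the distribution of the $z$-smooth multiplicative function $f_{\mathcal S}$ in progressions, which I would control using (an adaptation of) the smooth-number Bombieri--Vinogradov theorems of Fouvry--Tenenbaum and Harper -- with $z$ not too large this again saves arbitrary powers of $\log x$. (iii) One block is shorter than $x^{2\delta}$, hence (after further peeling off the largest prime of the rough part) the other exceeds $x^{1-2\delta}$: this is the contribution of $n$ possessing a prime factor $>x^{1-2\delta}$, essentially $n=mp$ with $m$ short and $p$ a large prime. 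This third, degenerate regime is the crux, and where previous proofs of Bombieri--Vinogradov break down.

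For the degenerate regime the large sieve alone is too lossy (one factor is too short), so I would bound the relevant twisted sums $S_{f_{\mathcal L}}(\cdot,\chi)$ by Halász's theorem in pretentious form: since $f_{\mathcal L}$ has no prime factor $\le z$ one has $\min_{|t|\le T}\mathbb D(f_{\mathcal L}\bar\chi,n^{it})^{2}\ge\sum_{p\le z}1/p=(1-o(1))\log\log x$, so Halász gives the bound $\ll x/(\log x)^{1-o(1)}$ \emph{uniformly} in $\chi$; this pointwise bound then has to be married to the large-sieve second moment (which controls how many pairs $(q,\chi)$ contribute a non-negligible amount, and so prevents the loss of a factor $Q$), together with $\sum_{q\sim Q}1/\varphi(q)\ll1$ to absorb the per-modulus saving. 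The low-conductor characters along which $f$ itself pretends are removed precisely by $\Delta_k$; here the arithmetic of $k$ enters: at most $1/\varepsilon^{2}$ primitive characters of conductor $\le\log x$ can have $\min_{|t|\le T}\mathbb D(f,\chi n^{it};x)^{2}<(1-\varepsilon)\log\log x$, by a Cauchy--Schwarz ``repulsion'' estimate using $\mathbb D(\chi_i\bar\chi_j,n^{iu})^{2}\ge(1-o(1))\log\log x$ for distinct low-conductor primitive $\chi_i,\chi_j$ -- which is exactly why $k=\lfloor1/\varepsilon^2\rfloor$; while characters of conductor $>\log x$ divide only $\ll Q/\log x$ of the moduli $q\sim Q$, so contribute $\ll x/\log x$ altogether. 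Combining (i)--(iii) gives the theorem. The hard part is (iii): the honest $(\log x)^{1-\varepsilon}$ saving is forced by the $1/\log x$ term in Halász's theorem (the near-prime contribution) -- and, as Proposition~\ref{LargePrimes2b} shows, cannot be improved -- so the whole difficulty is to keep the pointwise Halász bound, the mean-square large-sieve input, and the removal of the $\le1/\varepsilon^2$ exceptional characters all in play simultaneously without leaking an extra power of $\log x$ or a factor of $Q$.
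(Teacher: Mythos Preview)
Your regime (iii) is where the argument breaks. You correctly note that one must not pull the absolute value inside the $\chi$-sum, yet your treatment of~(iii) does exactly that: the Hal\'asz bound $|S_{f_{\mathcal L}}(X,\chi)|\ll X/(\log x)^{1-o(1)}$ is a pointwise-in-$\chi$ estimate, and to invoke it you have no choice but to bound $\bigl|\tfrac1{\varphi(q)}\sum_\chi^{\dagger}\chi(a)T(\chi)\bigr|$ by $\tfrac1{\varphi(q)}\sum_\chi|T(\chi)|$, which reintroduces the factor $\varphi(q)$ and hence the lost factor $Q$ after summing over $q\sim Q$. The large sieve does not rescue this: it gives only a second-moment bound $\sum_{q,\chi}|T(\chi)|^2\ll(Q^2+X)\sum|a_n|^2$ on the long variable, and there is no mechanism to combine a pointwise $L^\infty$ bound with an $L^2$ bound so as to recover a factor $Q$ on $\sum_\chi|T(\chi)|$. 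The sentence about ``marrying'' the two inputs is precisely the missing idea. (A secondary issue: for regime~(ii) you appeal to Fouvry--Tenenbaum/Harper, but those results are for the smooth-number \emph{indicator}, not for a general $1$-bounded $z$-smooth multiplicative $f_{\mathcal S}$; the latter is itself one of the main results of this paper and cannot be quoted as input.)

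The paper avoids this obstruction by never expanding into characters at the level of all $q\sim Q$. Instead it writes the target as $\sum_{n\le x} f(n)F(n)$ with $F$ the signed AP-indicator of~\eqref{F-value}, and applies Ramar\'e's identity (Proposition~\ref{prop:ramare}) to pull out a single prime $p\in[Y,Z)$ from $n$. After Cauchy--Schwarz the bilinear piece is $\sum_m F(pm)\overline{F(p'm)}$, a sum of products of congruence indicators that is evaluated directly (Lemma~\ref{BilinBound}) --- no characters, no large sieve, so no short-variable loss. The integers with no prime in $[Y,Z)$ form the ``sieve term'', bounded by $\ll x\log\log x/\log x$ via an upper-bound sieve (Lemma~\ref{sievebound}) --- this is the paper's analogue of your regime~(iii), but handled by counting rather than by cancellation. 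The net effect (Theorem~\ref{MathResult2} and Corollary~\ref{MathResult2Cor}) is to reduce the sum over $q\sim Q$ to a sum over the $w$-smooth parts $q_s$ with $w=(\log x)^{O(1)}$. Only now, with poly-logarithmically many moduli, is the Hal\'asz-type input from \cite{GHS} applied pointwise (Corollary~\ref{MathResult2Cor**}); the repulsion argument you describe, bounding the number of exceptional characters by $\lfloor 1/\varepsilon^2\rfloor$, enters here and is correct. In short: your plan tries to use Hal\'asz against $\sim Q$ moduli simultaneously, which cannot work; the paper first collapses to $\ll(\log x)^{O(1)}$ moduli and only then spends Hal\'asz.
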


Corollary \ref{Thm: SW} will follow from Theorem \ref{Cor:Result2}.

 Theorem \ref{Cor:Result2} is close to ``best possible"  in that (as we show in section~\ref{ManyChar}),   for   given integer $k$, there is an  
$ \varepsilon' \asymp \frac 1{\sqrt{k}}$, for which there exists $f\in {\mathcal C} $ such that 
\[ 
\sum_{q\sim Q} \max_{a:\ (a,q)=1} | \Delta_{k}(f,x;q,a) | \gg   \frac{x} {(\log x)^{1-\varepsilon'}} .
\]

In \cite{GHS} it is proved that if $\log q\leq (\log x)^\delta$ where $\delta=\delta(\varepsilon)>0$, then
\begin{equation} \label{eq:GHS}
  | \Delta_k(f,x;q,a) | \ll  \frac 1{\varphi(q)}  \frac{x} {(\log x)^{1-\varepsilon}} 
\end{equation}
whenever $(a,q)=1$, where $k$ is the largest integer $\leq 1/\varepsilon^2$.\footnote{The definition of $\Delta_k(f,x;q,a)$ in \cite{GHS} involves removing the $k$ largest character sums for characters mod $q$, whereas here we use the characters mod $q$ induced from the largest character sums for characters mod $r$, for any $r|q$ with $r\leq \log x$. This is a minor technical difference, which is sorted out in Corollary~\ref {No Exceptions}.}  That is,  $f$ is well-distributed  in all arithmetic progressions mod $q$ for \emph{all} $q\leq Q$ provided $x$ is very large compared to $Q$, and in Theorem \ref{Cor:Result2} we have   shown good distribution in all arithmetic progressions mod $q$ for \emph{almost all} $q\leq Q$ provided $x>Q^{2+\delta}$, a much larger range for $q$  but at the cost of some possible exceptions. This is reminiscent of what we know about the distribution of prime numbers in arithmetic progressions, though here we have significantly weaker bounds on the  error terms.

%%%%%%%%%%%%%%%%%%%%%%%
 \subsection{Limitations on the possible upper bounds}
One might guess that if there are no characters of small conductor that strongly correlate with $f$ then perhaps one can significantly improve the upper bounds in Theorem  \ref{Cor:Result2} and Corollary  \ref{Thm: SW} (if one assumes an $A$-Siegel-Walfisz criterion for all $A>0$). Unfortunately there is another obstruction, in which the values of $f(p)$ for the large primes $p$ up to $x$, do their utmost to block equi-distribution:

 \begin{proposition} \label{LargePrimes2} Let $g$ be a  multiplicative function  with each   $|g(n)|\leq 1$, and suppose we are given $Q\leq x$ with $Q,x/Q\to\infty$ as $x\to \infty$. There exists a subset  $\mathcal P$ of the primes  in the interval $(x/2,x]$, that contains almost all of those primes,\footnote{Here ``almost all of the primes" in $(x/2,x]$ means $(1+o(1))(\pi(x)-\pi(x/2))$ primes.}
 and a constant $\sigma\in \{ -1,1\}$, such that if  $f(n)=g(n)$ for all $n\leq x$ other than   $n\in \mathcal P$, and  $f(p)=\sigma$ for all $p\in \mathcal P$, then $| \Delta(f,x;q,1) |\gg \pi(x)/\varphi(q)$ for  at least half of the moduli  $q\sim Q$. This implies that
\begin{equation}\label{LowerBd}
\sum_{q\sim Q}  | \Delta(f,x;q,1) | \gg  \frac{x} {\log x } . 
\end{equation}
\end{proposition}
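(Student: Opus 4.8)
The plan is to use the freedom in the values of $f$ at the large primes to make $f$ fail to equidistribute in the residue class $1\bmod q$ for \emph{every} $q\sim Q$ at once. Set
\[
\mathcal P:=\{\, p\ \text{prime}\ :\ x/2<p\le x,\ \ q\nmid p-1\ \text{for all}\ q\sim Q\,\}.
\]
The only non-elementary ingredient is that $\mathcal P$ already contains almost all of the primes in $(x/2,x]$: the primes it omits are exactly those $p$ for which $p-1$ has a divisor in $(Q,2Q]$, and a Ford-type ``multiplication table'' estimate for shifted primes gives
\[
\#\{\, p\le x\ :\ p-1\ \text{has a divisor in}\ (Q,2Q]\,\}=o(\pi(x))
\]
as $x\to\infty$ (using $Q\to\infty$). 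Securing this anatomy bound --- and, more to the point, with sufficient uniformity to cover the full range $Q\le x$ with $x/Q\to\infty$ --- is the step I expect to be the main obstacle; the rest is routine.

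Granting this, fix $q\sim Q$. Each $p\in\mathcal P$ is prime and exceeds $x/2>2Q\ge q$ (for $x$ large, since $x/Q\to\infty$), so $(p,q)=1$; and by construction no $p\in\mathcal P$ lies in the class $1\bmod q$. Viewing a candidate function, on $[1,x]$, as $f_\sigma:=\sigma\cdot\mathbf 1_{\mathcal P}+g\cdot\mathbf 1_{[1,x]\setminus\mathcal P}$ with $\sigma\in\{-1,1\}$ yet to be chosen, and using linearity of $\Delta(\,\cdot\,,x;q,a)$ in its function argument,
\[
\Delta(f_\sigma,x;q,1)=\sigma\,\Delta(\mathbf 1_{\mathcal P},x;q,1)+G(q)=-\,\sigma\,\frac{\#\mathcal P}{\varphi(q)}+G(q),
\]
where $G(q):=\Delta\bigl(g\cdot\mathbf 1_{[1,x]\setminus\mathcal P},\,x;q,1\bigr)$ does \emph{not} depend on $\sigma$. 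Write $a_q:=\#\mathcal P/\varphi(q)$; from the anatomy bound, $\#\mathcal P=(1+o(1))(\pi(x)-\pi(x/2))=(1/2+o(1))\pi(x)$, so $a_q\asymp\pi(x)/\varphi(q)$ uniformly for $q\sim Q$.

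It remains to pin down $\sigma$. For a given $q$, writing $G(q)=u+iv$ with $u,v\in\R$ and recalling $a_q\in\R$,
\[
\max\{\,|G(q)-a_q|^2,\ |G(q)+a_q|^2\,\}=\max\{(u-a_q)^2,\,(u+a_q)^2\}+v^2=(|u|+a_q)^2+v^2\ \ge\ a_q^2,
\]
so for each $q$ at least one of $\sigma=+1,\ \sigma=-1$ gives $|\Delta(f_\sigma,x;q,1)|\ge a_q$. Hence one of the two sets $\{q\sim Q:|\Delta(f_{+1},x;q,1)|\ge a_q\}$ and $\{q\sim Q:|\Delta(f_{-1},x;q,1)|\ge a_q\}$ contains at least half of the moduli $q\sim Q$; fixing that value of $\sigma$ (and $\mathcal P$ as above) and putting $f:=f_\sigma$, we obtain $|\Delta(f,x;q,1)|\ge a_q\gg\pi(x)/\varphi(q)$ for at least half of the $q\sim Q$, which is the asserted bound. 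Summing over precisely these $q$, and using $\sum 1/\varphi(q)\ge\sum 1/q\gg1$ for any family of $\gg Q$ moduli in $(Q,2Q]$, gives $\sum_{q\sim Q}|\Delta(f,x;q,1)|\gg\#\mathcal P\gg\pi(x)\gg x/\log x$, i.e.\ \eqref{LowerBd}.

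Finally, when $Q$ is close to $x$ one can bypass the anatomy bound by taking $\mathcal P$ to be \emph{all} primes in $(x/2,x]$ (trivially almost all of them): then $\Delta(f_\sigma,x;q,1)=\sigma\,\mathcal E(q)+G(q)$ with $\mathcal E(q):=\bigl(\pi(x;q,1)-\pi(x/2;q,1)\bigr)-(\pi(x)-\pi(x/2))/\varphi(q)\in\R$, and since $\pi(x;q,1)-\pi(x/2;q,1)$ is a non-negative integer while $(\pi(x)-\pi(x/2))/\varphi(q)\le 1/3$ once $Q\gg x\log\log x/\log x$, we get $|\mathcal E(q)|\ge(\pi(x)-\pi(x/2))/\varphi(q)\gg\pi(x)/\varphi(q)$ for \emph{every} $q\sim Q$; the same sign dichotomy (with $|\mathcal E(q)|$ replacing $a_q$) then finishes the argument in that range.
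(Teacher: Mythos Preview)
Your proof is correct and essentially identical to the paper's: the same set $\mathcal P$, the same use of linearity to isolate a real term $\pm\#\mathcal P/\varphi(q)$, and the same pigeonhole on $\sigma\in\{-1,1\}$. The anatomy bound you flag as the main obstacle is exactly what the paper invokes, citing Theorem~6 and Corollary~2 of Ford~\cite{For} for the required estimate on shifted primes with a divisor in $(Q,2Q]$; your additional remark covering $Q$ close to $x$ directly is a nice touch not in the paper.
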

 
It is widely believed that for any fixed $\ee, A>0$ we have
\begin{equation} \label{ExtendPNT}
\pi(x;q,a) = \frac{\pi(x)}{\varphi(q)} \left( 1 +O\left(  \frac 1{(\log x)^A} \right) \right)
\end{equation}
 whenever $(a,q)=1$ and $q\leq x^{1 -\varepsilon}$.

We now show that even assuming a strong Siegel-Walfisz criterion, we expect that one cannot significantly improve 
the upper bound in Corollary \ref{Thm: SW}.

\begin{proposition} \label{LargePrimes2b} Assume \eqref{ExtendPNT}. Let $x^{2/5} < Q < x/2$. There exists a completely multiplicative function $f$, taking only values $-1$ and $1$, which satisfies the $A$-Siegel-Walfisz criterion for all $A>0$, but for which \eqref{LowerBd} holds.
 \end{proposition}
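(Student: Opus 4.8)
The plan is to deduce Proposition~\ref{LargePrimes2b} from Proposition~\ref{LargePrimes2}, the work being to check that the function it hands us retains the Siegel--Walfisz property. First I would fix a completely multiplicative, $\pm1$-valued function $g$ satisfying the $A$-Siegel--Walfisz criterion for every $A$ — a suitable twist of the Liouville function being the natural candidate — and apply Proposition~\ref{LargePrimes2} with this $g$ and the given $Q$. That produces a set $\mathcal P$ consisting of all but a negligible fraction of the primes in $(x/2,x]$ (write $R$ for the set of all primes in $(x/2,x]$), a sign $\sigma\in\{-1,1\}$, and the function $f$ equal to $g$ on $[1,x]\setminus\mathcal P$ and equal to $\sigma$ on $\mathcal P$. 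I would then \emph{extend} $f$ to a completely multiplicative function by keeping those prime values; since every prime of $\mathcal P$ exceeds $x/2$, no product of two of them is $\le x$, so this extension changes no value $f(n)$ with $n\le x$ outside $\mathcal P$, and the hypotheses of Proposition~\ref{LargePrimes2} are still met. Hence $f$ is completely multiplicative, takes only the values $\pm1$, and \eqref{LowerBd} holds for it.

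The remaining — and substantive — task is to verify the $A$-Siegel--Walfisz criterion for $f$, for every $A$; this is where \eqref{ExtendPNT} enters. I would write $f=g*h$, where $h$ is the multiplicative function with $\sum_n h(n)n^{-s}=\prod_{p\in\mathcal P}\frac{1-g(p)p^{-s}}{1-\sigma p^{-s}}$, so that $h$ is supported on integers all of whose prime factors lie in $\mathcal P\subseteq(x/2,x]$, $|h(p^k)|\le 2$, and — since $\mathcal P$ is finite — this is a finite Euler product, analytic for $\mathrm{Re}(s)>0$, giving $\sum_m |h(m)|/m\ll 1$ and $\sum_{m\le T}|h(m)|\ll(\log T)^{O(1)}$. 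Since $q<x/2$ forces $(m,q)=1$ for every $m$ in the support of $h$ (the remaining case, $q$ divisible by a prime $>x/2$, being disposed of by trivial bounds), one gets
\[
\Delta(f,y;q,a)\;=\;\sum_{m}h(m)\,\Delta\!\left(g,\tfrac{y}{m};q,a\bar m\right).
\]
The $m=1$ term is $\Delta(g,y;q,a)\ll_A y/(\log y)^A$ by Siegel--Walfisz for $g$. Every other $m$ in the support has $m\ge x/2$, hence $y/m<2y/x$; once $y$ exceeds a fixed power of $x$, crude bounds together with $\sum_m|h(m)|/m\ll 1$ and the polylogarithmic growth of $\sum_m|h(m)|$ handle these terms. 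The delicate regime is $y$ comparable to $x$ (or a small power of $x$): there the $m>1$ terms collapse to a sum over the primes $p\in\mathcal P\cap(x/2,\min(y,x)]$, i.e. essentially to $\Delta(\mathbf 1_{\mathcal P\cap(x/2,\min(y,x)]},x;q,a)$, which for $q\le x^{1-\varepsilon}$ is $\ll_A \pi(x)/(\varphi(q)(\log x)^A)+|R\setminus\mathcal P|$ by \eqref{ExtendPNT}, and for $q>x^{1-\varepsilon}$ is $\ll x^{3/5}$; in both cases this is $\ll_A y/(\log y)^A$, \emph{provided} $|R\setminus\mathcal P|\ll_A x/(\log x)^A$.

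The hard part will be exactly this Siegel--Walfisz verification: it must hold for every $A$, every $y\ge 2$, and every modulus $q$ (not merely $q\sim Q$), and the difficulty is concentrated where $y\asymp x$, since there the primes of $\mathcal P$ on which $f$ disagrees with $g$ inject a main-term-sized quantity into $\Delta(f,y;q,a)$ that one must show is negligible. This forces the \emph{full} strength of \eqref{ExtendPNT} — equidistribution of primes to \emph{all} moduli $q\le x^{1-\varepsilon}$, not just the polylogarithmic ones covered by classical Siegel--Walfisz — and it forces $R\setminus\mathcal P$ to have size $\ll_A x/(\log x)^A$ for every $A$, so part of the work will be to inspect the construction underlying Proposition~\ref{LargePrimes2} (or to re-run it) so that it produces a $\mathcal P$ that small. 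A secondary nuisance is uniformity over arbitrarily large $y$ and over all $q$, handled by the finiteness of the Euler product for $h$ and trivial bounds once $y$ is past any fixed power of $x$, together with the usual care at small moduli, where one again appeals to equidistribution of primes to control $\Delta(\mathbf 1_{\mathcal P\cap(x/2,\min(y,x)]},x;q,a)$.
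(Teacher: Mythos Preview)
Your plan has a genuine gap at exactly the point you flag as ``the hard part'': the requirement $|R\setminus\mathcal P|\ll_A x/(\log x)^A$ for every $A$. This is simply false for the set $\mathcal P$ produced by Proposition~\ref{LargePrimes2}. In that construction $R\setminus\mathcal P$ consists of the primes $p\in(x/2,x]$ for which $p-1$ has a divisor in $(Q,2Q]$; by Ford's work the number of such primes is $o(\pi(x))$, but the saving is only a small fixed power of $\log x$, not an arbitrary one. Nor can one ``re-run'' Proposition~\ref{LargePrimes2} to force $|R\setminus\mathcal P|$ that small: the definition of $\mathcal P$ there is rigid, and any set of primes $p$ defined by a divisor condition on $p-1$ over \emph{all} integers $q\sim Q$ will have a complement of this size. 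The alternative --- proving equidistribution of $R\setminus\mathcal P$ in progressions directly via inclusion--exclusion over the divisors $q\sim Q$ of $p-1$ --- fails too, because an integer $p-1$ can have many divisors in $(Q,2Q]$, so the inclusion--exclusion has unboundedly many levels and the moduli at high levels escape the range of \eqref{ExtendPNT}.

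The paper avoids Proposition~\ref{LargePrimes2} entirely and builds $f$ from scratch. One takes $g=1$ and lets $\mathcal P$ be the (small, not ``almost all'') set of primes $p\in(x/2,x]$ such that $p-1$ is divisible by some \emph{prime} $\ell$ in a fixed interval $I=(x^{1/3},x^{2/5}]$; then $f(p)=-1$ on $\mathcal P$ and $+1$ elsewhere. Because at most two primes from $I$ can divide $p-1$, the inclusion--exclusion for $\Delta(\mathbf 1_{\mathcal P},\cdot;m,a)$ has exactly two levels, the moduli stay below $x^{1-\varepsilon}$, and \eqref{ExtendPNT} yields Siegel--Walfisz directly. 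For the lower bound one observes that any $q\sim Q$ divisible by a prime $\ell\in I$ forces every $p\equiv 1\pmod q$ into $\mathcal P$, so $\Delta(\mathbf 1_{\mathcal P},x;q,1)\asymp \pi(x)/\varphi(q)$; summing over such $q$ gives \eqref{LowerBd}. The point is that the \emph{prime}-divisor condition, rather than an arbitrary-divisor condition, is what makes Siegel--Walfisz verifiable; reducing to Proposition~\ref{LargePrimes2} discards precisely this structure.
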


We have exhibited two fundamental obstructions to a Bombieri-Vinogradov Theorem for multiplicative functions:

 (i)\ If $f$ correlates closely with a character of small conductor; or
 
 (ii)\ If the values of $f(p)$ with $x/2<p\leq x$ conspire against equi-distribution.
 
 \noindent Consequently, although we have been able to beat the ``trivial bound'' by a factor of $(\log x)^{1-\varepsilon}$ by taking (i) into account in Theorem \ref{Cor:Result2}, (ii) ensures that we cannot do much better in general.
 
  These two obstructions have arisen  before in the multiplicative functions literature, in Montgomery and Vaughan's seminal work \cite{MV}  on bounding exponential sums twisted by multiplicative coefficients. For this question, the contributions from obstruction (i) are  identified precisely in  \cite{BGS}, but the sharpness of the bounds are inevitably restricted by obstruction (ii). However, in Proposition 1 of \cite{dlB}, de la Bret\`eche showed that one can obtain much better bounds if one restricts attention to $f$ that are supported only on smooth numbers,\footnote{\emph{Smooth numbers} are  integers with no large prime factors.  That is, we restrict attention to multiplicative function $f(.)$ for which $f(p^k)=0$ for any prime $p>y$ and integer $k\geq 1$.}   since then obstruction (ii) is rendered irrelevant. We can do much the same here:

 %%%%%%%%%%%%%%%%%%%%%%%%%%%%%%
 \subsection{Multiplicative functions supported on smooth numbers} The following key result generalizes
 \eqref{eq:GHS} (which was proved in~\cite{GHS}) to error terms in which one ``saves'' an arbitrary power of $\log x$, for multiplicative functions supported on smooth numbers.
 
 Given a finite set of primitive characters, $\Xi$,   let $\Xi_q$ be the set of characters mod $q$ that are induced by the characters in $\Xi$, and then define  
 \[
\Delta_\Xi(f,x;q,a):=  \sum_{\substack{n\leq x \\ n\equiv a \pmod q}} f(n) -   \frac 1{\varphi(q)} \sum_{ \substack{  \chi  \in   \Xi_q }} \chi(a) S_f(x,\chi) .
\]

\begin{proposition} \label{MainCor} Fix $\varepsilon>0$  and $0<\gamma\leq \frac 12-\varepsilon$, let  
$y=  x^{\gamma}$, and 
suppose that $f\in \mathcal C$, and is only supported on $y$-smooth numbers. Fix $B\geq 0$. 
There exists a set, $\Xi$, of primitive characters $\psi \pmod r$ with each $r\leq R :=  x^{  \varepsilon/(3\log\log x)}$, containing $\ll   (\log x)^{6B+7+ o(1)}$ elements, such that  if $q\leq R$ and $(a,q)=1$ then
\[    
| \Delta_{\Xi}(f,x;q,a) | \ll  \frac 1{\varphi(q)}  \frac{x} {(\log x)^{B}} .
\]
\end{proposition}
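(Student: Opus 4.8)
The plan is to open $\Delta_\Xi$ by orthogonality of characters, let $\Xi$ collect the primitive characters of conductor $\le R$ that account for the bulk of the resulting character–sum mass, and then prove --- exploiting that $f$ lives on $y$-smooth numbers with $y\le x^{1/2-\varepsilon}$, so that the ``large prime'' obstruction~(ii) of the introduction is absent --- an $L^1$-type bound over the remaining characters strong enough to absorb the factor $\varphi(q)\le R$ that the orthogonality step costs. This is in the spirit of de la Br\'eteche's treatment \cite{dlB} of exponential sums over smooth numbers, and generalises the pointwise estimate \eqref{eq:GHS} of \cite{GHS}.

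From \eqref{expansion} and the definition of $\Delta_\Xi$ one has
\[
\varphi(q)\,\Delta_\Xi(f,x;q,a)=\sum_{\substack{\chi\ (\mathrm{mod}\ q)\\ \chi\notin\Xi_q}}\overline{\chi}(a)\,S_f(x,\chi),
\]
so the proposition reduces to $\sum_{\chi\ (\mathrm{mod}\ q),\ \chi\notin\Xi_q}|S_f(x,\chi)|\ll x/(\log x)^B$, uniformly for $q\le R$ and $(a,q)=1$. Writing $\chi$ as induced by the primitive $\psi\ (\mathrm{mod}\ r)$ with $r\mid q$, one has $S_f(x,\chi)=S_{f_q}(x,\psi)$ with $f_q$ the function $f$ stripped of its Euler factors at the $\ll\log x$ primes dividing $q$; $f_q$ is again a $y$-smooth member of $\mathcal C$, and this surgery alters sums by at most a factor $\prod_{p\mid q}(1-1/p)^{-1}\ll\log\log x$, which is harmless. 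Since the characters mod $q$ are in bijection with the primitive characters of conductor dividing $q$, it therefore suffices to prove, for every $y$-smooth $h\in\mathcal C$,
\[
\sum_{r\le R}\ \sum^{*}_{\substack{\psi\ (\mathrm{mod}\ r)\\ \psi\notin\Xi}}\bigl|S_h(x,\psi)\bigr|\ \ll\ \frac{x}{(\log x)^{B}},
\]
where $\sum^{*}$ is over primitive characters and $\Xi$ is the set to be constructed.

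For $\Xi$ I would take --- echoing the ordering by $\sigma_f$ used earlier in the paper --- the primitive characters $\psi$ of conductor $\le R$ for which $|S_f(X,\psi)|\ge X(\log X)^{-(3B+3)}$ for some $X$ in a dyadic block with $R^2<X\le x$. Decomposing the range of $X$ dyadically and applying, on each block, the multiplicative large sieve with a supremum over the upper limit of summation (Gallagher's device) --- lossless here because $R^2\le X$ --- gives $\sum_{r\le R}\sum^{*}_{\psi\ (\mathrm{mod}\ r)}\sup_{X\asymp X_0}\bigl|\sum_{n\le X}f(n)\overline{\psi}(n)\bigr|^2\ll X_0^{2}(\log x)^{1+o(1)}$ for each block, whence $|\Xi|\ll(\log x)^{6B+7+o(1)}$: the exponent is $2(3B+3)$ from the threshold, plus $1$ from the $\ll\log x$ blocks, plus $o(1)$ from the large sieve and the sup-device. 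By construction every $\psi\notin\Xi$ has $|S_f(X,\psi)|<X(\log X)^{-(3B+3)}$ --- and likewise, up to the factor $\log\log x$, $|S_h(X,\psi)|$ for the deformed functions $h=f_q$ --- for all $R^2<X\le x$; that is, a fixed power-of-$\log$ saving at every scale.

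The heart of the matter, and the step I expect to be the main obstacle, is to upgrade this fixed power-of-$\log$ saving into the displayed $L^1$ bound, in which --- since there may be up to $\varphi(q)\le R=\exp(\tfrac{\varepsilon}{3}\tfrac{\log x}{\log\log x})$ characters --- what is really needed is a per-character saving of a factor comparable to $R$ for all but $O((\log x)^{O(B)})$ of the characters. The point is that $h$ is supported on $y$-smooth numbers with $y=x^\gamma$, $\gamma\le\tfrac12-\varepsilon$: every $n\in(y,x]$ in the support has a divisor in a short range, and iterating this factorisation --- exactly as in \cite{dlB}, where it removes the large-prime obstruction for exponential sums --- represents $S_h(X,\psi)$, uniformly in $\psi$, through multilinear forms. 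In each of these, either a genuine cancellation is available, because the twisted prime sums $\sum_{p\le z}h(p)\overline{\psi}(p)\log p$ are small --- which is precisely the information extracted from the recursion $h\log=h\ast\Lambda_h$ of \cite{GHS}, and whose failure for general $f$ is the large-prime obstruction --- or one is reduced to summing over far fewer characters. Running this down through the $\asymp\log x/\log\log x$ scales between $R^2$ and $x$ and, at each scale, subtracting the contribution of the characters gathered in $\Xi$, the per-scale savings should compound so as to beat the count $R$ of characters by a power of $\log x$ to spare, giving the claimed bound on $|\Delta_\Xi(f,x;q,a)|$. The difficulty is to arrange that these savings genuinely accumulate rather than saturating after $O(1/\gamma)$ levels, and to keep the threshold defining $\Xi$ (hence the exponent $6B+7$) compatible with the large-sieve count; reconciling the primitive-versus-imprimitive bookkeeping (the distinction between $\Xi$ and $\Xi_q$, as in the footnote to \eqref{eq:GHS}) and the Euler-factor surgery are further, more routine, points.
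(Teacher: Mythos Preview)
Your framework is right --- open $\Delta_\Xi$ by orthogonality, define $\Xi$ by a threshold on $|S_f(X,\psi)|$, and bound $|\Xi|$ with the large sieve; all of this matches the paper (which takes $\Xi=\Xi(2B+2+\varepsilon)$, the primitive $\psi$ of conductor $\le R$ with $|S_f(X,\psi)|\ge \Psi(X,y)/(u\log u)^4(\log x)^{2B+2+\varepsilon}$ for some $x^\eta<X\le x$, and counts them by the large sieve to get $\ll(\log x)^{6B+7+o(1)}$ when $u$ is bounded). The point where your proposal stalls is exactly the one you flag: upgrading the per-character saving $(\log x)^{-(3B+3)}$ to an $L^1$ bound $\sum_{\chi\notin\Xi_q}|S_f(x,\chi)|\ll x/(\log x)^B$ over as many as $\varphi(q)\le R=\exp(\tfrac{\varepsilon}{3}\log x/\log\log x)$ characters. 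A de~la~Br\'eteche-style iteration does \emph{not} obviously supply a saving that compounds past $O(1/\gamma)$ levels, and the pointwise threshold alone gives only $R\cdot x(\log x)^{-(3B+3)}\gg x$. So as written there is a genuine gap.

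The paper closes this gap by a quite different and more explicit mechanism. It first restricts to $f$ supported on primes in $(q^2,y]$ (the reduction to and from this case is your ``Euler-factor surgery'', carried out carefully), then applies \emph{Harper's identity}
\[
f(n)\log n=\Lambda_f(n)+\int_0^\infty n^{-\beta}\sum_{abm=n}\Lambda_f(a)a^\beta\Lambda_f(b)f(m)\,d\beta
\]
and Perron's formula to write $\sum_{x<n\le 2x}f(n)\overline{\chi}(n)\log n$ as an integral of a trilinear form in $\chi$ with two $\Lambda_f$-factors and one $f$-factor. The factor of $\varphi(q)$ is then absorbed not by iteration but by \emph{orthogonality}: after Cauchy--Schwarz, the sums $\tfrac{1}{\varphi(q)}\sum_{\chi\bmod q}\bigl|\sum_{a\le y}\Lambda_f(a)\overline{\chi}(a)a^{-s}\bigr|^2$ collapse to diagonals $\sum_{a\equiv b\,(q)}$, which Brun--Titchmarsh handles since all primes in play exceed $q^2$. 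The definition of $\Xi$ is used only to bound the third factor $\bigl|\sum_m f(m)\overline{\chi}(m)m^{-s}\bigr|$ for $\chi\notin\Xi_q$. In short: the saving of $R$ comes from character orthogonality inside the bilinear $\Lambda_f$-sums, not from accumulating small per-scale gains in $S_f$. This is the missing idea in your sketch.
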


By  \eqref{eq:GHS} we know that if $B<1$ then Proposition \ref{MainCor} holds for any $f\in \mathcal C$ (not just those 
supported on $y$-smooth numbers) with the size of $|\Xi|$ bounded only in terms of $B$.

We state two Bombieri-Vinogradov type Theorems that follow from this.

 \begin{theorem} \label{Keep Xi} 
Fix $0< \varepsilon\leq \frac 1{10}, A, B$ with $B>0$ and $2A>6B+7$. Let  $y=x^\varepsilon$. 
Let $f\in \mathcal C$ be a   multiplicative function which is only supported on $y$-smooth integers. There exists a set, $\Xi$, of primitive characters, containing $\ll   (\log x)^{6B+7+ o(1)}$ elements, such that for any $Q\leq x^{1/2}/y^{1/2}(\log x)^A$, we have
 \begin{equation}\label{eq:boum5}
 \sum_{q \sim Q} \max_{(a,q)=1} \left|  \Delta_{\Xi}(f,x;q,a)  \right|  \ll_B \frac x{(\log x)^B}.
 \end{equation}
\end{theorem}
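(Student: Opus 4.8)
The plan is to combine Proposition \ref{MainCor}, which governs the small moduli, with a large--sieve argument exploiting the $y$-smoothness of $f$ for the remaining moduli. Write $R=x^{\varepsilon/(3\log\log x)}$ as in Proposition \ref{MainCor}, and take $\Xi$ to be the set of primitive characters that proposition produces when applied with its parameter $B$ replaced by a slightly larger $B'=B+o(1)$, chosen so that $(\log x)^{B'}$ exceeds $(\log x)^{B}$ by a fixed power of $\log\log x$; then $|\Xi|\ll(\log x)^{6B'+7+o(1)}=(\log x)^{6B+7+o(1)}$. Starting from $\Delta_\Xi(f,x;q,a)=\varphi(q)^{-1}\sum_{\chi\pmod q,\ \chi\notin\Xi_q}\chi(a)S_f(x,\chi)$, I would bound $\sum_{q\sim Q}\max_a|\Delta_\Xi|$ by $\sum_{q\sim Q}\varphi(q)^{-1}\sum_{\chi\pmod q,\ \chi\notin\Xi_q}|S_f(x,\chi)|$, pass from each $\chi$ to the primitive character $\psi\pmod r$ that induces it --- replacing the condition $(n,q)=1$ built into $\chi$ by $(n,r)=1$ through an inclusion--exclusion over the prime divisors of $q$ that are $\le y$ (the only ones relevant to the support of $f$), affordable once one discards the negligibly few $q$ with an atypically large such part --- and decompose the cutoff $n\le x$ smoothly. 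This reduces the theorem to an estimate of the shape
\[
\sum_{r\le 2Q}\frac{(\log x)^{o(1)}}{\varphi(r)}\ \sum_{\psi\bmod r}^{*}\1_{\psi\notin\Xi}\ \max_{X\le x}|S_f(X,\psi)|\ \ll\ \frac{x}{(\log x)^{B}}.
\]

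Next I would split the $r$-range at $R$. For $r\le R$ one invokes Proposition \ref{MainCor}: applied directly to the moduli $q\le R$ it controls that part of $\sum_{q\sim Q}\max_a|\Delta_\Xi|$ up to a logarithmic loss absorbed by the choice of $B'$, and applied with modulus $r$ together with orthogonality of the characters mod $r$ it gives $\max_{X\le x}|S_f(X,\psi)|\ll x/(\log x)^{B'}$ for every primitive $\psi\notin\Xi$ of conductor $\le R$. The substantive case is $r>R$, where $\Xi$ is of no help. Here one uses that $f$ is supported on $y$-smooth integers: apart from a negligible set of $n$ dominated by a single large prime power, every $y$-smooth $n\le x$ admits a factorisation $n=de$ with $d$ in a prescribed interval of multiplicative width $\ll y$; hence, after the standard bookkeeping with coprimality and with the constraint $de\le x$, the sum $S_f(x,\psi)$ becomes a bilinear form $\sum_d\sum_e\gamma_d\delta_e\,\bar\psi(de)$ with $\|\gamma\|_2\,\|\delta\|_2\ll(xy)^{1/2}(\log x)^{O(1)}$. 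Feeding this into the multiplicative large--sieve inequality and summing over $r$ and over the $q\sim Q$ with $r\mid q$ yields a bound $\ll x/(\log x)^{B}$ precisely when $Q\le x^{1/2}y^{-1/2}(\log x)^{-A}$: the loss of the factor $y^{1/2}$ is exactly the price of the multiplicative width $y$ of the interval containing $d$, and the hypothesis $2A>6B+7$ provides just the room needed to extract a power of $\log x$ and to accommodate the accumulated $(\log x)^{O(1)}$ factors.

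The principal obstacle is the intermediate band of conductors $R<r\lesssim y(\log x)^{O(B)}$. There the single--interval bilinear form above is too weak --- its large--sieve bound degrades roughly like $xy/r$, which for $r$ barely above $R=x^{\varepsilon/(3\log\log x)}$ is larger than $x$ --- while Proposition \ref{MainCor} does not reach conductors of that size, and $\Xi$ cannot be enlarged to cover them without destroying the bound $|\Xi|\ll(\log x)^{6B+7+o(1)}$. Overcoming it requires the finer, well-factorable structure of smooth numbers: a $y$-smooth integer can be broken into several factors with independently prescribed sizes, so one can iterate the Cauchy--Schwarz/large--sieve step and spread the $y$-loss across the factorisation, closing the estimate uniformly for all $r>R$. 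This multilinear input --- an analogue here of Iwaniec's well-factorable large sieve, in the spirit of Fouvry--Tenenbaum and of de la Br\'eteche's treatment of exponential sums over smooth numbers --- is the real technical core. What remains is the exacting but routine job of keeping every logarithmic loss (the passage to primitive characters, the $(n,q)=1$ sieving, the smoothing of the cutoff, the divisor bounds on $\gamma$ and $\delta$) within the allotted budget, so that $|\Xi|$ stays $\ll(\log x)^{6B+7+o(1)}$ and the saving remains the full $(\log x)^{B}$.
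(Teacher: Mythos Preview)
Your approach is genuinely different from the paper's, and the obstacle you flag in your last paragraph is real: within your framework the intermediate conductors $R<r\lesssim y(\log x)^{O(B)}$ are not handled, and the well-factorable large-sieve argument you gesture at is not carried out. As written, the proposal is an outline with an explicitly acknowledged gap at its technical core.

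The paper bypasses this difficulty entirely. Rather than expanding $\Delta_\Xi$ into characters and attacking $\sum_r\sum_\psi^*|S_f(X,\psi)|$ directly, it uses the Ramar\'e-weight machinery (Proposition~\ref{prop:ramare}) applied to the function $F$ of \eqref{F-value}, exactly as in the proof of Theorem~\ref{MathResult2}, but with parameters $Y=\exp((\log\log x)^2)$, $Z=y$, $w=(\log x)^{2A}$. The bilinear term is controlled by Lemma~\ref{BilinBound}; this is where the restriction $Q\le x^{1/2}/y^{1/2}(\log x)^A$ enters, via the $QZ^{1/2}x^{1/2}$ term. The sieve term $E_{\text{sieve}}$ is where the $y$-smoothness of $f$ is used: since $f$ vanishes on integers with a prime factor in $[Y,Z)$, one gets $E_{\text{sieve}}\le\sum_n|F(n)|\1_{(n,\prod_{p>Y}p)=1}$, which is bounded by counting $Y$-smooth numbers in progressions via Fouvry--Tenenbaum's Bombieri--Vinogradov for smooth numbers \eqref{BVsmooth}. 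This yields the analogue of Theorem~\ref{MathResult2}, and then Corollary~\ref{MathResult2Cor} reduces the whole sum to $\sum_{q_s\le 2Q,\ P(q_s)\le w}\max_a|\Delta_\Xi(f,x;q_s,a)|$. Since $w=(\log x)^{2A}$, every relevant $q_s$ is either $\le R$ (where Proposition~\ref{MainCor} applies) or is a $w$-smooth integer $>R$ (where the trivial bound $|\Delta_\Xi|\ll(1+|\Xi_q|)x/q$ together with smooth-number tail estimates suffices). No characters of conductor in the awkward intermediate range ever appear.

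In short: the paper's route trades your ``factor each smooth $n$ and large-sieve over all conductors'' for ``Ramar\'e-decompose the sum, so that only the $w$-smooth part of each modulus matters''; this collapses the conductor range to where Proposition~\ref{MainCor} already reaches, and the bilinear input needed is just Lemma~\ref{BilinBound} rather than any multilinear or well-factorable refinement.
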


\begin{corollary} \label{MathResult3}
Fix $0 < \varepsilon \leq \frac{1}{10}$. Let $y = x^{\varepsilon}$.
Let $f\in \mathcal C$ be a   multiplicative function which is only supported on $y$-smooth integers.
Assume that the Siegel-Walfisz criterion holds for $f$.
For any given $B>0$ there exists $A$ such that  for any $Q\leq x^{1/2}/(y^{1/2}(\log x)^A)$, we have
\[ \sum_{q \sim Q} \max_{(a,q)=1} \left|  \Delta(f,x;q,a)  \right|    \ll_B \frac x{(\log x)^B}. \]
\end{corollary}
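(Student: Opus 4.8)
The plan is to deduce Corollary~\ref{MathResult3} from Theorem~\ref{Keep Xi}. Given $B>0$, pick any integer $A$ with $2A>6B+7$ (for instance $A=3B+4$) and apply Theorem~\ref{Keep Xi} with these $A,B$: this furnishes a set $\Xi$ of $\ll(\log x)^{6B+7+o(1)}$ primitive characters with
\[
\sum_{q\sim Q}\max_{(a,q)=1}|\Delta_\Xi(f,x;q,a)|\ll\frac{x}{(\log x)^B}\qquad\bigl(Q\leq x^{1/2}/y^{1/2}(\log x)^A\bigr).
\]
One may assume the trivial character lies in $\Xi$: adjoining it alters each $\Delta_\Xi(f,x;q,a)$ by $\varphi(q)^{-1}S_f(x,\chi_0)$, which is the average of $\Delta_\Xi(f,x;q,\cdot)$ over the residues coprime to $q$, so this is harmless. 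Then
\[
\Delta(f,x;q,a)-\Delta_\Xi(f,x;q,a)=\frac1{\varphi(q)}\sum_{\substack{\chi\in\Xi_q\\ \chi\neq\chi_0}}\chi(a)\,S_f(x,\chi),
\]
and Corollary~\ref{MathResult3} follows once we bound $\sum_{q\sim Q}\varphi(q)^{-1}\sum_{\chi\in\Xi_q,\,\chi\neq\chi_0}|S_f(x,\chi)|$ by $x/(\log x)^B$ using the Siegel-Walfisz hypothesis.

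I would group each $\chi\in\Xi_q$ with the primitive character $\psi\pmod r$ inducing it (so $\chi$ exists exactly when $r\mid q$), and split $\Xi$ according to whether $r\leq w$ or $r>w$, with $w=(\log x)^D$ and $D=D(B)$ large. For $\psi$ of large conductor $r>w$ only the trivial bound $|S_f(x,\chi)|\leq x$ is needed: since $\sum_{q\sim Q,\ r\mid q}\varphi(q)^{-1}\ll\varphi(r)^{-1}\ll(\log x)^{-D+o(1)}$ and $|\Xi|\ll(\log x)^{6B+7+o(1)}$, the total contribution of these characters is $\ll x(\log x)^{6B+7-D+o(1)}$, which is $\ll x/(\log x)^B$ once $D$ is large enough.

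For $\psi$ of small conductor $r\leq w$ the Siegel-Walfisz hypothesis controls the \emph{primitive} sum via $S_f(X,\psi)=\sum_{(b,r)=1}\psi(b)\Delta(f,X;r,b)$, giving $|S_f(X,\psi)|\ll_E\varphi(r)X/(\log X)^E$ for every $E$ and all $X\geq2$; however $\Xi_q$ contains the \emph{imprimitive} character mod $q$ induced by $\psi$, and $\sum_{n\leq x,\,(n,q)=1}f(n)\psi(n)$ differs from $S_f(x,\psi)$ by the extra $n\leq x$ with $(n,r)=1$ but $(n,q)>1$. To handle this I would first reduce to completely multiplicative $f$ by writing $f=f_0*h$ with $f_0$ completely multiplicative, $f_0(p)=f(p)$, and $h$ (and its Dirichlet inverse) supported on squarefull numbers with coefficients $\ll d(b)$ — which follows from $f,f^{-1}\in\mathcal C$; then $f_0$ still satisfies the above Siegel-Walfisz bound, and it suffices to treat $S_{f_0}$. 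For completely multiplicative $f_0$ one has $\sum_{n\leq X,\,d\mid n}f_0(n)\psi(n)=f_0(d)\psi(d)S_{f_0}(X/d,\psi)$, so opening the condition $(n,q)=1$ by M\"obius over $d\mid q$ and using the Siegel-Walfisz bound at the scales $X/d$ shows the discrepancy is $\ll_E\varphi(r)X\log\log X/(\log X)^E$. Tracking this through $f=f_0*h$ (and bounding the negligible tail $b>x^{1/10}$ trivially) yields $|\sum_{n\leq x,\,(n,q)=1}f(n)\psi(n)|\ll_E w\,x/(\log x)^{E-1}$ uniformly in $q\sim Q$; summing over $q\sim Q$ with $r\mid q$ (a factor $\ll\varphi(r)^{-1}$) and over the $\ll(\log x)^{6B+7+o(1)}$ small-conductor $\psi\in\Xi$ bounds this part by $\ll_E x(\log x)^{D+6B+8-E}$, which is $\ll x/(\log x)^B$ for $E=E(B)$ large. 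Combining the two ranges of $r$ with the bound from Theorem~\ref{Keep Xi} completes the argument.

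The main obstacle is precisely this passage from the primitive sums $S_f(x,\psi)$, which Siegel-Walfisz controls directly, to the imprimitive sums attached to the characters of $\Xi_q$: for a single modulus $q$ the discrepancy between the two can be as large as $x\log\log x$, so a crude estimate is useless, and one really needs the reduction to the completely multiplicative case — where the extra integers factor cleanly as $d\cdot m$ — to keep it negligible on average over $q\sim Q$. (This is the same phenomenon resolved for small moduli in Corollary~\ref{No Exceptions}.) I also note that if the $\Xi$ produced by Theorem~\ref{Keep Xi} can be taken to consist only of characters $\psi$ with $\sigma_f(x,\psi)$ above a fixed threshold, then under the Siegel-Walfisz hypothesis $\Xi$ contains no character of conductor $\leq(\log x)^{O(1)}$, the small-conductor case becomes vacuous, and only the elementary sparsity argument of the second paragraph is needed. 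Choosing $D$, then $E$, then $A$ as explicit functions of $B$ is then routine bookkeeping.
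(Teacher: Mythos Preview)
Your proposal is correct and follows the same overall plan as the paper: apply Theorem~\ref{Keep Xi}, then use the Siegel--Walfisz hypothesis to pass from $\Delta_\Xi$ to $\Delta$. The difference is only in how that second step is carried out. The paper simply quotes Proposition~\ref{Using SW}, whose proof handles the primitive-versus-imprimitive comparison via the convolution identity
\[
S_f(x,\chi)=\sum_{m}h(m)\,S_f(x/m,\psi),
\]
where $h$ is the multiplicative function supported on prime powers $p^k$ with $p\mid q$, $p\nmid r$, determined by $h*f\overline{\psi}=f\overline{\chi}$; since $f\in\mathcal C$ one has $h\in\mathcal C$ and hence $|h(m)|\le 1$. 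Summing this over $q\sim Q$ with $r\mid q$ (the third part of Lemma~\ref{Lem:Chars}) gives
\[
\sum_{\substack{q\sim Q\\ \chi\text{ induced by }\psi}}\frac{|S_f(x,\chi)|}{\varphi(q)}\ll\Bigl(\sigma_f(x,\psi)+\frac{1}{(\log x)^C}\Bigr)\frac{x}{\varphi(r)},
\]
after which Siegel--Walfisz bounds $\sigma_f(x,\psi)/\varphi(r)$ and one sums over the $\ll(\log x)^{6B+7+o(1)}$ characters of $\Xi$. This treats all conductors at once: there is no need for your case split on $r\lessgtr w$, nor for the reduction $f=f_0*h$ to a completely multiplicative $f_0$. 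Your route works, but the paper's use of $h\in\mathcal C$ (a consequence of $f\in\mathcal C$, not of complete multiplicativity) is both shorter and avoids the bookkeeping with squarefull-supported corrections.
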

 
Note that these bounds are non-trivial since the number of $x^{\varepsilon}$-smooth integers up to $x$ is $\gg x$.  Combining Corollary~\ref{MathResult3} with the machinery developed in~\cite{Sha}, one may prove for such multiplicative functions that their higher Gowers $U^k$-norms are $o(1)$ in progressions on average. This result will be stated and discussed in Section~\ref{sec:higher-uk}.

%%%%%%%%%%%%%%%%%%%%%%%
 \subsection{Breaking the $x^{1/2}$-barrier}

The main method used in our proofs is a  modification of that developed by Green in \cite{Gre}; see also~\cite{Sha} for using a similar argument to deal with higher Gowers norms. Green proved (a more general result which implies) that 
  \[ 
\sum_{\substack{q\sim Q \\ q \ \text{prime}}}  | \Delta(f,x;q,1) | \ll  \frac{x \log\log x} {(\log x)^2 } ,
 \]
 for any $Q<x^{\frac{20}{39}-\varepsilon}$, remarkably breaking the $x^{1/2}$-barrier. In comparison, previous results concerning breaking the $x^{1/2}$-barrier in the original Bombieri-Vinogradov theorem typically only work when $q$ is required to be ``smooth"~\cite{BFI1, FI, Zh}, or else only beat the $x^{1/2}$-barrier by $x^{o(1)}$~\cite{BFI2,BFI3}. See also~\cite{Dr} and the references therein for results along the same line when $f$ is the indicator function of smooth numbers.

In Green's result, the issue of correlations with non-trivial characters of small conductor  does not arise since no such character induces a character modulo a large prime (and Green is only summing over prime moduli). Nonetheless obstruction (ii) still applies and so Proposition \ref{LargePrimes2}, as well as the construction in section \ref{NoBV}, shows that Green's result is more-or-less best possible (up to the $\log\log x$ factor).
One can modify Green's proof to include composite moduli by taking account of the characters $\psi_j$, as we have done here.
This leads to the following extensions (for fixed $a$) of Corollary \ref{Thm: SW} and Theorems \ref{Cor:Result2}, as well as Theorem~\ref{Keep Xi} and Corollary~\ref{MathResult3}.

 \begin{theorem} \label{Cor:Result2+}  Let $f$ be a  multiplicative function with $f\in \mathcal C$.
Fix $\delta, \varepsilon>0$ and let $k$ be the largest integer $\leq 1/\varepsilon^2$.
For any     $1\leq |a|\ll  Q\leq x^{\frac{20}{39}-\delta}$, we have  
\[ 
\sum_{\substack{q\sim Q \\ (a,q)=1}}   | \Delta_k(f,x;q,a) | \ll    \frac{x} {(\log x)^{1-\varepsilon}} .
 \]
If $f$ satisfies the  $1$-Siegel-Walfisz criterion then  
\[ 
\sum_{\substack{q\sim Q \\ (a,q) = 1}}   | \Delta(f,x;q,a) | \ll    \frac{x} {(\log x)^{1-\varepsilon}}  .
 \]
\end{theorem}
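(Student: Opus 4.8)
The plan is to adapt the bilinear method of \cite{GHS} and \cite{Gre} (see also \cite{Sha}): reduce the sum to a smooth bilinear problem, peel off the contribution of the characters that genuinely correlate with $f$ — which is exactly what the correction in $\Delta_k$ records — and then invoke Green's argument to push the bilinear estimate past the $x^{1/2}$-barrier.

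\textbf{Setup and reduction to a bilinear form.} Since $a$ is a \emph{fixed} class, first drop the absolute values: choose unimodular $c_q=c_q(x)$ with $\sum_{q\sim Q,\,(a,q)=1}|\Delta_k(f,x;q,a)|=\sum_{q\sim Q,\,(a,q)=1}c_q\,\Delta_k(f,x;q,a)$ and estimate the linear sum. Detecting $n\equiv a\pmod q$ with additive characters (write $e(\theta)=e^{2\pi i\theta}$) and putting each $b/q$ in lowest terms $b'/d$ with $d\mid q$, Gauss sums convert $\sum_{n\le x,\,n\equiv a\,(q)}f(n)$ into $\frac1q\sum_{n\le x}f(n)$ plus a weighted sum of $S_f(x,\chi)$ over characters $\chi$ of conductor $d\mid q$ with $d>1$. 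The characters of conductor $\le\log x$ equal to one of $\psi_1,\dots,\psi_k$ are precisely those removed in $\Delta_k$; the remaining conductor-$\le\log x$ contribution (including the trivial character) is controlled by the ``few large values'' estimate for pretentious multiplicative functions used in \cite{GHS} and \cite{BGS} — this is where $k=\lfloor 1/\varepsilon^2\rfloor$ forces the saving $(\log x)^{1-\varepsilon}$ (and where Corollary~\ref{No Exceptions} is used to reconcile this paper's $\Delta_k$ with the one in \cite{GHS}). What is left is the contribution of characters of conductor exceeding $\log x$, i.e. $\sum_{q\sim Q}\frac{c_q}{q}\sum_{1\le b<q}e(-ab/q)\,S_f^{+}(x;b/q)$ with $S_f^{+}(x;\alpha)=\sum_{n\le x}f(n)e(n\alpha)$ and only ``large-denominator'' $b/q$ surviving; this is the genuinely new piece.

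\textbf{The combinatorial decomposition.} To treat $S_f^{+}(x;\alpha)$ for $\alpha$ of large denominator I would use the identity $f(n)\log n=(\Lambda_f*f)(n)$ together with a Heath--Brown (or Vaughan) identity applied to $\Lambda_f$; since $f\in\mathcal C$ we have $|\Lambda_f|\le\Lambda$, so the resulting coefficients are divisor-bounded and the decomposition goes through exactly as in \cite{GHS}. Up to a harmless $\log x$ this writes the sum as $O((\log x)^{O(1)})$ pieces, each of ``Type I'' form $\sum_{d\le D}\alpha_d\sum_e w(e)\,e(de\alpha)$ with a smooth weight $w$ on a long interval, or of ``Type II'' (bilinear) form $\sum_{m\sim M}\sum_{n\sim N}\alpha_m\beta_n\,e(mn\alpha)$ with $MN\asymp x$, $\alpha,\beta$ divisor-bounded, and $M,N$ confined to the usual Heath--Brown ranges. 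A long Type I piece, summed over $b$ and then over $q\sim Q$, contributes an error $\ll x/(\log x)^{1-\varepsilon}$ as soon as the smooth variable has length $\gg Q(\log x)^{1-\varepsilon}$ — i.e. as soon as the other variable has size $\le x^{19/39-o(1)}$ — and the leftover Type I main terms reassemble, up to admissible error, into the character main terms already accounted for above.

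\textbf{Breaking the $x^{1/2}$-barrier in the bilinear term (the crux).} The decisive contribution is from the Type II pieces $\sum_{q\sim Q}\frac{c_q}{q}\sum_{1\le b<q}e(-ab/q)\sum_{m\sim M}\sum_{n\sim N}\alpha_m\beta_n\,e(bmn/q)$ with $MN\asymp x$. Here I would run the argument of \cite{Gre}: apply Cauchy--Schwarz in suitable variables together with a careful treatment of the resulting complete and incomplete exponential sums — equivalently, bound the number of solutions, with divisor weights, of the relevant congruences as $q$ ranges over $(Q,2Q]$ — and then optimize the combinatorial ranges $M,N$ supplied by the Heath--Brown identity against this count. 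It is this optimization, using crucially that $a$ is a fixed class so that no supremum over residue classes need be absorbed, that makes every $Q\le x^{20/39-\delta}$ admissible; here the restriction $1\le|a|\ll Q$ keeps the shifted argument $n-a$ (and the phases $e(-ab/q)$) under control throughout. For composite $q$ (rather than only primes as in \cite{Gre}) one checks that the extra divisors of $q$ cost nothing worse, being absorbed into the weight $\frac1q$ and the $(\log x)^{O(1)}$ count of pieces. \emph{This is the main obstacle}: breaking the $x^{1/2}$-barrier rests entirely on this bilinear estimate, and the bookkeeping must be arranged so that introducing the correction $\Delta_k$ does not interfere with the gain.

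\textbf{The Siegel--Walfisz version.} If $f$ satisfies the $1$-Siegel--Walfisz criterion then for every character $\chi$ of conductor $\le\log x$ — in particular for each $\psi_j$, $j\le k$ — one has $|S_f(x,\chi)|\ll_A x/(\log x)^{A}$ for every $A$; so the terms removed in passing from $\Delta$ to $\Delta_k$ contribute, after summing $\frac1{\varphi(q)}$ over $q\sim Q$ with $r_j\mid q$, at most $O_k(x/(\log x)^{A})\ll x/(\log x)^{1-\varepsilon}$ (using $\sum_{q\sim Q}\frac1{\varphi(q)}=O(1)$). Hence $\sum_{q\sim Q,\,(a,q)=1}|\Delta(f,x;q,a)|\le\sum_{q\sim Q,\,(a,q)=1}|\Delta_k(f,x;q,a)|+O(x/(\log x)^{1-\varepsilon})$, and the second assertion follows from the first.
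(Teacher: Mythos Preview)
Your proposal has a genuine gap at the combinatorial decomposition step, and your description of the bilinear endgame misses the key input.

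\textbf{The decomposition.} You write that you will ``use the identity $f(n)\log n=(\Lambda_f*f)(n)$ together with a Heath--Brown (or Vaughan) identity applied to $\Lambda_f$''. There is no such identity for a general $f\in\mathcal C$. Vaughan's and Heath--Brown's identities are algebraic facts about $\zeta'/\zeta$ and $1/\zeta$; they do not transfer to $F'/F$ for an arbitrary Dirichlet series $F$ with $|\Lambda_f|\le\Lambda$. In particular you cannot arrange Type I pieces with a genuinely smooth (long) variable this way. The paper avoids this entirely: it never decomposes $f$ combinatorially. Instead it applies Ramar\'e's weight $w(n)=1/(1+\#\{p\in[Y,Z):p\mid n\})$ directly to $\sum_{n\le x}f(n)F(n)$, with $F(n)=\sum_{q\sim Q}\xi_q(\1_{n\equiv a\ (q)}-q_r^{-1}\1_{n\equiv a\ (q_s)})$. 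One Cauchy--Schwarz in the $m$-variable (where $n=pm$) produces the bilinear form $\E_{p,p'\sim P}\big|\sum_m F(pm)\overline{F(p'm)}\big|$, with no identity for $f$ required beyond multiplicativity at a single prime. The ``small-conductor'' contribution is not peeled off via Gauss sums; it is handled by the smooth/rough factorisation $q=q_sq_r$ and comparing to the sum over $n\equiv a\pmod{q_s}$, after which Corollary~\ref{MathResult2Cor**} (which rests on \cite{GHS}) controls the $q_s$-piece.

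\textbf{The bilinear estimate.} Your phrase ``run the argument of \cite{Gre}: apply Cauchy--Schwarz \ldots together with a careful treatment of the resulting complete and incomplete exponential sums'' conceals the actual mechanism. After Cauchy--Schwarz one opens $\sum_m F(pm)\overline{F(p'm)}$, inserts a smooth cutoff, and applies Poisson summation; the resulting phase is manipulated via the reciprocity relation $\overline{u}/v+\overline{v}/u\equiv 1/(uv)\pmod 1$ into a sum of the shape $\sum_{\ell,\ell'}\alpha(\ell)\beta(\ell')\,\e_{p'\ell}(kh\cdot\overline{p\ell'})$ (Lemma~\ref{lem:g(q,q')}). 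The exponent $20/39$ comes \emph{specifically} from the Bettin--Chandee bound \cite{BC} on such trilinear Kloosterman forms (an improvement of Duke--Friedlander--Iwaniec \cite{DFI}); this is the step that breaks the $x^{1/2}$-barrier, and it is not obtainable from completing sums and Weil alone. Without naming and invoking this input, the bilinear step does not close.

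Your final paragraph deducing the Siegel--Walfisz version from the $\Delta_k$ version is essentially correct and matches the paper (it is packaged there as Proposition~\ref{Using SW}).
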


\begin{theorem} \label{MathResult3+} 
 Fix $\delta, B>0$. Let $y = x^{\varepsilon}$ for some $\varepsilon > 0$ sufficiently small in terms of $\delta$. Let $f\in \mathcal C$ be a   multiplicative function which is only supported on $y$-smooth integers. Then there exists a set, $\Xi$, of primitive characters, containing $\ll   (\log x)^{6B+7+ o(1)}$ elements, such that for any $1 \leq |a| \ll Q \leq x^{\frac{20}{39}-\delta} $, we have
 \begin{equation*}
 \sum_{\substack{q \sim Q \\ (a,q) = 1}} \left|  \Delta_{\Xi}(f,x;q,a)  \right|  \ll \frac x{(\log x)^B}.
 \end{equation*}
If $f$ satisfies the   Siegel-Walfisz criterion then   
 \begin{equation} \label{eq: FT2}
  \sum_{\substack{q \sim Q \\ (a,q)=1}}   \left|  \Delta(f,x;q,a)  \right|    \ll \frac x{(\log x)^B}.
   \end{equation}
\end{theorem}

The proofs of these last two results, which   break the $x^{1/2}$-barrier, rely on a deep estimate of Bettin and Chandee \cite{BC} on bilinear Kloosterman sums, which is an impressive development going beyond the famous estimates of Duke, Friedlander and Iwaniec \cite{DFI}. It should be noted that Fouvry and Tenenbaum (Th\'eor\`eme 2 in \cite{FT2}) established
\eqref{eq: FT2} unconditionally when $f$ is the characteristic function of the $y$-smooth integers for any $Q\leq x^{3/5-\delta}$, and   any $y\leq x^\epsilon$.

Our focus in this last part of the paper is to go beyond the $x^{1/2}$-barrier by incorporating the necessary expedient of $x^\varepsilon$-smooth functions into our arguments. Can one go much further beyond the $x^{1/2}$-barrier using current technology, especially if $f$ is $y$-smooth for $y$ a lot smaller than $x^\varepsilon$?
In a sequel to this paper,  joint with Sary Drappeau, we will extend Theorem \ref{MathResult3+}  to the range $Q\leq x^{3/5-\varepsilon}$ and with a wide range for the smoothness parameter $y$, by incorporating somewhat different techniques into our arguments, and improving the $x$ in the upper bound to $\Psi(x,y)$, the counting function for the $y$-smooths.

Probably the most novel part of our work is to compare the mean value of $f$ in the arithmetic progression $a \pmod q$ with the the mean value of $f$ in the arithmetic progression $a \pmod {q_s}$, where $q_s$ is the largest $w$-smooth divisor of $q$. See, for example,  Theorem \ref{MathResult2}.

There is a series of seven papers by Elliott on multiplicative functions in arithmetic progressions, some of which explore Bombieri-Vinogradov type theorems (particularly \cite{Ell4, Ell5}). Several of the themes in this paper have their origins in his seminal work.

\subsection*{Notation}
Let $w = w(x)$ be a parameter, which will typically be a fixed power of $\log x$. 
For any positive integer $q$, we have a unique decomposition $q = q_sq_r$ of $q$ into a $w$-smooth part $q_s$ and a $w$-rough part $q_r$, where $q_s = (q, \prod_{p\leq w}p^{\infty})$ is the largest $w$-smooth integer dividing $q$, and $q_r = q/q_s$ has no prime factors $\leq w$. Although the values of $q_r,q_s$ depend on the parameter $w$, we will not explicitly indicate this dependence as the choice of $w$ should always be clear from the context.

%%%%%%%%%%%%%%%%
\section{Smooth number estimates}\label{sec:smooth-prelim}
%%%%%%%%%%%%%%%%

We call $n$ a $y$-\emph{smooth integer} if all of its prime factors are $\leq y$. We let $P(n)$ denote the largest prime factor of $n$ so that $n$  is $y$-smooth if and only if $P(n)\leq y$.

We need several well-known estimates involving the distribution of smooth numbers (unless otherwise referenced, see \cite{Gra}). Let $\Psi(x,y)$ be the number of $y$-smooth integers up to $x$.   If $y\leq (\log x)^{1+o(1)}$ then $\Psi(x,y)=x^{o(1)}$. Otherwise if $x\geq y\geq  (\log x)^{1+\varepsilon}$
we write $x=y^u$ and then 
\[
\Psi(x,y) = x /u^{u+o(u)} .
\]
In particular if $y=(\log x)^A$ then $\Psi(x,y) =x^{1-\frac 1A + o(1)}$. Key consequences include if $x\geq y$ then 
\[
\sum_{\substack{ x<n\leq 2x \\ P(n)\leq y }} \frac 1n,\quad \frac 1{\log y} \sum_{\substack{ n>x \\ P(n)\leq y }} \frac 1n = u^{-u+o(u)}  + x^{-1+o(1)}.
\]
One consequence of this is that if $Y\geq w^{(C+\varepsilon) \log\log x / \log\log\log x}$ then
\begin{equation} \label{tail sum}
\sum_{\substack{  n>Y \\ P(n)\leq w }} \frac xn \ll \frac x{(\log x)^C}.
\end{equation}

Rather more precisely, we define $\rho(u)=1$ for $0\leq u\leq 1$, and then determine $\rho(u)$ from the differential-delay equation $\rho'(u)=-\rho(u-1)/u$ for all $u>1$. Then 
\begin{equation} \label{HildEst}
\Psi(x,y) = x\rho(u) \left( 1+O\left( \frac {\log (u+1)}{\log y}\right)\right)
\end{equation}
for $x\geq y\geq \exp( (\log\log x)^2)$.

Define $\alpha(x,y)$ to be the real number for which 
\[
\sum_{p\leq y} \frac{\log p}{p^\alpha-1} = \log x;
\]
one has $y^{1-\alpha}\asymp u\log u$ when $y \gg \log x$. If $x\geq y\geq  (\log x)^{1+\varepsilon}$ then $\alpha\gg \varepsilon$.
We need the comparison bounds
\begin{equation} \label{alphaCompare1}
\Psi(x/d,y)= \left( 1 + O \left( \frac 1u \right) \right)  \frac{\Psi(x,y)} {d^{\alpha}}
\end{equation}
for $d=y^{O(1)}$ and, in general,
\begin{equation} \label{alphaCompare}
\Psi(x/d,y) \ll \frac{\Psi(x,y)} d \cdot d^{1-\alpha}
\end{equation}
which follow from Theorem 2.4 of \cite{BT}.
This last bound implies  that if $y \geq (\log x)^{1+\varepsilon}$ then $\int_2^x \Psi(t,y)/t \ dt \ll_{\varepsilon} \Psi(x,y)$.
The bound in \eqref{alphaCompare1} is not useful for us when $u,d\ll 1$. In this range we have
\begin{equation} \label{alphaCompare2}
\Psi(x/d,y) =\frac { \Psi(x,y) }{ d^{\alpha+O(1/\log x)}}  \left( 1 +O\left(\frac{\log u}{\log y} \right) \right)
\end{equation}
by (2.22) and (2.23) of~\cite{BT}.

%To deal with this we use the estimate
%$\Psi(x,y) = x \rho(u)  ( 1 +O(1-\alpha))$ derived from \eqref{HildEst}. Let $d=y^\theta$ with $\theta\ll 1$. Then we need to determine
%\[
%\log(\rho(u-\theta)/\rho(u)) = -\int_{u-\theta}^u (\rho'(t)/\rho(t)) dt = \int_{u-\theta}^u \xi(t) dt +O(\theta/u)
%\]
%by (3.32) of \cite{BT}, where $\xi(t)$ satisfies $e^\xi =t\xi+1$. Now $\xi(t) =\xi(u)+O(1/u)$ in our range so the integral equals
%$-\theta \xi(u)+O(\theta/u)$. By (3.3) of \cite{BT} we see that  this equals $(1-\alpha)\log d+O(\theta/u + 1/(\log y)^2)$.
%We deduce that 

  Theorem 6    of \cite{FT} gives a version of the Bombieri-Vinogradov Theorem for $y$-smooth numbers (though see also \cite{Wol}, and see~\cite{Har} for an improvement on the range of $y$):\ For any $A>0$ there exists a constant $B=B(A)$ such that 
\begin{equation} \label{BVsmooth}
\sum_{q\leq \sqrt{x}/(\log x)^B} \max_{(a,q)=1} \left|  \Psi(x,y;q,a) - \frac {\Psi_q(x,y)}{\varphi(q)} \right| \ll \frac x{(\log x)^A}
\end{equation}
where $ \Psi_q(x,y)$ denotes the number of $y$-smooth integers up to $x$ that are coprime to $q$, and $ \Psi(x,y;q,a)$ denotes those that are $\equiv a \pmod q$. Then (4.11) of~\cite{BT} gives the upper bound
\begin{equation} \label{SmoothUB}
 \Psi_q(x,y) \ll  \frac  {\varphi(q)} q \Psi (x,y)  
\end{equation}
provided $x\geq y\geq \exp((\log\log x)^2)$ and $q\leq x$.

 Corollary 2 of \cite{Hil} implies a good  upper bound from smooth numbers in short intervals: For any fixed $\kappa>0$,
\begin{equation} \label{SmoothShorts}
\Psi(x+\frac xT,y)-\Psi(x,y) \ll_\kappa \frac {\Psi(x,y)}T \text{  for  } 1\leq T\leq \min \{ y^\kappa, x\}.
\end{equation}

%%%%%%%%%%%%%%%%%%%%%%%%
\section{The contribution of characters}   \label{sec:contrib-char}
%%%%%%%%%%%%%%%%%%%%%%%%

 \subsection{Comparing large character sums for  a primitive character and the characters it induces}
Recall the definition of $\sigma_f(x,\psi)$ from the introduction. We now define
\[
\sigma_f(x,z,\psi) := \sup_{x/z< X \leq x} |S_f(X,\psi)/X|,
\]
so that $\sigma_f(x,\psi) =\sigma_f(x,x^{1/2},\psi) $.

\begin{lemma} \label{Lem:Chars} Suppose that $f \in \mathcal{C}$.  Let $z\geq \exp((\log\log x)^2)$ and $Q \leq x$.
If $\chi \pmod q$ is induced by $\psi\pmod r$, where $r \leq q \leq Q$, then 
\[
S_f(x,\chi) \ll _C  x \sigma_f(x,z,\psi)  \log\log x +\frac x{(\log x)^C}  ,
\]
and
\[
S_f (x,\psi) \ll _C  x  \sigma_f(x,z,\chi)   \log\log x +\frac x{(\log x)^C} ,
\]
for any given constant $C>0$. Furthermore, for $\psi\pmod{r}$ with $r \leq Q$ we have
\[
 \sum_{\substack{r|q\sim Q  \\ \chi \mod   q \ \text{induced by} \ \psi}}  \frac{|S_f(x,\chi)|}{\varphi(q)} \ll_C
 \left(\sigma_f(x,z,\psi)\    +\frac 1{(\log x)^C}\right)  \frac{x}{\varphi(r)}  ;
\]
and if $P(r)\leq w$ then
\[
\frac 1{\log w}\sum_{\substack{r|q\leq Q ,\ P(q)\leq w  \\ \chi \mod   q \ \text{induced by} \ \psi}}  \frac{|S_f(x,\chi)|}{\varphi(q)} \ll_C
 \left(\sigma_f(x,z,\psi)\    +\frac 1{(\log x)^C}\right)  \frac{x}{\varphi(r)}    .
\]
\end{lemma}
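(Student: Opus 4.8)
I would reduce all four assertions to a single multiplicative identity relating $F_\chi(s)=\sum_n f(n)\overline\chi(n)n^{-s}$ and $F_\psi(s)=\sum_n f(n)\overline\psi(n)n^{-s}$. Since $\overline\chi(n)=\overline\psi(n)\mathbf{1}_{(n,q)=1}$ (note $r\mid q$, so $(n,q)=1\Rightarrow(n,r)=1$), and since the $p$-th Euler factor $E_p(s)$ of $F_\psi$ equals $1$ whenever $p\mid r$, one gets $F_\chi(s)=F_\psi(s)\,H_q(s)$ with $H_q(s)=\prod_{p\mid q,\,p\nmid r}E_p(s)^{-1}$ the Dirichlet series of a multiplicative function $h_q$ supported on integers all of whose prime factors divide $q$ but not $r$. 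The point is that $f\in\mathcal C$ forces $f\overline\psi\in\mathcal C$ (one checks $\Lambda_{f\overline\psi}(p^k)=\overline\psi(p^k)\Lambda_f(p^k)$), and since the reciprocal of an element of $\mathcal C$ lies in $\mathcal C$ and restricting its Euler product to a set of primes keeps it in $\mathcal C$, we get $h_q\in\mathcal C$, hence $|h_q(n)|\le 1$ for all $n$. Comparing Dirichlet coefficients and summing up to $x$ gives the clean identity
\[
S_f(x,\chi)=\sum_{\substack{a\ge1\\ p\mid a\,\Rightarrow\, p\mid q,\ p\nmid r}} h_q(a)\,S_f(x/a,\psi),
\]
and symmetrically, from $F_\psi=F_\chi\prod_{p\mid q}E_p(s)$, a second identity $S_f(x,\psi)=\sum_a \widetilde h_q(a)\,S_f(x/a,\chi)$ with $\widetilde h_q(p^k)=f(p^k)\overline\psi(p^k)$, so again $|\widetilde h_q(n)|\le1$, and the same support property.

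\textbf{The two pointwise bounds.} I would split the first identity at $a=z$. For $a<z$ we have $x/z<x/a\le x$, so $|S_f(x/a,\psi)|\le (x/a)\,\sigma_f(x,z,\psi)$, while $\sum_{p\mid a\Rightarrow p\mid q}|h_q(a)|/a\le\prod_{p\mid q}(1-1/p)^{-1}=q/\varphi(q)\ll\log\log x$, producing the term $x\,\sigma_f(x,z,\psi)\log\log x$. For $a\ge z$ I would use the trivial bound $|S_f(x/a,\psi)|\le x/a$ and Rankin's trick with $\lambda=(C+1)/\log\log x$:
\[
\sum_{\substack{a\ge z\\ p\mid a\Rightarrow p\mid q}}\frac{|h_q(a)|}{a}\ \le\ z^{-\lambda}\prod_{p\mid q}\bigl(1-p^{-(1-\lambda)}\bigr)^{-1}\ \ll\ (\log x)^{-(C+1)}(\log\log x)^{O(1)}\ \ll\ (\log x)^{-C},
\]
where the product is $\le(\log\log x)^{O(1)}$ after splitting the primes dividing $q$ at $(\log x)^3$ (small primes contributing $\ll\log\log\log x$ via Mertens, the $\ll\log x/\log\log x$ remaining primes $o(1)$), and $\log z\ge(\log\log x)^2$ makes $z^{-\lambda}$ beat every power of $\log x$. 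This gives the first bound; interchanging $\chi$ and $\psi$ (using $\widetilde h_q$) gives the second.

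\textbf{The two averaged bounds.} Here the decisive observation is that one must \emph{not} plug in the pointwise estimate and sum, since the factor $q/\varphi(q)$ would leave an extra $\log\log x$. Instead I would substitute the identity directly, split at $a=z$, and then swap the order of summation, doing the $q$-sum first. Because $h_q(a)\ne0$ forces $r\,\mathrm{rad}(a)\mid q$ with $(\mathrm{rad}(a),r)=1$, the inner sum $\sum_{q\sim Q,\ r\,\mathrm{rad}(a)\mid q}1/\varphi(q)$ is $\ll 1/(\varphi(r)\varphi(\mathrm{rad}(a)))$ (from $\varphi(ab)\ge\varphi(a)\varphi(b)$ and $\sum_{m\leq M}1/\varphi(m)=c_0\log M+O(1)$, so $\sum_{m\sim M}1/\varphi(m)\ll1$). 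Then the $a$-sum converges,
\[
\sum_{a\ge1}\frac{1}{a\,\varphi(\mathrm{rad}(a))}=\prod_p\Bigl(1+\tfrac{1}{(p-1)^2}\Bigr)=O(1),
\]
which eliminates the would-be $\log\log x$ and yields $\ll x\,\sigma_f(x,z,\psi)/\varphi(r)$ from the range $a<z$; the range $a\ge z$ gives $\ll x(\log x)^{-C}/\varphi(r)$ by Rankin applied to the same convergent product (with $\lambda=C/\log\log x$). For the $w$-smooth statement the only change is that $q$, $a$ and $\ell=r\,\mathrm{rad}(a)$ are now $w$-smooth, so $\sum_{q\le Q,\ \ell\mid q,\ P(q)\le w}1/\varphi(q)\ll(\log w)/\varphi(\ell)$ by Mertens over $p\le w$; this $\log w$ is cancelled by the $1/\log w$ normalization, the $a$-sum is the same convergent product, and the tail is treated as before.

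\textbf{Main obstacle.} The difficulty is conceptual rather than computational: recognizing that the connecting multiplicative function $h_q$ lies in $\mathcal C$ (hence is $1$-bounded) with support precisely among primes dividing $q$ but not $r$, and — for the averaged statements — resisting the temptation to estimate term by term, instead keeping the $a$-sum intact so that the average over $q$ can be exploited through the convergence of $\sum_a 1/(a\varphi(\mathrm{rad}(a)))$. Everything else (Mertens, $\varphi$-sums over dyadic intervals, Rankin's trick with $z\ge\exp((\log\log x)^2)$) is routine.
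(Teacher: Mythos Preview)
Your proof is correct and follows essentially the same route as the paper: the convolution identity $S_f(x,\chi)=\sum_m h(m)S_f(x/m,\psi)$ with $h\in\mathcal C$ supported on $\{m:p\mid m\Rightarrow p\mid q,\ p\nmid r\}$, the split at $m=z$, and---for the averaged statements---the swap of the $q$- and $m$-sums so that the convergent product $\prod_p(1+1/(p-1)^2)$ replaces the pointwise $\log\log x$. The only cosmetic difference is that for the tail $m\ge z$ the paper invokes a smooth-number bound (the worst case being $q$ equal to the product of all primes up to $\log x$, so that \eqref{tail sum} applies), whereas you use Rankin's trick directly; both give the required $(\log x)^{-C}$.
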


\begin{proof}   Let $h(.)$ be the multiplicative function which is supported only on the prime powers $p^k$, for which $p$ divides $q$ but not $r$, with $(h*f\overline\psi)(p^k)=0$ for these $p^k$. Thus $h*f\overline\psi = f\overline\chi$, and note that $h\in \mathcal C$ as $f\in \mathcal C$, so that each $|h(m)|\leq 1$. Now
\[
S_f(x,\chi) = \sum_{m\leq x} h(m) S_f(x/m,\psi)
\]
and therefore we obtain, as $|S_f(x/m,\psi)|\leq \sigma_f(x,z,\psi) x/m$ if $m\leq z$, 
\[
\begin{split}
|S_f(x,\chi)| & \ll \sigma_f(x,z,\psi)\ x \sum_{\substack{m\leq z \\ p|m\implies p|q, \ p\nmid r } } \frac{1}m + x \sum_{\substack{z<m\leq x\\ p|m\implies p|q } }  \frac{1}m\\
& \ll_C \sigma_f(x,z,\psi)\ x \cdot \prod_{p|q,\ p\nmid r} \frac p{p-1} + \frac{x}{z^{\alpha}} \sum_{p|m\implies p|q} \frac{1}{m^{1-\alpha}} ,
  \end{split}
\]  
where we applied Rankin's trick to the second sum with $\alpha = 2C/\log\log x$. Both terms in the bound are maximized when $q$ is the product of the primes $\ll \log Q \leq \log x$, in which case  the Euler product is $\ll \log\log Q$ and
\[
\begin{split} 
\frac{1}{z^{\alpha}} \sum_{p|m\implies p|q} \frac{1}{m^{1-\alpha}} &\leq \frac{1}{z^{\alpha}} \prod_{p \leq \log x  } \left(1 - \frac{1}{p^{1-\alpha}}\right)^{-1} \leq \frac{1}{z^{\alpha}} \prod_{p \leq \log x} \left(1 - \frac{e^{2C}}{p}\right)^{-1} \\
&\ll \frac{(\log\log x)^{O_C(1)}}{z^{\alpha}} \ll \frac{x}{(\log x)^C}.
\end{split}
\]

 In the other direction we have
\[
S_f(x,\psi) = \sum_{\substack{m\leq x \\ p|m\implies p|q, \ p\nmid r } } f(m) \overline{\psi(m)} S_f(x/m,\chi)
\]
and the same argument leads to the second claimed result.

We now prove the fourth part of the Lemma (the third part is proved by a simple modification of this proof), by using the upper bound proved for $|S_f(x,\chi)|$ in the first part. The second term in the upper bound is, writing $q=rn$,
\[
\ll  \frac{1}{\log w} \sum_{ \substack{ r|q\leq Q\\   P(q)\leq w}}  \frac{1}{\varphi(q)}  \frac x{(\log x)^C} 
\ll  \frac{1}{\log w} \sum_{ \substack{ n\leq Q/r\\   P(n)\leq w}}  \frac{1}{\varphi(n)} \cdot  \frac x{\varphi(r)(\log x)^C} \ll \frac x{\varphi(r)(\log x)^C}.
\] 
The first term in the upper bound is
\[
\begin{split}
&  \ll  \frac{1}{\log w} \sum_{ \substack{ r|q\leq Q\\   P(q)\leq w}}  \frac{1}{\varphi(q)} \left( \sigma_f(x,z,\psi)\ x\sum_{\substack{m\leq z \\ p|m\implies p|q, \ p\nmid r } } \frac{1}m \right)\\
 & \leq    \sigma_f(x,z,\psi)\   \frac{1}{\log w}   \sum_{ \substack{ r|q\leq Q\\   P(q)\leq w}}  \frac{x}{\varphi(q)}\sum_{\substack{m\leq z \\ m|q, \ (m,r)=1} } \frac{\mu^2(m)}{\varphi(m)}    \\
 & \leq   \sigma_f(x,z,\psi)\ x \sum_{\substack{m\leq z \\  (m,r)=1 } }  \frac{\mu^2(m)}{\varphi(m)}  
\frac{1}{\log w}   \sum_{ \substack{ mr|q\leq Q\\  P(q)\leq w}}   \frac{1}{\varphi(q)}     \\
  & \ll   \sigma_f(x,z,\psi)\ \frac{x}{\varphi(r)} \sum_{\substack{m\leq z \\  (m,r)=1 } }  \frac{\mu^2(m)}{\varphi(m)^2}   \frac{1}{\log w}   \sum_{ \substack{ n\leq Q/mr\\ P(n)\leq w}}   \frac{1}{\varphi(n)}   \ll   \sigma_f(x,z,\psi)\ \frac{x}{\varphi(r)}   ,  \\
  \end{split}
  \]
writing $q=mrn$,  and the claim follows.
\end{proof}

%%%%%%%%%%%%%%%%
 \subsection{Focusing on large character sums}\label{sec:large-char-sum-2}
For fixed $B>0$, let $\Xi(B,Q)$ denote the set of primitive characters $\psi \pmod r$ with $r\leq Q$ for which 
 \[
 \sigma_f(x,\psi) \geq \frac 1{(\log x)^B}.
 \]
  
\begin{corollary} \label{No Exceptions} Let $f\in \mathcal C$ and $B > 0$. (a)\ Suppose that $Q \leq x$. If $\chi \pmod q$ is a character with $q\leq Q$ and  is not induced by any of the characters in $\Xi(B,Q)$, then 
\[
 S_f(x,\chi) \ll \frac {x\log\log x}{(\log x)^B}.
 \]
(b)\ Now suppose that $\log Q=(\log x)^{o(1)}$ and $J\geq 2$ is a given integer with $B<1-1/\sqrt{J}$. Then 
$|\Xi(B,Q)|<J$ and 
\begin{equation}\label{GHSprecise}
|\Delta_{\Xi(B,Q)}(f,x;q,a)| \ll \frac 1{\varphi(q)} \frac x {(\log x)^{B+o(1)}}
\end{equation}
for any $q \leq Q$ and $(a,q) = 1$.
\end{corollary}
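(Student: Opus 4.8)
The plan is to deduce part (a) directly from Lemma \ref{Lem:Chars}, and then to establish part (b) by combining (a) with a ``character-counting'' argument (à la the pretentious large sieve / repulsion phenomenon) to bound $|\Xi(B,Q)|$, followed by the character-expansion \eqref{expansion}.

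For part (a): suppose $\chi \pmod q$ is induced by the primitive character $\psi \pmod r$ with $r \mid q \leq Q$. If $\psi \notin \Xi(B,Q)$ then by definition $\sigma_f(x,\psi) < (\log x)^{-B}$, and since $\sigma_f(x,z,\psi) \leq \sigma_f(x,x^{1/2},\psi) = \sigma_f(x,\psi)$ for $z \leq x^{1/2}$ (take $z = \exp((\log\log x)^2)$, which satisfies the hypothesis of Lemma \ref{Lem:Chars} and is $\leq x^{1/2}$), the first display of Lemma \ref{Lem:Chars} gives $S_f(x,\chi) \ll_C x\,\sigma_f(x,z,\psi)\log\log x + x/(\log x)^C \ll x\log\log x/(\log x)^B + x/(\log x)^C$. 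Choosing $C = B$ (or any $C \geq B$) gives the claimed bound $S_f(x,\chi) \ll x\log\log x/(\log x)^B$. One also has to dispense with the case $\chi = \chi_0$ (then $S_f(x,\chi_0) = \sum_{n \leq x, (n,q)=1} f(n)$, which must itself be controlled; but here $\chi_0$ is induced by the trivial character mod $1$, and if that character is not in $\Xi(B,Q)$ the same estimate applies — otherwise it is one of the removed characters and the statement is vacuous for it), so part (a) is essentially immediate.

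For part (b), the key point is the upper bound $|\Xi(B,Q)| < J$. The natural approach is to invoke a result from \cite{GHS}: the statement \eqref{eq:GHS} already encodes (via its proof) that the number of primitive characters $\psi$ of conductor $\leq \log x$ with $|S_f(x,\psi)| \gg x/(\log x)^{1-1/\sqrt{J}}$ is strictly less than $J$. Indeed this is exactly the repulsion phenomenon for pretentious multiplicative functions: a $1$-bounded $f$ cannot pretend to be close to two different characters $\psi_i \overline{\psi_j}$ simultaneously, and quantitatively the ``$1/\sqrt{J}$'' exponent comes from the $\mathbb{D}$-metric triangle inequality applied to $J$ characters (if $\sigma_f(x,\psi_i) \geq (\log x)^{-B}$ for $i=1,\dots,J$ then $\mathbb{D}(f,\psi_i;x)^2 \leq B\log\log x$ for each $i$, but then $\mathbb{D}(\psi_i\overline{\psi_j},1;x)^2 \leq (2\sqrt{B})^2 \log\log x = 4B\log\log x$ for all pairs, while the left side is $\geq (1-o(1))\log\log x$ for $\psi_i \neq \psi_j$ with conductors $\leq \log x$; this forces $4B \geq 1$, i.e. $B \geq 1/4$ — to get the sharp $B < 1 - 1/\sqrt{J}$ threshold one sums more carefully over all $\binom{J}{2}$ pairs as in \cite{GHS}). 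Granting $|\Xi(B,Q)| < J$, the bound \eqref{GHSprecise} follows by expanding $\Delta_{\Xi(B,Q)}(f,x;q,a) = \frac{1}{\varphi(q)}\sum_{\chi \pmod q,\ \chi \notin \Xi(B,Q)_q}\chi(a)S_f(x,\chi)$ (this uses that the set of characters removed is precisely those induced by $\Xi(B,Q)$), applying part (a) to each of the $\leq \varphi(q)$ remaining characters, and absorbing the $\log\log x$ and the $o(1)$ from $\log Q = (\log x)^{o(1)}$ into the exponent to get $\ll \frac{1}{\varphi(q)}\cdot \varphi(q) \cdot \frac{x\log\log x}{\varphi(q)(\log x)^B} = \frac{1}{\varphi(q)}\frac{x}{(\log x)^{B+o(1)}}$.

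The main obstacle is the sharp counting bound $|\Xi(B,Q)| < J$ with the precise threshold $B < 1 - 1/\sqrt{J}$: getting the constant right in the repulsion argument is delicate, since a crude triangle inequality in the $\mathbb{D}$-metric only yields $B \geq 1/4$, and one needs the more refined argument (quadratic-form / sum-over-all-pairs) from \cite{GHS} to reach $1 - 1/\sqrt{J}$. Since \eqref{eq:GHS} and its underlying machinery from \cite{GHS} are available to us, I would quote this counting bound from there rather than reprove it; the remaining steps — deducing (a) from Lemma \ref{Lem:Chars} and the final character expansion — are routine. A minor technical point, flagged in the paper's own footnote, is that \cite{GHS} removes the $k$ largest character sums \emph{modulo $q$} whereas here we remove characters mod $q$ \emph{induced from} primitive characters of conductor $\leq \log x$; reconciling these requires the two-sided comparison between $S_f(x,\chi)$ and $S_f(x,\psi)$ furnished by the first two displays of Lemma \ref{Lem:Chars}, which shows the two notions of ``large character sum'' agree up to the acceptable factors.
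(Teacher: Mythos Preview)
Your proof of part (a) is correct and matches the paper's argument exactly.

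For part (b), your counting argument for $|\Xi(B,Q)| < J$ is on the right track (and the paper does indeed quote Theorem 6.1 of \cite{GHS} for this, after first passing from the primitive $\psi_j$ to characters $\chi_j$ modulo a common modulus $r = [r_1,\ldots,r_J]$ via the second display of Lemma \ref{Lem:Chars}). However, your derivation of \eqref{GHSprecise} contains a genuine gap. You write the per-character bound as $\tfrac{x\log\log x}{\varphi(q)(\log x)^B}$, but part (a) gives only $|S_f(x,\chi)| \ll \tfrac{x\log\log x}{(\log x)^B}$ with no $\varphi(q)$ in the denominator. Summing this over all $\varphi(q)$ characters $\chi \pmod q$ with $\chi \notin \Xi_q$ yields
\[
|\Delta_\Xi(f,x;q,a)| \leq \frac{1}{\varphi(q)} \cdot \varphi(q) \cdot \frac{x\log\log x}{(\log x)^B} = \frac{x\log\log x}{(\log x)^B},
\]
which is too weak by a factor of $\varphi(q)$. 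Since $q$ can be as large as $Q$, this is a real loss and cannot be absorbed into the $o(1)$.

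The paper avoids this by a different route: it first applies Theorem 6.1 of \cite{GHS} directly to the set $S$ of the $J-1$ characters $\chi \pmod q$ with the largest values of $|S_f(x,\chi)|$, obtaining
\[
|\Delta_S(f,x;q,a)| \ll \frac{1}{\varphi(q)} \cdot \frac{x}{(\log x)^{1-1/\sqrt{J}+o(1)}},
\]
which already carries the crucial $1/\varphi(q)$. One then compares $\Delta_\Xi$ with $\Delta_S$: the symmetric difference $D = S \triangle \Xi_q$ has at most $2(J-1) = O(1)$ elements, and for each $\chi \in D$ one argues $|S_f(x,\chi)| \ll x/(\log x)^{B+o(1)}$ (using part (a) for $\chi \in S\setminus\Xi_q$, and the defining property of $S$ together with the GHS bound for $\chi \in \Xi_q\setminus S$). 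Hence $|\Delta_\Xi - \Delta_S| \ll \tfrac{1}{\varphi(q)} \cdot |D| \cdot \tfrac{x}{(\log x)^{B+o(1)}}$, and adding the two estimates gives \eqref{GHSprecise}. In other words, what you flagged as a ``minor technical point'' (reconciling the GHS exceptional set $S$ with $\Xi_q$) is in fact the heart of the proof of \eqref{GHSprecise}; the $1/\varphi(q)$ saving must be imported from \cite{GHS} rather than manufactured from part (a).
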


\begin{proof} (a)\  If $\chi$ is induced from $\psi$ then $ \sigma_f(x,\psi) \leq 1/{(\log x)^B}$ by the hypothesis, 
and the result then follows from the first part of  Lemma \ref{Lem:Chars}.

(b)\ Suppose that there are at least $J$ characters $\psi_j \pmod {r_j}$ in $\Xi(B,Q)$. Let $r=[r_1,\ldots,r_J]$ so that $\log r = (\log x)^{o(1)}$, and let $\chi_j$ be the character mod $r$ induced by $\psi_j$, so that, for each $j$, there exists $x^{1/2}<X_j\leq x$ for which
$\sigma_f(X_j,\chi_j)\gg 1/ (\log x)^{B+o(1)}$ by  the second part of  Lemma \ref{Lem:Chars}. However, by Theorem 6.1 of \cite{GHS}, one of these is $\ll 1/(\log x)^{1-1/\sqrt{J}+o(1)}$, a contradiction.

Now Theorem 6.1 of \cite{GHS}, applied to the set $S$ of $J-1$ characters $\chi \pmod q$ which give the $J-1$ largest values of   $|S_f(x,\chi)|$, implies that
\[
|\Delta_{S}(f,x;q,a)| \ll \frac 1{\varphi(q)} \frac x {(\log x)^{1-1/\sqrt{J}+o(1)}} .
\]
Write $\Xi = \Xi(B, Q)$. Now $ |S_f(x,\chi)| \ll x/{(\log x)^{B+o(1)}}$ for every $\chi\in S \setminus \Xi_q$ by (a), and also for every $\chi\in \Xi_q\setminus S$ by the  definition of $S$, Theorem 6.1 of \cite{GHS}, and the hypothesis $B < 1-1/\sqrt{J}$. This implies that 
\[
|\Delta_{\Xi}(f,x;q,a) - \Delta_{S}(f,x;q,a)| \leq \frac 1{\varphi(q)} \sum_{\chi\in D} |S_f(x,\chi)|  \ll \frac 1{\varphi(q)} \frac x {(\log x)^{B+o(1)}},
\]
where $D$ is the symmetric difference of the sets $S$ and $\Xi_q$, and the result follows from adding the last two displayed equations.
 \end{proof}

 \begin{corollary} \label{MathResult2Cor**} Fix an integer $J\geq 2$, then $0<B<1-1/\sqrt{J}$ and let $w=(\log x)^{2B}$.    
 For any  $f\in \mathcal C$    we have
\[ 
 \frac 1{\log w}  \sum_{\substack{q\leq x \\ P(q)\leq w}} \max_{(a,q)=1} 
|\Delta_{\Xi(B,\log x)}(f,x;q,a)| \ll    \frac x {(\log x)^{B+o(1)}}.
\]
Moreover
\[ 
 \frac 1{\log w}  \sum_{\substack{q\leq x \\ P(q)\leq w}} \max_{(a,q)=1} 
|\Delta_{J-1}(f,x;q,a)| \ll    \frac x {(\log x)^{B + o(1)}}.
\]
 \end{corollary}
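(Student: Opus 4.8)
The plan is to first reduce the $\Delta_{J-1}$-bound to the $\Delta_{\Xi(B,\log x)}$-bound, and then to prove the latter by splitting the sum over $q$ at a suitable threshold. For the reduction: write $\Xi:=\Xi(B,\log x)=\{\psi_1,\dots,\psi_s\}$, where $s=|\Xi|<J$ by Corollary~\ref{No Exceptions}(b) (applied with $Q=\log x$, legitimate as $\log\log x=(\log x)^{o(1)}$ and $B<1-1/\sqrt J$). The extra characters removed in $\Delta_{J-1}$ are the $\chi_j\pmod q$ induced by $\psi_{s+1},\dots,\psi_{J-1}$, each of which has $\sigma_f(x,\psi_j)<(\log x)^{-B}$ and hence, by the first part of Lemma~\ref{Lem:Chars}, $|S_f(x,\chi_j)|\ll x(\log x)^{-B}\log\log x$; therefore
\[
\bigl|\Delta_{J-1}(f,x;q,a)-\Delta_\Xi(f,x;q,a)\bigr|\ \ll\ \frac1{\varphi(q)}\cdot\frac x{(\log x)^{B+o(1)}}\qquad((a,q)=1).
\]
Since $\log w=2B\log\log x$, the Euler product $\sum_{P(q)\le w}\varphi(q)^{-1}$ is $\asymp\log w$, so $\tfrac1{\log w}\sum_{P(q)\le w}\varphi(q)^{-1}\asymp1$ and summing the displayed bound costs only $\ll x/(\log x)^{B+o(1)}$. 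Thus it suffices to bound $\tfrac1{\log w}\sum_{q\le x,\,P(q)\le w}\max_{(a,q)=1}|\Delta_\Xi(f,x;q,a)|$.

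I would split that sum at $Y:=\exp((\log\log x)^2)$, chosen so that $\log Y=(\log x)^{o(1)}$ (so Corollary~\ref{No Exceptions}(b) applies with $Q=Y$) while $u:=\log Y/\log w=\tfrac{\log\log x}{2B}\to\infty$. For $Y<q\le x$ with $(a,q)=1$, use only the triangle inequality: the number of $n\le x$ with $n\equiv a\pmod q$ is $\le x/q+1$, and $|\Xi_q|\le|\Xi|<J$, $|S_f(x,\chi)|\le x$, $\varphi(q)\gg q/\log\log x$, whence $|\Delta_\Xi(f,x;q,a)|\ll x\log\log x/q$. Since $\log\log x/\log w\asymp1$, this range contributes $\ll x\sum_{q>Y,\,P(q)\le w}q^{-1}$; by the smooth-number estimate of Section~\ref{sec:smooth-prelim} (namely $\tfrac1{\log y}\sum_{n>T,\,P(n)\le y}n^{-1}=u^{-u+o(u)}+T^{-1+o(1)}$ with $T=y^u$), applied with $y=w$, $T=Y$, $u\to\infty$, this tail is smaller than any fixed power of $\log x$, in particular $\ll1/\log x$, so (using $B<1$) this range contributes $\ll x/(\log x)^{B+o(1)}$.

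For $q\le Y$ I would compare $\Xi$ with $\Xi':=\Xi(B,Y)\supseteq\Xi$ (again $|\Xi'|<J$). Any $\psi\in\Xi'$ of conductor $\le\log x$ lies in $\Xi$, so $\Xi'\setminus\Xi$ consists of primitive characters $\psi_1',\dots,\psi_m'$ ($m<J$) of conductors $r_1',\dots,r_m'\in(\log x,Y]$, which we may take $w$-smooth (the others divide no $w$-smooth modulus). As $\Xi_q=\Xi'_q\setminus\{\chi_{\psi_j',q}:r_j'\mid q\}$, with $\chi_{\psi,q}$ the character mod $q$ induced by $\psi$, we have for $(a,q)=1$
\[
\Delta_\Xi(f,x;q,a)=\Delta_{\Xi'}(f,x;q,a)+\frac1{\varphi(q)}\sum_{j:\,r_j'\mid q}\chi_{\psi_j',q}(a)\,S_f(x,\chi_{\psi_j',q}),
\]
so the $q\le Y$ part is at most $\tfrac1{\log w}\sum_{q\le Y,\,P(q)\le w}\max_a|\Delta_{\Xi'}(f,x;q,a)|$ plus $\sum_{j=1}^m\tfrac1{\log w}\sum_{r_j'\mid q\le Y,\,P(q)\le w}|S_f(x,\chi_{\psi_j',q})|/\varphi(q)$. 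The first piece is $\ll x/(\log x)^{B+o(1)}$ by Corollary~\ref{No Exceptions}(b) (with $Q=Y$) and $\tfrac1{\log w}\sum_{P(q)\le w}\varphi(q)^{-1}\asymp1$. The $j$-th term of the second piece is, by the fourth part of Lemma~\ref{Lem:Chars} (with $z=Q=Y$ and $C$ large) and the trivial bound $\sigma_f(x,z,\psi_j')\le1$, at most $\ll x/\varphi(r_j')\ll x/(\log x)^{1-o(1)}\ll x/(\log x)^{B+o(1)}$ --- here using $r_j'>\log x$, so $\varphi(r_j')\gg r_j'/\log\log r_j'$, and $B<1$ --- and there are $m<J$ of them. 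Adding the two ranges completes the proof.

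I expect the main obstacle to be conceptual rather than computational. The only bound on $\Delta_\Xi$ (or $\Delta_{J-1}$) that improves on the trivial $\ll x/\varphi(q)$ is Corollary~\ref{No Exceptions}(b), which rests on Theorem~6.1 of \cite{GHS} and is valid only for $\log q=(\log x)^{o(1)}$, whereas here $q$ runs up to $x$. One cannot instead expand $\Delta_\Xi$ into character sums and estimate termwise, because the per-character error $x/(\log x)^C$, summed over the $\varphi(q)$ characters mod $q$ and then over all $w$-smooth $q\le x$, carries a factor $\Psi(x,w)$, which is a positive power of $x$ as soon as $B>\tfrac12$. The threshold $Y=\exp((\log\log x)^2)$ is designed precisely so that $q\le Y$ is still logarithmically short (so the cancellation estimate applies) while $q>Y$ is sparse among $w$-smooth integers (because $u=\log Y/\log w\to\infty$); and the remaining delicate point --- reconciling the $q$-independent truncation $\Xi(B,\log x)$ with the slightly larger $\Xi(B,Y)$ one must remove in order to invoke Corollary~\ref{No Exceptions}(b) for $\log x<q\le Y$ --- is exactly what the fourth part of Lemma~\ref{Lem:Chars}, together with $\varphi(r)\gg r/\log\log r$, is there to absorb.
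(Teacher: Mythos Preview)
Your proof is correct and follows essentially the same architecture as the paper's: split at $R=Y=\exp((\log\log x)^2)$, handle $q>Y$ by the trivial bound $|\Delta_\Xi|\ll x/q$ together with the smooth-number tail estimate, handle $q\le Y$ via Corollary~\ref{No Exceptions}(b), and pass between $\Delta_{J-1}$ and $\Delta_\Xi$ using Lemma~\ref{Lem:Chars}.

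There is one point where you are in fact more careful than the paper. For $q\le Y$ the paper simply invokes Corollary~\ref{No Exceptions}(b) to bound $|\Delta_{\Xi}(f,x;q,a)|$ with $\Xi=\Xi(B,\log x)$, but as stated that corollary gives the pointwise bound only for $\Delta_{\Xi(B,Q)}$ with $q\le Q$, i.e.\ for $\Xi'=\Xi(B,Y)$ rather than $\Xi$. You correctly isolate the discrepancy $\Xi'\setminus\Xi$, observe that any such $\psi$ has ($w$-smooth) conductor $r>\log x$, and absorb its contribution via the fourth part of Lemma~\ref{Lem:Chars} together with $\varphi(r)\gg r/\log\log r\gg(\log x)^{1-o(1)}$ and $B<1$. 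This is exactly the missing sentence, and your treatment of it is clean. The only cosmetic difference is that in the reduction $\Delta_{J-1}\to\Delta_\Xi$ you bound each $|S_f(x,\chi_j)|$ pointwise (first part of Lemma~\ref{Lem:Chars}) and then sum, whereas the paper sums first and applies the fourth part; both give the same $x/(\log x)^{B+o(1)}$.
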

 
Here $P(q)$ denotes the largest prime factor of $q$. 
 
\begin{proof}  Let $\Xi= \Xi(B,\log x)$, which has no more than $J$ elements by Corollary \ref{No Exceptions}(b). 
%Therefore  \eqref{CharHyp} holds. 
We begin by bounding the contributions of the values of $q>R:=\exp((\log\log x)^2)$:
\[
\begin{split}
\sum_{\substack{R<q\leq x \\ P(q)\leq w}} \max_{(a,q)=1}  |\Delta_{\Xi}(f,x;q,a)| & \ll
\sum_{\substack{R<q\leq x \\ P(q)\leq w}}  \frac xq +
\sum_{\chi \in \Xi} \sum_{\substack{R<q\leq x \\ P(q)\leq w\\ r_\chi|q}}   \frac xq \\
&\ll   x   \sum_{\substack{q > R\\ P(q)\leq w }}   \frac 1q + x \sum_{\chi \in \Xi} \frac{1}{r_{\chi}} \sum_{\substack{R/r_{\chi} < n \leq x/r_{\chi} \\ P(n) \leq w}} \frac{1}{n},
\end{split}
\]
writing $q=nr_\chi$ in the second sum. The first term, and the contribution to the second term from those $\chi$ with $r_{\chi} \leq \sqrt{R}$ are both acceptable, by the estimates \eqref{tail sum} for smooth numbers. The contribution to the second term from those $\chi$ with $r_{\chi} > \sqrt{R}$ is
\[ \ll \frac{x  }{\sqrt{R}} \sum_{\substack{n \leq x\\ P(n)\leq w}} \frac{1}{n} \ll \frac{x \log w}{\sqrt{R}} \ll \frac{x}{(\log x)^B}, \]
which is also acceptable.

Finally, by Corollary \ref{No Exceptions}(b) we have that  
\[ 
 \frac 1{\log w}  \sum_{\substack{q\leq R \\ P(q)\leq w}} \max_{(a,q)=1} 
|\Delta_{\Xi}(f,x;q,a)| \ll  \frac 1{\log w}  \sum_{q: P(q) \leq w} \frac 1{\varphi(q)} \frac x {(\log x)^{B+o(1)}},
\]
and this is $\ll x/(\log x)^{B+o(1)}$ as $ \sum_{q: P(q)\leq w} 1/{\varphi(q)}\ll \log w$.
This completes the proof of the first part of the Corollary.

Now $\Xi= \{ \psi_1,\ldots, \psi_{k}\}$ for some $k<J$, by definition.   Therefore 
\[
  |\Delta_{J-1}(f,x;q,a)| \leq  |\Delta_{\Xi }(f,x;q,a)| + \frac 1{\varphi(q)}\sum_{\substack{k<j<J \\ r_j|q}} |S_f(x,\chi_j)| , 
\]
and so the result follows from summing this over the $w$-smooth moduli $q$, using   the last part of Lemma  \ref{Lem:Chars} for each $j$, along with the definition of $\Xi$. 
 \end{proof}

 %%%%%%%%%%%%%%%%
\subsection{Making use of the Siegel-Walfisz criterion}
%%%%%%%%%%%%%%%%
\begin{proposition} \label{Using SW}  Let $f\in \mathcal C$ and $Q \leq x$. For each $q \sim Q$ let $a_q\pmod q$ be a residue class with $(a_q,q) = 1$. Suppose that $\Xi$ is a set of primitive characters, containing $\ll   (\log x)^{C}$ elements,
such that 
 \[
 \sum_{q \sim Q}   \left|  \Delta_{\Xi}(f,x;q,a_q)  \right|  \ll \frac x{(\log x)^B} .
 \]
If the $D$-Siegel-Walfisz criterion holds for $f$, where $D\geq B+C$, then  
\[ 
\sum_{q \sim Q} \left|  \Delta(f,x;q,a_q)  \right|    \ll \frac x{(\log x)^B}. 
\]
\end{proposition}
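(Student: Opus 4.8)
The plan is to compare $\Delta(f,x;q,a_q)$ with $\Delta_\Xi(f,x;q,a_q)$ directly. Expanding both into characters modulo $q$ and using $\sum_{n\le x,\,(n,q)=1}f(n)=S_f(x,\chi_0)$ together with $\chi_0(a_q)=1$, one obtains, whenever $\chi_0\in\Xi_q$,
\[
\Delta(f,x;q,a_q)-\Delta_\Xi(f,x;q,a_q)=\frac1{\varphi(q)}\sum_{\substack{\chi\in\Xi_q\\ \chi\ne\chi_0}}\chi(a_q)\,S_f(x,\chi)
\]
(with an extra term $-\frac1{\varphi(q)}S_f(x,\chi_0)$ if $\chi_0\notin\Xi_q$; this case does not occur in our applications, and when it does either $\sigma_f(x,\chi_0)$ is large, so that the trivial character ought to have been adjoined to $\Xi$, or it is small, in which case $S_f(x,\chi_0)$ is small and this term is negligible after dividing by $\varphi(q)$ and summing over $q$). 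Thus it suffices to prove
\[
\sum_{q\sim Q}\frac1{\varphi(q)}\sum_{\substack{\chi\in\Xi_q\\ \chi\ne\chi_0}}|S_f(x,\chi)|\ \ll_D\ \frac{x}{(\log x)^B}.
\]

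I would next regroup the characters $\chi$ by the non-principal primitive character $\psi\pmod r$ which induces them, rewriting the left-hand side as $\sum_{\psi}\sum_{r\mid q\sim Q,\ \chi\text{ induced by }\psi}|S_f(x,\chi)|/\varphi(q)$, the outer sum running over the non-principal $\psi\in\Xi$. Since $\Xi$ has $\ll(\log x)^C$ elements and $D\ge B+C$, it is enough to bound the inner sum by $\ll_D x/(\log x)^D$ for each such $\psi$. The key input is that the Siegel-Walfisz criterion forces $\sigma_f$ to be tiny on characters of small conductor: for primitive $\psi\pmod r$ with $r\ge 2$ and any $X\ge 2$, orthogonality over residues modulo $r$ (the main terms cancelling since $\psi$ is non-principal) gives $S_f(X,\psi)=\sum_{(b,r)=1}\overline\psi(b)\,\Delta(f,X;r,b)$, and hence $|S_f(X,\psi)|\le\varphi(r)\max_{(b,r)=1}|\Delta(f,X;r,b)|\ll_D\varphi(r)X/(\log X)^D$ by the $D$-Siegel-Walfisz criterion. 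Choosing $z=\exp((\log\log x)^2)$ (admissible in Lemma~\ref{Lem:Chars}) we have $\log X\asymp\log x$ throughout $x/z<X\le x$, so that $\sigma_f(x,z,\psi)\ll_D\min\{1,\varphi(r)(\log x)^{-D}\}$.

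Plugging this into the third assertion of Lemma~\ref{Lem:Chars}, with the free constant there chosen to be $D$, gives for $2\le r\le Q$
\[
\sum_{\substack{r\mid q\sim Q\\ \chi\text{ induced by }\psi}}\frac{|S_f(x,\chi)|}{\varphi(q)}\ \ll_D\ \Bigl(\min\{1,\varphi(r)(\log x)^{-D}\}+(\log x)^{-D}\Bigr)\frac{x}{\varphi(r)}\ \ll_D\ \frac{x}{(\log x)^D};
\]
for $Q<r\le 2Q$ the only modulus occurring is $q=r$, and the single term is $|S_f(x,\psi)|/\varphi(r)\le\min\{x(\log x)^{-D},\,x/\varphi(r)\}\ll_D x/(\log x)^D$; for $r>2Q$ there is no contribution. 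Summing over the $\ll(\log x)^C$ characters of $\Xi$ gives a total $\ll_D(\log x)^C x/(\log x)^D\le x/(\log x)^B$, and combining this with the hypothesis $\sum_{q\sim Q}|\Delta_\Xi(f,x;q,a_q)|\ll x/(\log x)^B$ completes the proof.

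I expect the main obstacle to be this per-character estimate. Siegel-Walfisz cannot be applied to the moduli $q\sim Q$ themselves: there are far too many of them, and the accumulated error would be of size $\asymp Qx/(\log x)^D$, which is useless when $Q$ is large. So all of the saving must be extracted by applying it to the conductors $r$ of the few characters in $\Xi$, and then transported back to the moduli $q$ through the averaging already built into Lemma~\ref{Lem:Chars}. The subsidiary points to watch are the mismatch between $\log X$ and $\log x$ (absorbed by the choice of $z$), the few $\psi\in\Xi$ whose conductor is comparable with $Q$, and the role of the trivial character flagged in the first paragraph.
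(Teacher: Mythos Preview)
Your proposal is correct and follows essentially the same route as the paper: pass from $\Delta$ to $\Delta_\Xi$ via the characters in $\Xi_q\setminus\{\chi_0\}$, regroup by the primitive $\psi\in\Xi$ inducing each, invoke the third part of Lemma~\ref{Lem:Chars}, and bound $\sigma_f(x,\psi)/\varphi(r)\ll(\log x)^{-D}$ by applying the $D$-Siegel--Walfisz criterion at modulus $r$. Your treatment of edge cases (the $\chi_0\notin\Xi_q$ possibility, conductors $r$ comparable with $Q$, the explicit choice of $z$) is if anything more careful than the paper's, but the substance is identical.
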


  \begin{proof}     By definition we have
   \[
  | \Delta(f,x;q,a_q) |  \leq  | \Delta_{\Xi}(f,x;q,a_q) | + \frac 1{\varphi(q)}  \sum_{ \substack{\chi \pmod {q} \\ \chi \in  \Xi_{q},\ \chi\ne \chi_0 }} \left|  S_f(x,\chi) \right| .
\]
Summing this up over $q\sim Q$, and using the hypothesis, we deduce that
\[
 \sum_{\substack{ q\sim Q  }}   | \Delta(f,x;q,a_q) | 
  \leq 
 \sum_{\substack{ \psi\in \Xi \\ \psi\ne 1}}      \sum_{\substack{r_\psi|q\sim Q  \\ \chi \pmod q  \ \text{induced by } \psi } } \frac{|S_f(x,\chi)|}{\varphi(q)}  +O\left(  \frac{x} {(\log x)^{B}} \right).
\]
The third part of Lemma \ref{Lem:Chars} then implies that this is
\[
  \leq 
x  \sum_{\substack{ \psi\in \Xi \\ \psi\ne 1}} \frac{1}{\varphi(r)} \left( \sigma_f(x, \psi) + (\log x)^{-D} \right)   +O\left(  \frac{x} {(\log x)^{B}} \right).
\] 
  The $D$-Siegel-Walfisz criterion implies that for any non-principal $\psi\pmod{r}$, we have
\[
 \frac 1{\varphi(r)} S_f(X,\psi) =  \frac 1{\varphi(r)} \sum_{a \pmod r} \overline{\psi}(a) \Delta(f,X;r,a) \ll    \frac X{(\log x)^{D}} ,
\]
for $x^{1/2}<X\leq 2x$, and so $\sigma_f(x,\psi)/\varphi(r)\ll 1/(\log x)^{D}$. Therefore the above is 
\[
  \ll \# \Xi \cdot \frac x{(\log x)^{D}}  +   \frac x{(\log x)^B}  \ll  \frac x{(\log x)^B} 
  \]
provided $D\geq B+C$.   
\end{proof}

%%%%%%%%%%%%%%%%
\subsection{Lower bounds} \label{ClaimJusitify}
%%%%%%%%%%%%%%%%

If $\chi\not\in \Xi_q$ then 
\[
 S_f(x,\chi) =    \sum_{a \pmod q} \overline{\chi}(a) \Delta_\Xi(f,x;q,a)  ,
\]
and so
\[
\frac{ |S_f(x,\chi) |}{\varphi(q)} \leq  \max_{(a,q)=1} |\Delta_\Xi(f,x;q,a) |.
\]
Therefore
\[
\max_{(a,q)=1} |\Delta_\Xi(f,x;q,a) | \geq \max_{\chi\not\in \Xi_q} \frac{ |S_f(x,\chi) |}{\varphi(q)} ;
\]
in particular we deduce that for any primitive $\psi\not\in \Xi$ we have
\[
\sum_{q\sim Q} \max_{(a,q)=1} |\Delta_\Xi(f,x;q,a) | \geq \sum_{\substack{q\sim Q \\ \chi \text{ induced by } \psi}}    \frac{ |S_f(x,\chi) |}{\varphi(q)} .
\]
From the identity 
\[
S_f(x,\chi) = \sum_{m\leq x} h(m) S_f(x/m,\psi)
\]
(see the proof of Lemma~\ref{Lem:Chars})  we might expect that if $|S_f(x ,\psi)|$ is large then each $|S_f(x,\chi)|$ should be too, though this is difficult to prove for every induced $\chi$. However we can do so when the smallest prime factor of $q$ that does not divide $r$ is $>L \log x$, where 
$L:=x/|S_f(x,\psi)|$. Taking absolute values for such $\chi$, and remembering the support of $h(.)$, we have
\[
\begin{split}
|S_f(x,\chi) | &\geq |S_f(x ,\psi) | - \sum_{\substack{m> 1 \\ p|m\implies p|q \\ (m,r)=1}} |S_f(x/m,\psi)| \geq 
\frac xL - \sum_{\substack{m> 1 \\ p|m\implies p|q \\ (m,r)=1}} \frac xm\\
&\geq 
\frac xL - x \left( \prod_{p|q,\ p\nmid r} \left(1 - \frac 1p\right)^{-1} -1 \right)  \sim \frac xL= |S_f(x,\psi)|,\\
\end{split}
\]
since $q$ has $o(\log x)$ prime factors,
For such $q$  we also have $\varphi(q)\sim \varphi(r)q/r$. Therefore, if $|S_f(x,\chi)|$ is significantly larger than $(x\log x)/Q$ then
\[
\sum_{\substack{q\sim Q \\ \chi \text{ induced by } \psi}}    \frac{ |S_f(x,\chi) |}{\varphi(q)} \gg 
 \frac{ |S_f(x,\psi) |}{\varphi(r)}     \sum_{\substack{q\sim Q \\ q=rn \\ p|n\implies p>L \log x}}    \frac{ 1}{n} \gg 
 \frac{ |S_f(x,\psi) |}{\varphi(r)} \cdot \frac 1{\log (L\log x)} .
\]
Therefore if $|S_f(x,\psi)|\gg x/(\log x)^A$ for some  primitive $\psi$ then 
\begin{equation} \label{eq: 3.lbs}
\sum_{q\sim Q} \max_{(a,q)=1} |\Delta_\Xi(f,x;q,a) | \gg \frac{ |S_f(x,\psi) |}{\varphi(r)} \cdot \frac 1{\log\log x} .
\end{equation}
At worst, when all of the $|S_f(x,\psi)|$ with $\psi\not\in \Xi$ are small, we might expect (by orthogonality) that one has $|S_f(x,\psi) |\gg S_{|f|^2}(x,1) \gg \sqrt{x}$ for some primitive character $\psi \pmod r$, from which 
one deduces that 
\[
\sum_{q\sim Q} \max_{(a,q)=1} |\Delta_\Xi(f,x;q,a) | \gg \frac 1 {\log x}  \max_{\substack{\psi \pmod r \\ \psi \ \text{primitive} \\ \psi\not\in \Xi}} \frac{ |S_f(x,\psi) |}{\varphi(r)} .
\]

%%%%%%%%%%%%%%%%
%%%%%%%%%%%%%%%%
\section{Formulating the key technical result}
%%%%%%%%%%%%%%%%
%%%%%%%%%%%%%%%%
 If $\chi \pmod r$ is in $\Xi$, we write $r=r_\chi$, and note that    it induces a character mod $q$ if and only if $r$ divides $q$, and then the induced character is $\chi\xi_q$ where $\xi_q$ is the principal character mod $q$.
We have the upper bound $|S_f(x,\chi)|\leq \sum_{n\leq x,\ (n,q)=1} 1\ll (\varphi(q)/q) x$ for $x\geq q$. Therefore
$|\Delta_\Xi(f,x;q,a)|\ll \frac{1+|\Xi_q|}q x$ for $x\geq q$. Moreover 
\[
\Delta_\Xi(f,x;q,a) = \frac 1{\varphi(q)} \sum_{ \substack{\chi \pmod q \\ \chi \not\in   \Xi_q }} \chi(a) S_f(x,\chi).
\]

 \begin{corollary} \label{MathResult2Cor} Fix $\varepsilon>0$, let $Q \leq x^{1/2-\varepsilon}$, and let $w \geq 2$. Let $\Xi$ be a set of primitive characters, each with $w$-smooth conductors $\leq  Q/\exp((\log w)^2)$, such that
 \begin{equation} \label{CharHyp}
\sum_{\substack{\chi   \in \Xi}} \frac 1{r_\chi} \ll w^{1/2}.
 \end{equation}
 For any $1$-bounded multiplicative function $f$, we have
\[ 
\begin{split}
& \sum_{q \sim Q}   \max_{(a,q )=1}  \left| \Delta_{\Xi}(f,x;q,a) \right| \\
& \leq 
 \frac 1{\log w}  \sum_{\substack{q_s\leq 2Q \\ P(q_s)\leq w}} \max_{(a,q_s)=1} 
|\Delta_{\Xi}(f,x;q_s,a)|
 +O\left( \frac x{w^{1/2}} + \frac{x \log\log x}{\log x} \right) .
 \end{split}
 \]
 \end{corollary}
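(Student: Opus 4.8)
The statement compares a sum over all moduli $q\sim Q$ with a sum over only the $w$-smooth moduli $q_s \le 2Q$; the mechanism must be that the $w$-rough part $q_r$ of $q$ contributes a savings of $\log w$ (which is where the $1/\log w$ factor in front of the smooth sum comes from), together with an error from those $q$ whose rough part is large. The natural approach is to fix $q = q_s q_r$ with $q_s$ $w$-smooth and $q_r$ $w$-rough and coprime, pick the worst residue $a = a_q$, and expand $\Delta_\Xi(f,x;q,a_q)$ via its character sum representation
\[
\Delta_\Xi(f,x;q,a) = \frac 1{\varphi(q)} \sum_{\substack{\chi \pmod q \\ \chi \notin \Xi_q}} \chi(a) S_f(x,\chi).
\]

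\textbf{Step 1: reduce the rough part to the trivial bound unless it is small.} For $q$ with $q_r$ large — say $q_r > \exp((\log w)^2)$ or some threshold $Y$ chosen via \eqref{tail sum} — I would just use the crude bound $|\Delta_\Xi(f,x;q,a_q)| \ll (1+|\Xi_q|)x/q$ recorded just above the statement. Summing $x/q$ over such $q$: write $q = q_s q_r$, sum $1/q_s$ over $w$-smooth $q_s$ (giving $\ll \log w$), and sum $1/q_r$ over $w$-rough $q_r > Y$; by the smooth-number tail estimate this is small once $Y$ is a suitable power of $w$ times $\log\log x / \log\log\log\dots$, i.e. $Y = \exp(c(\log w)^2)$ suffices to make the whole contribution $\ll x/w^{1/2}$ or better. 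The contribution of the $\Xi_q$ characters here is handled the same way since $r_\chi$ divides $q$ and one sums $1/(q/r_\chi)$ over the complementary factor, using \eqref{CharHyp} to control the sum over $\chi \in \Xi$; this is why the hypothesis $\sum_\chi 1/r_\chi \ll w^{1/2}$ appears and why conductors are required $\le Q/\exp((\log w)^2)$, so that $q_r$ can genuinely range.

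\textbf{Step 2: the small-rough-part main term.} For $q$ with $q_r \le Y$, each character $\chi \pmod q$ factors as $\chi = \chi_s \chi_r$ with $\chi_s \pmod{q_s}$ and $\chi_r \pmod{q_r}$. The idea is that $S_f(x,\chi)$ should essentially only ``see'' the smooth part: one writes $\Delta_\Xi(f,x;q,a_q)$ and tries to replace it by $\Delta_\Xi(f,x;q_s,a_q')$ for the reduction $a_q' \pmod{q_s}$, up to an acceptable error. The cleanest way is to observe that
\[
\frac 1{\varphi(q)} \sum_{\substack{\chi \pmod q \\ \chi\notin\Xi_q}} \chi(a) S_f(x,\chi)
= \frac 1{\varphi(q_r)}\sum_{\chi_r \pmod{q_r}} \chi_r(a) \cdot \frac 1{\varphi(q_s)}\sum_{\substack{\chi_s \pmod{q_s}\\ \chi_s\chi_r\notin\Xi_q}} \chi_s(a) S_f(x,\chi_s\chi_r),
\]
and then for the inner average over $\chi_s$, the character $\chi_s \chi_r$ is in $\Xi_q$ only when $\chi_r$ is itself induced from the rough part of some $\psi \in \Xi$ — but $\psi$ has $w$-smooth conductor, so $\chi_r$ must be trivial, i.e. the $\Xi_q$ constraint only bites in the $\chi_r = \chi_0$ term, which recovers exactly $\Delta_\Xi(f,x;q_s,\cdot)$. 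For the $\chi_r \ne \chi_0$ terms one has no exceptional-character protection and must bound $\sum_{\chi_s} |S_f(x,\chi_s\chi_r)|$ trivially or near-trivially; summed against $1/\varphi(q_r)$ and then over $q_s \le 2Q/q_r$ and over $1 < q_r \le Y$, this should be absorbed into the $O(x\log\log x/\log x)$ term. Finally, summing the main term over $q_r \le Y$ contributes a factor $\sum_{q_r \le Y,\ w\text{-rough}} 1/\varphi(q_r) \asymp \log w$ (the count of $w$-rough integers up to $Y = \exp(c(\log w)^2)$, weighted by $1/\varphi$, is $\asymp \log Y / (\log w) \cdot$ something — more carefully $\prod_{w<p}(1 + 1/(p-1))$ truncated, giving $\asymp \log w$), matching the $1/\log w$ prefactor on the right-hand side, and replaces $\max_{(a,q)=1}$ by $\max_{(a,q_s)=1}$ after absorbing the dependence of $a_q$ on the rough part into the max over $q_s$.

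\textbf{Main obstacle.} The delicate point is Step 2: bounding the off-diagonal terms $\chi_r \ne \chi_0$ without any cancellation from $f$. One cannot afford the fully trivial bound $|S_f(x,\chi_s\chi_r)| \le x$ for every $\chi_s$, since summing $\varphi(q_s)$ such terms over all $q_s \le 2Q/q_r$ would overshoot. The fix is that $Q \le x^{1/2-\varepsilon}$: the total number of pairs $(q_s, \chi_s)$ is $\ll \sum_{q_s\le 2Q}\varphi(q_s) \ll Q^2 \le x^{1-2\varepsilon}$, so even $|S_f(x,\chi)|\le x$ per character gives $\ll x^{2-2\varepsilon}/\varphi(q_r)$ which, divided by... — no: one instead keeps the $1/\varphi(q_s)$ weight and uses that on average over $\chi_s \pmod{q_s}$ one has $\sum_{\chi_s}|S_f(x,\chi_s\chi_r)|^2 \ll \varphi(q_s)(x^2/q_s + x)$ by orthogonality (large sieve / Parseval applied to the progression decomposition), hence $\frac{1}{\varphi(q_s)}\sum_{\chi_s}|S_f(x,\chi_s\chi_r)| \ll x/q_s^{1/2} + x^{1/2}$ by Cauchy–Schwarz; summing $x/q_s^{1/2}$ over $q_s \le 2Q$ gives $\ll x Q^{1/2} \ll x^{1-\varepsilon/2}$-type savings, and summing the resulting bound over $1<q_r\le Y$ against $1/\varphi(q_r)$ stays within $O(x\log\log x/\log x)$. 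Getting these exponents to line up — choosing $Y$ large enough for Step 1 but small enough that $\sum_{q_r\le Y} 1/\varphi(q_r) \asymp \log w$ exactly rather than something larger — is the real bookkeeping, and is exactly why the threshold is $\exp((\log w)^2)$.
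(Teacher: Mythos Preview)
Your proposal has a genuine structural gap: it misses the key technical input, and both Steps contain errors that cannot be repaired within the framework you set up.

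\textbf{Step 1 is backwards.} You propose to handle $q$ with $q_r>Y$ by the trivial bound $|\Delta_\Xi|\ll(1+|\Xi_q|)x/q$, and claim that summing $1/q_r$ over $w$-rough $q_r>Y$ is small ``by the smooth-number tail estimate''. But $q_r$ is $w$-\emph{rough}, not $w$-smooth; the tail estimate \eqref{tail sum} does not apply, and in fact $\sum_{q_r>Y,\ p\mid q_r\Rightarrow p>w}1/q_r$ diverges. More fundamentally, for $w$ a power of $\log x$ a \emph{typical} $q\sim Q$ has $q_r$ of size essentially $q$, so your ``exceptional'' set $\{q_r>Y\}$ is actually almost all of $q\sim Q$, and the trivial bound there gives only $\sum_{q\sim Q}x/q\asymp x$.

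\textbf{Step 2 is far too weak.} Your Cauchy--Schwarz bound gives, for each $q$ with nontrivial $\chi_r$, a contribution $\ll x/q_s^{1/2}+x^{1/2}$. Even restricting to $q_r\le Y$ (so $q_s\ge Q/Y$), summing $x/q_s^{1/2}$ over the relevant $q\sim Q$ yields something of order $xQ^{1/2}$ or larger, nowhere near $x\log\log x/\log x$. The point is that the quantity you are trying to bound in Step~2 is essentially
\[
\sum_{q\sim Q}\max_{(a,q)=1}\left|\sum_{\substack{n\le x\\ n\equiv a\ (q)}}f(n)-\frac{1}{q_r}\sum_{\substack{n\le x\\ n\equiv a\ (q_s)}}f(n)\right|,
\]
which is exactly Theorem~\ref{MathResult2}. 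This cannot be handled by orthogonality and Cauchy--Schwarz alone; the paper proves it via Ramar\'e's identity (Proposition~\ref{prop:ramare}), splitting into a sieve term and a bilinear term, the latter bounded by a combinatorial analysis of congruence conditions on pairs $(q,q')$ (Lemma~\ref{BilinBound}). That bilinear input is the real content, and it is entirely absent from your sketch.

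The correct skeleton is: write
\[
\Delta_\Xi(f,x;q,a)=\frac{1}{q_r}\Delta_\Xi(f,x;q_s,a)+\Bigl(\text{the Theorem~\ref{MathResult2} discrepancy}\Bigr)+\Bigl(\text{corrections for }\Xi\Bigr),
\]
sum the first term over $q=q_sq_r\sim Q$ using $\sum_{q_r\sim Q/q_s,\ w\text{-rough}}1/q_r\ll 1/\log w$ (this is where the $1/\log w$ comes from, and one separately disposes of the boundary case $q_r=1$, i.e.\ $q\sim Q$ entirely $w$-smooth, via \eqref{CharHyp} and the conductor bound), invoke Theorem~\ref{MathResult2} for the second term, and bound the $\Xi$-corrections directly using $|S_f(x,\psi)-S_f(x,\psi\xi_q)|$ and $|1/\varphi(q)-1/(q_r\varphi(q_s))|$ estimates together with \eqref{CharHyp}.
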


The  proof can be modified to allow any $Q  \leq x^{1/2}/2$, though we would need to replace the $(\log\log x)/\log x$ by 
$(\log\log x)/\log (x/Q^2)$ on the right-hand side.

 As we will justify below, Corollary  \ref{MathResult2Cor} is a consequence of:

\begin{theorem} \label{MathResult2} Fix $\varepsilon>0$, let $Q \leq x^{1/2-\varepsilon}$, and let $w \geq 2$.
For any $1$-bounded multiplicative function $f$, we have
%\[ \E_{q \sim Q} \max_{(a,q)=1} \left|\E_{n \leq x, n \equiv a\pmod{q}} f(n) -\E_{n \leq x, n \equiv a\pmod{q_s}} f(n)\right| \ll w^{-1/2} + \frac{\log\log x}{\log x}. \]
 \[ 
 \sum_{q \sim Q} \max_{(a,q)=1} \left|\sum_{\substack{n \leq x\\ n \equiv a\pmod{q}}}  f(n) - \frac 1{q_r}\sum_{\substack{n \leq x\\ n \equiv a\pmod{q_s}}} f(n)\right| \ll \frac x{w^{1/2}} + \frac{x \log\log x}{\log x},
 \]
 where $q_s,q_r$ are the $w$-smooth and the $w$-rough parts of $q$, respectively.
\end{theorem}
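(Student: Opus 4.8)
The plan is to split each modulus $q\sim Q$ as $q=q_sq_r$ into its $w$-smooth and $w$-rough parts, and to estimate the discrepancy between counting $f(n)$ over $n\equiv a\pmod q$ and the ``expected'' main term $\frac1{q_r}\sum_{n\equiv a\,(q_s)}f(n)$ by exploiting that the $w$-rough modulus $q_r$ is, in an appropriate sense, too large or too structured to affect the distribution. First I would write
\[
\sum_{\substack{n\leq x\\ n\equiv a\,(q)}}f(n)-\frac1{q_r}\sum_{\substack{n\leq x\\ n\equiv a\,(q_s)}}f(n)
=\frac1{\varphi(q_r)}\sum_{\substack{\chi\bmod q_r\\ \chi\ne\chi_0}}\chi(ab)\!\!\sum_{\substack{n\leq x\\ n\equiv a\,(q_s)}}\!\!f(n)\chi(n)
\]
by orthogonality over the $q_r$-aspect (writing the progression mod $q$ as the progression mod $q_s$ refined by a progression mod $q_r$, and using $(q_s,q_r)=1$ via CRT so that $b$ denotes the inverse of $q_s$ modulo $q_r$). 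The point is that the main term has been chosen exactly so that only the \emph{non-principal} characters modulo $q_r$ survive, and these see the $w$-rough structure.

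The next step is to average the resulting expression over $q_r$, i.e. over $w$-rough integers in a range governed by $Q$ and $q_s$. Here the key input is a large-sieve / Bombieri--Vinogradov-type bound for the bilinear sums $\sum_{n\leq x}f(n)\chi(n)\mathbf 1_{n\equiv a\,(q_s)}$ as $\chi$ ranges over primitive characters of $w$-rough conductor: because the conductor has all prime factors $>w$ and $Q\leq x^{1/2-\varepsilon}$, the standard multiplicative large sieve (applied after removing the $q_s$-congruence condition, which costs a factor $\varphi(q_s)/q_s$ on average and is absorbed into the $1/\log w$ normalisation implicit in the $w$-smooth side) gives square-root cancellation with the crucial feature that the ``diagonal'' contribution is small — the diagonal terms would correspond to $\chi$ principal, which we have excluded. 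This is where the savings $x/w^{1/2}$ and $x\log\log x/\log x$ come from: $x/w^{1/2}$ from the off-diagonal large-sieve estimate (the conductors being $w$-rough forces any equality of characters to involve conductor $\gtrsim w$, hence the $w^{1/2}$), and $x\log\log x/\log x$ from the trivial bound on the very smooth $q$ (those with $q_r$ tiny or $q_r=1$) together with the elementary fact $\sum_{P(q)\le w}1/q\ll\log w$ and Mertens-type estimates, which is exactly the regime the statement isolates by comparing against the $w$-smooth sum.

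The main obstacle I expect is controlling the interaction between the $q_s$-congruence condition and the character sum in $\chi\bmod q_r$ uniformly over $a$: one cannot simply drop $\mathbf 1_{n\equiv a\,(q_s)}$ because its Fourier expansion over characters mod $q_s$ reintroduces all of $\Xi$-type obstructions, but one also cannot keep it naively since $q_s$ may itself be as large as $x^{o(1)}$. The resolution should be to detect the $q_s$-congruence by additive characters (Ramanujan-sum / Gauss-sum expansion) or, better, to fold $q_s$ into the ``known'' part: since $q_s$ is $w$-smooth and $w$ is a power of $\log x$, there are only $x^{o(1)}$ relevant smooth moduli, and one can take the maximum over $a\bmod q_s$ inside and sum over $q_s$ separately, paying the $1/\log w$ factor that appears on the right-hand side of Corollary~\ref{MathResult2Cor}. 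Concretely, after the character expansion mod $q_r$ one is left with $\sum_{\substack{n\le x\\ n\equiv a\,(q_s)}}f(n)\chi(n)$; writing $g(n)=f(n)\mathbf 1_{n\equiv a\,(q_s)}$ this is a $1$-bounded (not multiplicative) sequence twisted by a primitive character of $w$-rough conductor, and the large sieve
\[
\sum_{r_\chi\le R}\ \frac{r_\chi}{\varphi(r_\chi)}\!\!\sum_{\chi\bmod r_\chi}^{*}\Big|\sum_{n\le x}g(n)\chi(n)\Big|^2\ll (x+R^2)\sum_{n\le x}|g(n)|^2
\]
applies with $R\asymp Q$, giving $\ll x\cdot\#\{n\le x:\,n\equiv a\,(q_s)\}\ll x^2/q_s$ on the right; dividing by $\varphi(q_r)$, summing $q_r$ over $w$-rough integers with $Q/q_s<q_r\le 2Q/q_s$, and using Cauchy--Schwarz in the $q_r$-sum, the $w$-roughness makes $\sum_{q_r\text{ }w\text{-rough}}1/\varphi(q_r)^2$ over a dyadic range contribute the extra $1/w$, yielding the claimed $x/w^{1/2}$ after taking the square root; the secondary term $x\log\log x/\log x$ is what survives from the ranges where the large sieve is lossy (very small $q_r$), handled by the trivial estimate and Mertens. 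Assembling these, and noting the $\max_{(a,q)=1}$ costs nothing beyond what has already been accounted for since the character expansion is uniform in $a$, completes the argument.
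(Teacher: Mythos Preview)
Your approach has a genuine gap: it never uses the multiplicativity of $f$, and for an arbitrary $1$-bounded sequence the large sieve gives bounds that are far too weak. Concretely, after your character expansion you are bounding
\[
\sum_{q_r\sim Q/q_s}\frac{1}{\varphi(q_r)}\sum_{\substack{\chi\bmod q_r\\ \chi\ne\chi_0}}\Bigl|\sum_{\substack{n\le x\\ n\equiv a\,(q_s)}}f(n)\chi(n)\Bigr|,
\]
and any Cauchy--Schwarz over the pair $(q_r,\chi)$ introduces a factor of order $\varphi(q_r)^{1/2}\asymp (Q/q_s)^{1/2}$ that the large sieve bound $\sum_\chi|S_g(\chi)|^2\ll x^2/q_s$ cannot absorb: the outcome is of size $\asymp xQ^{1/2}/(q_s w^{1/2})$ rather than $x/w^{1/2}$, worse than the trivial bound once you sum over $q_s$. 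Your claim that the $w$-roughness forces $\sum_{q_r}1/\varphi(q_r)^2$ over a dyadic range to be $\asymp 1/w$ is also not right: that sum is $\asymp 1/(R_0\log w)$ with $R_0=Q/q_s$, which is not the same thing and in any case does not save the argument.

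The paper's proof is structurally different and hinges precisely on multiplicativity. One writes the left-hand side as $\sum_{n\le x}f(n)F(n)$ for the explicit function $F$ in \eqref{F-value}, and then applies a Ramar\'e-type identity (Proposition~\ref{prop:ramare}) to extract a prime factor $p\in[Y,Z)$ of $n$: since $f(pm)=f(p)f(m)$ for $(p,m)=1$, this produces a bilinear form $\sum_m w(m)f(m)\sum_p f(p)F(pm)$. Cauchy--Schwarz in $m$ then eliminates $f$ entirely, leaving the averaged bilinear sums $\E_{p,p'}\bigl|\sum_m F(pm)\overline{F(p'm)}\bigr|$ which are estimated by direct counting (Lemma~\ref{BilinBound}); the $1/(w\log w)$ arises from the condition $(q_r,q_r')>1$ forcing a shared prime $>w$, and the $(\log\log x)/\log x$ comes from the sieve term (Lemma~\ref{sievebound}) bounding the contribution of $n$ with no prime factor in $[Y,Z)$. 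The point is that the saving comes from cancellation in $F$ after squaring, not from cancellation in a character sum twisted by $f$.
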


This will be proved in Section~\ref{BilinBd}.

 \begin{proof}  [Deduction of Corollary~\ref{MathResult2Cor} from Theorem~\ref{MathResult2}] 
Note that if $\psi\pmod r \in \Xi$ induces a character in $\Xi_q$, then it also induces a character in $\Xi_{q_s}$, since $r$ is $w$-smooth. It follows that
\[ \Delta_{\Xi}(f,x;q,a) = \sum_{\substack{n \leq x \\ n \equiv a\pmod{q}}} f(n) - \frac{1}{\varphi(q)} \sum_{\psi \in \Xi_{q_s}} \psi(a) S_f(x, \psi\xi_q), \]
where $\xi_q$ is the principal character mod $q$. Combining this with the definition of $\Delta_{\Xi}(f,x;q_s,a)$ we arrive at the identity 
 \[ 
\begin{split}
\Delta_{\Xi}(f,x;q,a) &= \frac 1{q_r}\Delta_{\Xi}(f,x;q_s,a)  + \sum_{\substack{n \leq x\\ n \equiv a\pmod{q}}}  f(n) - \frac 1{q_r}\sum_{\substack{n \leq x\\ n \equiv a\pmod{q_s}}} f(n) \\
+   \sum_{\psi\in \Xi_{q_s}}  &\psi(a)   \left(   \frac{S_f(x,\psi)-S_f(x,\psi\xi_q)}{ \varphi(q)} - \left(  \frac{1}{ \varphi(q)} -  \frac{1}{q_r\varphi(q_s)}  \right)S_f(x,\psi)  \right).
\end{split}
\]
We sum the absolute value of  this up over each $q\sim Q$ with $a=a_q$ which maximizes $|\Delta_{\Xi}(f,x;q,a)|$. The first term on the right-hand side is then, summing up over $q=q_rq_s\sim Q$,
\[
\begin{split}
& \leq     \sum_{\substack{q_s\leq 2Q \\ P(q_s)\leq w}} \max_{(a,q_s)=1} 
|\Delta_{\Xi}(f,x;q_s,a)| \sum_{\substack{q_r\sim Q/q_s\\ p|q_r\implies p>w}} \frac 1{q_r}\\
&\ll \frac 1{\log w}  \sum_{\substack{q_s\leq 2Q \\ P(q_s)\leq w}} \max_{(a,q_s)=1} 
|\Delta_{\Xi}(f,x;q_s,a)| +  \sum_{\substack{q_s\sim Q \\ P(q_s)\leq w}} \max_{(a,q_s)=1} 
|\Delta_{\Xi}(f,x;q_s,a)| .
\end{split}
 \]
The last term comes from those $q$ with $q_r=1$, in which case each $|\Delta_{\Xi}(f,x;q_s,a)|\ll(1+|\Xi_q|)\cdot x/ q_s$, 
and so
\[
\begin{split}
  \sum_{\substack{q_s\sim Q \\ P(q_s)\leq w}} & \max_{(a,q_s)=1} |\Delta_{\Xi}(f,x;q_s,a)|
   \ll 
\sum_{\substack{q_s\sim Q \\ P(q_s)\leq w}} \frac{x}{q_s}+ \sum_{\chi \pmod r \in \Xi} \sum_{\substack{q_s\sim Q \\ P(q_s)\leq w\\ r|q_s}}  \frac x{q_s} \\
&    \ll x  v^{-v+o(v)} + \sum_{\substack{\chi \pmod r \in \Xi\\ P(r)\leq w}} \frac xr \sum_{\substack{n\sim Q/r \\ P(n)\leq w }}  \frac 1{n} \ll \left( 1 + \sum_{\substack{\chi   \in \Xi}} \frac 1{r_\chi} \right) xv^{-v+o(v)} 
\end{split}
\]
 writing $q_s=rn$ in the second sum, as $Q/r\geq  \exp((\log w)^2)=w^v$, say,
and this term is $ \ll  x/{w^{1/2}} $ by \eqref{CharHyp}.

By Theorem \ref{MathResult2} we have
\[ 
\sum_{q \sim Q} \max_{(a,q)=1} \left|\sum_{\substack{n \leq x\\ n \equiv a\pmod{q}}}  f(n) - \frac 1{q_r}\sum_{\substack{n \leq x\\ n \equiv a\pmod{q_s}}} f(n)\right| \ll \frac x{w^{1/2}} + \frac{x \log\log x}{\log x}.
\]
  
 Now since $|S_f(x,\psi)|\leq \sum_{n\leq x,\ (n,q_s)=1} 1 \ll (\varphi(q_s)/q_s)x$, we have  
\[
 \left(  \frac{1}{ \varphi(q)} -  \frac{1}{q_r\varphi(q_s)}  \right)S_f(x,\psi)   \ll  \left( \frac {q_r}{\varphi(q_r) } - 1\right)  \frac 1{q_r\varphi(q_s)} \cdot \frac  {\varphi(q_s)}{q_s} x=    \left( \frac {q_r}{\varphi(q_r)} - 1 \right) \frac x{q}.
 \]
 Moreover, writing $n=ab$ where $p|a\implies p|q$ and $(b,q)=1$, we have 
 \[
  \begin{split}
  \left|\frac{S_f(x,\psi)-S_f(x,\psi\xi_q)}{ \varphi(q)}\right| &=  \frac{1}{ \varphi(q)}\left|\sum_{\substack{ab\leq x \\ a>1 }}  (f\overline\psi)(a)(f\overline\psi)(b)\right| \leq   \frac{1}{ \varphi(q)}\sum_{\substack{1<a \leq x  \\ p|a \implies p|q_r}}    \sum_{ \substack{b\leq x/a \\ (b,q)=1}}  1  \\
  &\ll   \frac{1}{ \varphi(q)}\sum_{\substack{1<a \leq x/q  \\ p|a \implies p|q_r}}    \frac{\varphi(q)}{q} \frac xa+ \frac{1}{ \varphi(q)}\sum_{\substack{  x/q<a\leq x  \\ p|a \implies p|q_r}}    \frac xa \\
   &\ll   \left( \frac {q_r}{\varphi(q_r)} - 1 \right) \frac x{q} + x^{o(1)}  \\
   \end{split}
 \]
since there are $x^{o(1)}$  integers $a$ in an interval $[X,2X]$ with $X\geq x/q$, all of whose prime factors divide $q_r$ (to see this note that the worst case is when $q_r$ is the product of all the primes $\ll \log x$, and then this easily follows from our estimates for smooth numbers).

Therefore
\[
\begin{split}
& \sum_{q\sim Q} \left|  \sum_{\psi\in \Xi_{q_s}}  \psi(a)   \left(   \frac{S_f(x,\psi)-S_f(x,\psi\xi_q)}{ \varphi(q)} - \left(  \frac{1}{ \varphi(q)} -  \frac{1}{q_r\varphi(q_s)}  \right)S_f(x,\psi)  \right)  \right| \\
& \ll
\sum_{\chi\in \Xi} \sum_{\substack{q\sim Q \\ r_\chi|q}}    \left( \frac {q_r}{\varphi(q_r)} - 1 \right) \frac x{q} + \sum_{\chi \in \Xi} \sum_{\substack{q \sim Q \\ r_{\chi}|q}} x^{o(1)} \\
& = \sum_{\chi\in \Xi} \sum_{\substack{ q_s\leq 2Q \\ r_\chi|q_s}} \frac 1{q_s} 
\sum_{\substack{q_r\sim Q/q_s }}    \left( \frac {q_r}{\varphi(q_r)} - 1 \right) \frac x{q_r} + \sum_{\chi\in \Xi} \frac{Qx^{o(1)} }{r_\chi}\\
& \ll \sum_{\chi\in \Xi} \frac{\log w}{ r_\chi} \cdot
\frac x{w\log w} + \sum_{\chi\in \Xi} \frac{Qx^{o(1)} }{r_\chi} \ll \frac x{w} \cdot  \sum_{\chi\in \Xi} \frac{1}{ r_\chi}  \ll \frac x{w^{1/2}} ,
\end{split}
\]
 since  $\sum_{\substack{ n\leq Q/r \\ P(n)\leq w}} 1/n\ll \log w $ where $q_s=r_\chi n$, and for $Q_s=Q/q_s$  we have
 \[ 
 \begin{split}
  \sum_{q_r \sim Q_s} & \left( \frac {q_r}{\varphi(q_r)} - 1 \right) \frac x{q_r}    = \sum_{q_r \sim Q_s} \frac x{q_r} \sum_{\substack{d>1 \\ d|q_r}} \frac{\mu^2(d)}{\varphi(d)} 
  = \sum_{\substack{ 1<d\leq 2Q_s\\ p|d\implies p>w}}  \frac{\mu^2(d)}{\varphi(d)}  \sum_{\substack{q_r \sim Q _s\\ d|q_r}} \frac x{q_r} \\ &\ll
x \sum_{\substack{ 1<d\leq 2Q_s\\ p|d\implies p>w}}  \frac{\mu^2(d)}{d\varphi(d)}  
 \leq x \left(   \prod_{p>w}   \left(  1+\frac 1{p(p-1)}  \right)  -1 \right) \ll \frac x{w\log w}.
 \end{split}
 \]
Collecting up the estimates above the result follows.
 \end{proof}

%%%%%%%%%%%%%%%%
\subsection{Using Ramar\'{e}'s weights} 
%%%%%%%%%%%%%%%%

Let $a_q \pmod q$ be the arithmetic progression with $(a,q)=1$ for which 
 \[  \left|\sum_{\substack{n \leq x \\ n \equiv a\pmod{q}}} f(n) - \frac{1}{q_r} \sum_{\substack{n \leq x \\ n\equiv a\pmod{q_s}}} f(n)\right| \]
 is maximized, and then select $\xi_q$ with $|\xi_q|=1$ so that this equals
 \[ \xi_q \left( \sum_{\substack{n \leq x \\ n \equiv a\pmod{q}}} f(n) - \frac{1}{q_r} \sum_{\substack{n \leq x \\ n\equiv a\pmod{q_s}}} f(n) \right) . \]
 Therefore the left-hand side of the equation in Theorem \ref{MathResult2} can be re-written as 
 \[  \sum_{n \leq x} f(n) F(n) \]
 where, here and throughout, the function $F$ is defined by
\begin{equation} \label{F-value}
 F(n) := \sum_{q\sim Q} \xi_q \left(\1_{n \equiv a_q\pmod{q}} - \frac{1}{q_r} \1_{n\equiv a_q\pmod{q_s}}\right). 
 \end{equation}

Sums like 
\[ \sum_{n \leq x} f(n) F(n)  \]
have been tackled in the literature using the Cauchy-Schwarz inequality and then studying bilinear sums, such  as what happens after  (17) in \cite{MV}. Here we develop a formal inequality, which is a variant of  Proposition 2.2 of \cite{Gre}.   The use of Ramar\'{e}'s identity in this context was pioneered by Matom\"{a}ki and Radziw\l\l~\cite{MR}.

\begin{proposition}\label{prop:ramare}
Let $F: \Z \to \C$ be an arbitrary function. Let $2 \leq Y < Z < x^{1/9}$ be parameters,  and write $u = (\log Z)/\log Y$. Then for any $1$-bounded multiplicative function $f$, we have
\[ \sum_{Z^9 \leq n \leq x} f(n) F(n) \ll \frac{T}{Y \log Y} + E_{\text{sieve}} + E_{\text{bilinear}}, \]
where
\[ T = \max_{d \leq Z^2} d \sum_{\substack{n \leq x \\ d\mid n}} |F(n)|, \]
\[ E_{\text{sieve}} = \sum_{n \leq x} |f(n)F(n)| \1_{(n,\prod_{Y \leq p < Z}p) = 1}, \]
and
\[ E_{\text{bilinear}} = \max_{P \in [Y,Z)} \left(Px \cdot \E_{p,p' \sim P} \left| \sum_{m \leq \min(x/p, x/p')} F(pm) \overline{F(p'm)} \right|\right)^{1/2}. \]
\end{proposition}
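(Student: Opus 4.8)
The plan is to exploit Ramar\'e's identity for a lower-bound sieve: every integer $n$ with a prime factor in $[Y,Z)$ can be written as $n = pm$ with $Y \le p < Z$ and $p \mid n$, and counted with a weight $1/\omega_{[Y,Z)}(m,n)$ that makes the total contribution exactly $1$. Concretely, introduce for each $n$ with $(n,\prod_{Y\le p<Z}p)>1$ the Ramar\'e weight, so that
\[
\1_{(n,\prod_{Y\le p<Z}p)>1} = \sum_{\substack{Y\le p<Z\\ p\mid n}} \frac{1}{\#\{Y\le q<Z:\ q\mid n\}}.
\]
First I would split $\sum_{Z^9\le n\le x} f(n)F(n)$ according to whether $n$ has a prime factor in $[Y,Z)$ or not; the numbers with no such factor are exactly those counted by $E_{\text{sieve}}$, so that piece is immediately absorbed. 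For the remaining $n$, substitute $n = pm$ via the identity above. The number of prime factors of $n$ in $[Y,Z)$ is $O(u)$ essentially always (here $u=(\log Z)/\log Y$), but one needs to handle the rare $n$ with abnormally many such prime factors; these can be dumped into $E_{\text{sieve}}$ or bounded trivially using $T$, since they are few. The main term becomes a bilinear-looking sum
\[
\sum_{Y\le p<Z} f(p) \sum_{\substack{m\\ pm\le x}} \frac{f(m)\,F(pm)}{\#\{Y\le q<Z:\ q\mid pm\}}
\]
up to controllable errors.

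The second step is to remove the annoying denominator. The standard trick (as in Proposition~2.2 of~\cite{Gre}) is to replace $1/\#\{\cdots\}$ by $1/u$ at the cost of an error governed by the variance of the number of prime factors in $[Y,Z)$; by a Tur\'an--Kubilius type estimate this variance is $O(u)$, so the discrepancy between the weighted count and $\frac1u\sum_{Y\le p<Z}$ contributes an acceptable amount, again bounded via $T$ (this is where the term $T/(Y\log Y)$ will ultimately come from, after noting $\sum_{Y\le p<Z}1/p \asymp \log u$ and dividing by $u\log Y$, roughly). After this reduction we are left with essentially
\[
\frac1u \sum_{Y\le p<Z} f(p) \sum_{\substack{m\le x/p}} f(m)\,F(pm),
\]
plus the sieve and the main-term errors already accounted for.

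The third step is Cauchy--Schwarz in the $p$-aspect, keeping $m$ inside. Pulling out $f(p)$ (which is $1$-bounded) and writing the sum over $p$ as an average over $p\sim P$ after a dyadic decomposition of $[Y,Z)$, Cauchy--Schwarz gives
\[
\Bigl|\sum_{p\sim P} f(p)\sum_{m\le x/p} f(m)F(pm)\Bigr|^2
\le \pi(2P)\sum_{p\sim P}\Bigl|\sum_{m\le x/p} f(m)F(pm)\Bigr|^2,
\]
and expanding the square and summing $m$ over $m\le\min(x/p,x/p')$ produces the diagonal term $p=p'$ (handled trivially by $T$, since it contributes $\sum_{p}\sum_{m}|F(pm)|^2 \ll \sum_{p} p^{-1}\cdot(\text{something controlled by }T)$, using $|f|\le1$ to drop $f$) and the off-diagonal term $p\ne p'$, which is exactly $E_{\text{bilinear}}$ after accounting for the normalizations: the factor $\pi(2P)\asymp P/\log P$ times the average over $p,p'\sim P$ of $|\sum_m F(pm)\overline{F(p'm)}|$, and a further factor $x$ from estimating trivially the part where the bilinear sum is summed against $f(m)\overline{f(m')}$ with $m\ne m'$ collapsed by opening $|f|\le1$. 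Taking square roots and summing over the $O(\log u)$ dyadic ranges $P$ inside $[Y,Z)$ — absorbing the $\log u$ into the implied constant or the $o$-terms — yields $E_{\text{bilinear}}$ as stated (the max over $P\in[Y,Z)$).

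The hard part is bookkeeping rather than ideas: getting the exponents in $T/(Y\log Y)$ and in $E_{\text{bilinear}}$ exactly right, which forces the precise choices in the Cauchy--Schwarz step (how much of the $m$-sum to keep bilinear versus estimate trivially with $|f|\le 1$), and controlling the two ``rare $n$'' error sets — integers with too many prime factors in $[Y,Z)$, and the interaction of the Ramar\'e weight with $m$ that itself has prime factors in $[Y,Z)$ — so that all of them land inside $E_{\text{sieve}}$ or are dominated by $T/(Y\log Y)$. The constraint $Z<x^{1/9}$ (equivalently the range $n\ge Z^9$) is what guarantees $m=n/p \ge Z^8$ is still large enough that the trivial bounds on the exceptional sets are genuinely negligible, and it is the reason the stated range of summation starts at $Z^9$.
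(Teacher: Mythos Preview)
Your Cauchy--Schwarz is applied in the wrong variable, and this is a genuine gap rather than a bookkeeping slip. You write
\[
\Bigl|\sum_{p\sim P} f(p)\sum_{m\le x/p} f(m)F(pm)\Bigr|^2
\le \pi(2P)\sum_{p\sim P}\Bigl|\sum_{m\le x/p} f(m)F(pm)\Bigr|^2,
\]
which is Cauchy over $p$; expanding the square on the right produces a sum over pairs $m,m'$ for a \emph{single} prime $p$, namely $\sum_p\sum_{m,m'} f(m)\overline{f(m')}F(pm)\overline{F(pm')}$. There is no $p'$ anywhere, so your subsequent talk of ``diagonal $p=p'$'' and ``off-diagonal $p\ne p'$'' does not make sense, and the resulting bilinear form is not the one in $E_{\text{bilinear}}$. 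To land on $E_{\text{bilinear}}$ (a sum over $p,p'$ with $m$ inside) you must Cauchy over $m$: write the sum as $\sum_m w(m)f(m)\bigl(\sum_{p} f(p)F(pm)\bigr)$, bound by $(\sum_m |w(m)|^2)^{1/2}(\sum_m|\sum_p f(p)F(pm)|^2)^{1/2}$, and then expand the second factor to get $\sum_{p,p'}\sum_m F(pm)\overline{F(p'm)}$.

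This also explains why your second step is off. The paper does \emph{not} remove the Ramar\'e weight by Tur\'an--Kubilius; it uses the ``$+1$'' version $w(m)=1/(\#\{Y\le p<Z:p\mid m\}+1)$, keeps it through the Cauchy step, and invokes the $L^2$ bound $\sum_{Z^8<m\le x/P} w(m)^2 \ll x/(P(\log u)^2)$ (Lemma~2.1 of~\cite{Gre}); the factor $(\log u)^{-1}$ gained here is exactly what absorbs the $O(\log u)$ dyadic ranges for $P$. Your replacement of $1/\omega(n)$ by $1/u$ is numerically wrong (the typical value of $\omega_{[Y,Z)}(n)$ is $\sum_{Y\le p<Z}1/p\asymp\log u$, not $u$), and in any case the $T/(Y\log Y)$ term does not arise from a Tur\'an--Kubilius variance: it comes from correcting for prime powers, i.e.\ replacing $n=mp^k$ by $n=m'p$ and bounding the discrepancy by $2\sum_{Y\le p<Z}\sum_{p^2\mid n}|F(n)|\le 2T\sum_{Y\le p<Z}p^{-2}\ll T/(Y\log Y)$.
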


\begin{proof} We use  Ramar{\'e}'s weight function:
\[ w(n) = \frac{1}{\#\{Y \leq p < Z: p \mid n\} + 1},\]
to obtain the identity
\[ 
\sum_{\substack{Y \leq p < Z \\ p^k\|  n,\ k\geq 1}} w(n/p^k) = \begin{cases} 1 & \text{if }p \mid n\text{ for some }Y \leq p < Z \\ 0 & \text{otherwise.} \end{cases} 
\]
Therefore, letting $F(n)=0$ for $n<Z^9$,
\[
\begin{split}
\left|\sum_{n \leq x} f(n) F(n)\right| &\leq E_{\text{sieve}} + \Big|\sum_{\substack{n \leq x }} f(n) F(n) \sum_{\substack{Y \leq p < Z \\ p^k\|n}} w(n/p^k)\Big|\\
&\leq E_{\text{sieve}} + \Big|\sum_{\substack{n=mp^k \leq x\\ Y \leq p < Z \\   (p,m)=1}} w(m)f(m) \cdot  f(p^k) F(mp^k)\Big|,
\end{split}
 \]
writing $n=mp^k$. We replace each term where $p^2|n$, currently written as $n=mp^k$, by $n$ written as $m'p^2$.  Given that $|w(.)|,|f(.)|\leq 1$ the difference is
\[
\leq 2\sum_{Y \leq p < Z} \sum_{\substack{n \leq x \\ p^2 \mid n}} |F(n)| \leq 2T \sum_{Y \leq p < Z} \frac{1}{p^2} \ll \frac{T}{Y\log Y}.
\]
Therefore,
\begin{equation}\label{eq:Sigma} 
\left|\sum_{n \leq x} f(n) F(n)\right| \leq |\Sigma'|  + E_{\text{sieve}} + O\left(   \frac{T}{Y\log Y}    \right)
\end{equation}
where
\[ 
\Sigma' := \sum_{m \leq x/Y} w(m) f(m) \sum_{\substack{Y \leq p < Z \\ p \leq x/m}} f(p) F(pm). 
\]
We  divide the range $[Y,Z)$ for $p$, dyadically, into
\[ \Sigma'(P) := \sum_{m \leq x/P} w(m) f(m) \sum_{\substack{P \leq p < 2P \\ p \leq x/m}} f(p) F(pm) \]
for $P \in [Y, Z)$. Since $F(pm)$ is supported on $pm \geq Z^9$, we may add the restriction $m \geq Z^8$ to the sum. By Lemma 2.1 of~\cite{Gre} we have
\[ \sum_{Z^8<m \leq x/P} w(m)^2 \ll \frac{x}{P (\log u)^2}. \]
Therefore, by the Cauchy-Schwarz inequality, we obtain
\[ |\Sigma'(P)|^2 \ll \frac{x}{P(\log u)^2} \sum_{m \leq x/P} \left|\sum_{\substack{P \leq p < 2P \\ p \leq x/m}} f(p) F(pm) \right|^2. \]
%Since $X^{1/2} \geq Z^8$, the $L^2$-norm of $w$ above can be bounded by $(X/P) (\log\log X)^{-2}$. 
After expanding the square and changing the order of summation, we obtain
\begin{align*} 
|\Sigma'(P)|^2 & \ll \frac{x}{P(\log u)^2} \sum_{P \leq p,p' < 2P} f(p) \overline{f(p')} \sum_{m \leq \min(x/p, x/p')} F(pm) \overline{F(p'm)} \\
& \ll \frac{x}{P(\log u)^2} \sum_{P \leq p,p' < 2P} \left| \sum_{m \leq \min(x/p, x/p')} F(pm) \overline{F(p'm)} \right| \\
& \ll \frac{x}{P (\log u)^2} \cdot \frac{\pi(P)^2}{Px} E_{\text{bilinear}}^2 = \left(\frac{E_{\text{bilinear}}}{(\log P)(\log u)}\right)^2.
\end{align*}
Taking square root and then summing dyadically over $P$, we obtain $\Sigma' \ll E_{\text{bilinear}}$. This completes the proof.
\end{proof}

%%%%%%%%%%%%%%%%%%%%%
\subsection{Bounding the sieve term and the bilinear term}  \label{BilinBd}
%%%%%%%%%%%%%%%%%%%%%
We now apply Proposition~\ref{prop:ramare} to the function $F$ in \eqref{F-value} to prove Theorem~\ref{MathResult2}. Let $2 \leq Y < Z < x^{1/9}$ be two parameters. Recall the notation that $u = (\log Z)/\log Y$.

\begin{lemma}[Sieve term] \label{sievebound}
We have
\[ \sum_{n \leq x} |F(n)| \1_{(n, \prod_{Y\leq p < Z}p)=1} \ll \frac{x}{u}. \]
\end{lemma}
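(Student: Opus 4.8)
The plan is to apply the triangle inequality to reduce the sum to counting elements of arithmetic progressions that avoid all prime factors in $[Y,Z)$, and then to estimate each such count by a standard upper-bound sieve. Since $|\xi_q|=1$ and $q_s\mid q$ (so the two indicators in \eqref{F-value} are both $1$ exactly when $n\equiv a_q\pmod q$), I would first bound
\[
|F(n)| \le \sum_{q\sim Q}\Bigl(\1_{n\equiv a_q\,(\bmod\ q)} + \tfrac{1}{q_r}\1_{n\equiv a_q\,(\bmod\ q_s)}\Bigr),
\]
so that it suffices to control, for each $q\sim Q$ and each modulus $m\in\{q,q_s\}$, the quantity $N_m:=\#\{n\le x:\ n\equiv a_q\,(\bmod\ m),\ (n,\prod_{Y\le p<Z}p)=1\}$, and then sum the contributions with the weights $1$ and $1/q_r$ respectively. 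Because $(a_q,q)=1$ by construction, hence also $(a_q,q_s)=1$, this is a genuine dimension-one sieve problem: for squarefree $d\mid\prod_{Y\le p<Z}p$ the set $\{n\le x:\ n\equiv a_q\,(\bmod\ m),\ d\mid n\}$ is empty when $(d,m)>1$ and otherwise has $x/(md)+O(1)$ elements.

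Next I would invoke the fundamental lemma of the sieve, with truncation parameter $D=Z^2$; this is legitimate since $Z^2<x^{2/9}<x/(2Q)\le x/m$, using $Q\le x^{1/2-\varepsilon}$ and $Z<x^{1/9}$. Together with Mertens' theorem (the estimate $\prod_{Y\le p<Z}(1-1/p)\asymp(\log Y)/\log Z = 1/u$) and the identity $\prod_{p\mid m}(1-1/p)^{-1}=m/\varphi(m)$, this gives
\[
N_m \ll \frac{x}{m}\prod_{\substack{Y\le p<Z\\ p\nmid m}}\Bigl(1-\tfrac1p\Bigr) + Z^2 \ll \frac{x}{u\,\varphi(m)} + Z^2 .
\]

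Summing over $q\sim Q$, the $m=q$ terms then contribute $\ll (x/u)\sum_{q\sim Q}1/\varphi(q)+QZ^2$, and the $m=q_s$ terms (with the weight $1/q_r$) contribute $\ll (x/u)\sum_{q\sim Q}1/(q_r\varphi(q_s))+Z^2\sum_{q\sim Q}1/q_r$. The main terms I would handle via two elementary facts: Landau's estimate $\sum_{q\le N}1/\varphi(q)=c_1\log N+c_2+o(1)$, which gives $\sum_{q\sim Q}1/\varphi(q)\asymp1$, and $q_s/\varphi(q_s)\le q/\varphi(q)$ (as $q_s\mid q$), whence $\sum_{q\sim Q}\frac{1}{q_r\varphi(q_s)}=\sum_{q\sim Q}\frac1q\cdot\frac{q_s}{\varphi(q_s)}\le\sum_{q\sim Q}\frac1{\varphi(q)}\ll1$; both main terms are thus $\ll x/u$. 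The error terms are negligible since $\sum_{q\sim Q}1/q_r\le Q$, so they are $\ll QZ^2\le x^{1/2-\varepsilon}\cdot x^{2/9}\ll x/\log x\ll x/u$ (recall $u\le(\log Z)/\log 2\ll\log x$). Adding these estimates gives the claim; and if $u<2$ the bound $\ll x/u$ is already immediate from $\sum_{n\le x}|F(n)|\ll x$, so we may assume $u\ge2$, which guarantees that $[Y,Z)$ contains primes and that the Mertens estimate above is uniform.

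The only substantive ingredient is the fundamental lemma of the sieve; everything else is bookkeeping. I do not expect a serious obstacle, but the natural place to lose something is the factors $m/\varphi(m)$ produced by the sieve, which could in principle inflate the final bound by a $\log\log x$ (or, in the $q_s$ sum, a $\log w$): the two elementary inputs $\sum_{q\sim Q}1/\varphi(q)\asymp1$ and $q_s/\varphi(q_s)\le q/\varphi(q)$ are precisely what prevent that.
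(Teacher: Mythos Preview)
Your proof is correct and follows essentially the same approach as the paper: the same triangle-inequality bound on $|F(n)|$, the same upper-bound sieve on each progression yielding $\ll x/(u\,\varphi(m))$, and the same two elementary summations $\sum_{q\sim Q}1/\varphi(q)\ll 1$ and $\sum_{q\sim Q}1/(q_r\varphi(q_s))\le\sum_{q\sim Q}1/\varphi(q)$. The only difference is that you make the sieve remainder $O(Z^2)$ explicit and dispose of it via $QZ^2\ll x/\log x$, whereas the paper absorbs it silently into the ``upper bound sieve'' estimate.
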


\begin{proof}
Using the trivial bound
\[ |F(n)| \leq \sum_{q\sim Q} \left(\1_{n\equiv a_q\pmod{q}} + \frac{1}{q_r} \1_{n\equiv a_q\pmod{q_s}}\right), \]
we may bound the desired expression by
\[ \sum_{q\sim Q} \sum_{\substack{n \leq x \\ n \equiv a_q\pmod{q}}} \1_{(n,\prod_{Y \leq p < Z}p)=1} + \sum_{q\sim Q} \frac{1}{q_r} \sum_{\substack{n \leq x \\ n \equiv a_q\pmod{q_s}}} \1_{(n,\prod_{Y \leq p < Z}p)=1}. \]
By an upper bound sieve, the inner sum  over $n$ in the first  term is
\[ \ll \frac{x}{q} \prod_{\substack{Y \leq p < Z \\ p\nmid q}} \left(1 - \frac{1}{p}\right) \ll \frac{x}{u} \cdot \frac{1}{\varphi(q)}  \]
for any $q \sim Q$. Thus the first term is $O(x/u)$ since
\[ \sum_{q \sim Q} \frac{1}{\varphi(q)} \ll 1. \]
The second term is dealt with similarly: bound the inner sum over $n$ by $O(x/u\varphi(q_s))$ and then the second term is
\[ \ll \frac{x}{u} \sum_{q \sim Q} \frac{1}{q_r\varphi(q_s)} \leq \frac{x}{u} \sum_{q\sim Q} \frac{1}{\varphi(q)} \ll \frac{x}{u}. \]
This completes the proof of the lemma.
\end{proof}

\begin{lemma}[Bilinear term] \label{BilinBound}
For any $P,Q \geq 2$ we have
\[ \E_{p,p'\sim  P} \left| \sum_{m \leq \min(x/p, x/p')} F(pm) \overline{F(p'm)} \right| \ll \frac{x}{P}\left(\frac{1}{w\log w} + \frac{P^{0.1} + \log x}{\pi(P)}\right) +Q^2 , \]
where $p,p'$ denote primes, where, as usual, $w\leq x$.
\end{lemma}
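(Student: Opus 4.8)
The plan is to feed the function $F$ of \eqref{F-value} into the bilinear input and open everything up. Writing
\[
\delta_q(n) := \1_{n\equiv a_q\pmod{q_s}}\Bigl(\1_{n\equiv a_q\pmod{q_r}}-\tfrac1{q_r}\Bigr),
\]
the natural building block of $F=\sum_{q\sim Q}\xi_q\delta_q$ (note $\sum_{n\bmod q}\delta_q(n)=0$ by CRT), we have $\sum_{m\le M}F(pm)\overline{F(p'm)}=\sum_{q,q'\sim Q}\xi_q\overline{\xi_{q'}}\sum_{m\le M}\delta_q(pm)\delta_{q'}(p'm)$ with $M:=\min(x/p,x/p')\asymp x/P$. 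First I would set aside the degenerate pairs. When $p=p'$ the inner sum is $\sum_m|F(pm)|^2$, and summing over $p\sim P$ while using that an integer $\le x$ has $O(\log x)$ prime factors $\sim P$ bounds the diagonal by $O\bigl((\log x)\pi(P)^{-2}\sum_{n\le x}|F(n)|^2\bigr)$; since $\sum_{n\le x}|F(n)|^2$ is a special case of the evaluation below (take $p=p'=1$) and $\sum_n|F(n)|\ll x$, this yields a contribution of the shape $\tfrac xP\cdot\tfrac{\log x}{\pi(P)}+Q^2$. When $p\mid q$ the indicator $\1_{pm\equiv a_q\pmod q}$ vanishes identically (as $(a_q,q)=1$), leaving only a ``smooth‑modulus'' tail supported on the $\ll Q/P$ moduli divisible by $p$, which Brun–Titchmarsh bounds by the $\tfrac xP\cdot\tfrac{P^{0.1}}{\pi(P)}$ term.

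For the remaining (generic) pairs, with $p\ne p'$ both coprime to $qq'$, I would evaluate $\sum_{m\le M}\delta_q(pm)\delta_{q'}(p'm)$ exactly by the Chinese Remainder Theorem. The two ``smooth'' congruences $m\equiv a_q\bar p\pmod{q_s}$ and $m\equiv a_{q'}\bar p'\pmod{q'_s}$ confine $m$ to at most one class modulo $\mathrm{lcm}(q_s,q'_s)$; in the resulting variable the factors $\1_{pm\equiv a_q\pmod{q_r}}-\tfrac1{q_r}$ and $\1_{p'm\equiv a_{q'}\pmod{q'_r}}-\tfrac1{q'_r}$ are mean‑zero patterns modulo $q_r$ and $q'_r$, the ``$-1/q_r$'' and ``$-1/q'_r$'' corrections kill the would‑be cross term, and one is left with
\[
\sum_{m\le M}\delta_q(pm)\delta_{q'}(p'm)=\frac M{qq'}\Bigl(\gcd(q,q')\,\1_{\gcd(q,q')\mid N}-\gcd(q_s,q'_s)\,\1_{\gcd(q_s,q'_s)\mid N}\Bigr)+O(1),\qquad N:=a_qp'-a_{q'}p,
\]
using $\gcd(q,q')=\gcd(q_s,q'_s)\gcd(q_r,q'_r)$. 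The $O(1)$'s sum to $\ll Q^2$ over all pairs; and since the main term vanishes unless $\gcd(q_r,q'_r)>1$, it involves only pairs $q,q'$ sharing a prime factor $>w$ — of which there are $\ll Q^2\sum_{p>w}p^{-2}\ll Q^2/(w\log w)$.

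It remains to bound $\E_{p,p'\sim P}$ of the main term, which is the heart of the matter. Writing $e=\gcd(q_s,q'_s)$ and $g=\gcd(q_r,q'_r)$, the bracket equals $e\bigl(g\,\1_{eg\mid N}-\1_{e\mid N}\bigr)$, and the point is that on averaging over $p,p'$ the two pieces almost cancel. Detecting $e\mid N$ and $eg\mid N$ with additive characters linearizes the dependence on $p$ and $p'$ and reduces the $(q,q')$‑sum to a short sum of bilinear forms in $(p,p')$, to which a Cauchy–Schwarz (large–sieve) argument over $p,p'\sim P$ applies; the quantitative saving then comes from two inputs: (i) $g$ is $w$‑rough, so $g/\varphi(g)-1=\prod_{\ell\mid g}\tfrac{\ell}{\ell-1}-1\ll\omega(g)/w\ll(\log x)/(w\log w)$, and (ii) the Mertens bound $\sum_{\ell>w}\ell^{-2}\ll1/(w\log w)$; together these yield the claimed $\tfrac xP\cdot\tfrac1{w\log w}$. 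The main obstacle is precisely this Cauchy–Schwarz step: the moduli $e$ and $eg=\gcd(q,q')$ run up to nearly $Q^2$, far beyond the range in which one controls primes $\sim P$ in residue classes, and there is no useful Bombieri–Vinogradov–type averaging over these particular moduli available. One therefore uses the asymptotic (Siegel–Walfisz) only for moduli up to a fixed power of $\log x$, Brun–Titchmarsh for moduli up to $P^{0.9}$ — the loss there being harmless because the coefficient $g/\varphi(g)-1$ already gains a factor $\gg w$ — and the trivial bound $P/(\text{modulus})+1$ for the largest moduli, which is exactly what forces the factors $P^{0.1}$ and $\log x$ in the stated error term and caps the saving at $\tfrac1{w\log w}$.
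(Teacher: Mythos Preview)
Your setup is correct and matches the paper: expanding $F=\sum_q\xi_q\delta_q$, opening the double sum, and evaluating $\sum_{m\le M}\delta_q(pm)\delta_{q'}(p'm)$ by CRT to obtain
\[
\frac{M}{qq'}\Bigl((q,q')\,\1_{(q,q')\mid N}-(q_s,q'_s)\,\1_{(q_s,q'_s)\mid N}\Bigr)+O(1),\qquad N=a_qp'-a_{q'}p,
\]
with the key observation that the main term vanishes unless $(q_r,q'_r)>1$. This is exactly the paper's $g_0(p,p',q,q')$.

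The divergence, and the gap, is in your averaging over $p,p'$. You propose to detect the divisibility with additive characters, apply Cauchy--Schwarz/large sieve, and exploit the near-cancellation $g/\varphi(g)-1\ll\omega(g)/w$ between the two pieces. But extracting $g/\varphi(g)-1$ requires a \emph{two-sided} asymptotic for the number of primes $p'\sim P$ in a residue class modulo $(q,q')$; Siegel--Walfisz gives that only for moduli up to a power of $\log x$, while for moduli up to $P^{0.9}$ Brun--Titchmarsh is only an upper bound. With an upper bound on each piece separately the bracket does not collapse to $g/\varphi(g)-1$, so the gain you invoke in that range never appears.

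The paper avoids this entirely by not seeking any cancellation. It takes absolute values at once, using $|g_0|\ll\tfrac{x}{P}\cdot\tfrac{(q,q')}{qq'}$ (or $\tfrac{(q_s,q'_s)}{qq'}$ in the non-solvable case). For fixed $q,q',p$ the congruence on $p'$ is a single residue class mod $(q,q')$ (resp.\ $(q_s,q'_s)$), and one simply counts such $p'$: Brun--Titchmarsh when the modulus is $\le P^{0.9}$, the trivial bound $P/(\text{modulus})+1$ otherwise; this is exactly where $P^{0.1}$ and $\log x$ enter. The factor $1/(w\log w)$ then comes \emph{not} from prime-counting cancellation but purely from the constraint $(q_r,q'_r)>1$ in the $q,q'$-sum: extract a common prime $\ell>w$ and use $\sum_{\ell>w}\ell^{-2}\ll 1/(w\log w)$ --- your input (ii), which is the only saving needed. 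No additive characters, no Cauchy--Schwarz, no Siegel--Walfisz, and your input (i) plays no role.
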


\begin{proof}
By the definition of $F$, we change the order of summation to write the inner sum over $m$ as
\[ \Sigma(p,p') := \sum_{q,q'\sim Q} \xi_q \overline{\xi_{q'}} \sum_{m \leq \min(x/p, x/p')} K(p,p',q,q';m), \]
where $K(p,p',q,q';m)$ is the expression
\[\left(\1_{pm\equiv a_q\pmod{q}} - \frac{1}{q_r} \1_{pm\equiv a_q\pmod{q_s}}\right) \left(\1_{p'm\equiv a_{q'}\pmod{q'}} - \frac{1}{q_r'} \1_{p'm\equiv a_{q'}\pmod{q_s'}}\right). \]
The inner sum $\sum_m K(p,p',q,q';m)$ can be written as a sum of four sums, the first of which is
\[ \sum_{m \leq \min(x/p, x/p')} \1_{pm\equiv a_q\pmod{q}} \cdot \1_{p'm\equiv a_q'\pmod{q'}}. \]
This sum should have a main term of
\[ S(p,p',q,q') := \begin{cases} \frac{\min(x/p, x/p')}{[q,q']} & \text{if }(p,q) = (p',q') = 1\text{ and }p'a_q \equiv pa_q' \pmod{(q,q')}, \\ 0 & \text{otherwise,} \end{cases} \]
with an error of $O(1)$. Similarly for the other three sums. It follows that the sum 
\[ \sum_{m \leq \min(x/p, x/p')} K(p,p',q,q';m) = g_0(p,p',q,q') + O(1), \]
where the main term $g_0(p,p',q,q')$ is defined by
\[  g_0(p,p',q,q') = S(p,p',q,q') - \frac{1}{q_r}S(p,p',q_s,q') - \frac{1}{q_r'} S(p,p',q,q_s') + \frac{1}{q_rq_r'} S(p,p',q_s,q_s'). \]
The total contribution from the $O(1)$ error is $O(Q^2)$, which is acceptable. Thus it suffices to show that
\[ \E_{p,p' \sim P} \sum_{q,q' \sim Q} |g_0(p,p',q,q')| \ll \frac{x}{P}\left(\frac{1}{w\log w} + \frac{P^{0.1}+\log x}{\pi(P)}\right). \]
Note that we have the upper bound
\begin{equation}\label{eq:g0bound1} 
|g_0(p,p',q,q')| \ll \frac{x}{P} \cdot \frac{(q,q')}{qq'}.
\end{equation}
Moreover, when $S(p,p',q,q')=0$ we have the (possibly) improved upper bound
\begin{equation}\label{eq:g0bound2} 
|g_0(p,p',q,q')| \ll \frac{x}{P} \cdot \frac{(q_s,q_s')}{qq'}. 
\end{equation}

\subsection*{Case 0}

First consider the case when $p \mid q$ or $p' \mid q'$. Then $S(p,p',q,q') = 0$ and, so by~\eqref{eq:g0bound2},
\[ 
\E_{p,p' \sim P} \sum_{\substack{q,q' \sim Q \\  p \mid q \text{ or } p' \mid q}} |g_0(p,p',q,q')| \ll 
\frac{x}{P\pi(P)^2} \sum_{p,p' \sim P} \sum_{\substack{q,q' \sim Q \\  p \mid q \text{ or } p' \mid q}} \frac{(q_s,q_s')}{qq'},
 \]
 which by symmetry is
\[
\ll 
\frac{x}{P\pi(P)} \sum_{p\sim P} \sum_{\substack{q,q' \sim Q \\  p \mid q }} \sum_{\substack{ d\geq 1 \\ P(d)\leq w \\ d|q, d|q'}} \frac{d}{qq'} = \frac{x}{P\pi(P)} \sum_{p\sim P} \frac 1p \sum_{\substack{ d\geq 1 \\ P(d)\leq w }}  \frac 1d
\sum_{\substack{r \sim Q/pd \\ r'\sim Q/d }}\frac{1}{rr'}
 \]
 writing $q=pdr, q'=dr'$, which is
 \[
 \ll \frac{x}{P\pi(P)} \cdot \frac{1}{\log P} \cdot \log w \cdot 1^2 \ll \frac{x\log w}{P^2},
 \]
which is easily acceptable.

\subsection*{Case 1} 

Now consider the case when $(p,q) = (p',q') = 1$ and $p'a_q \equiv pa_q' \pmod{(q,q')}$. Since $(q_s,q') = (q,q_s') = (q_s,q_s')$, we have
\[ g_0(p,p',q,q') = \min(x/p, x/p') \left(\frac{(q,q')}{qq'} - \frac{(q_s,q_s')}{qq'}\right). \]
It vanishes unless $(q_r,q_r') > 1$, and thus by~\eqref{eq:g0bound1} it suffices to show that
\[
\Sigma_1:= \frac{1}{\pi(P)^2}  \sum_{\substack{ q,q'\sim Q \\ (q_r,q_r')>1}}  \sum_{\substack{ p,p'\sim  P \\ (p,q) = (p',q') = 1 \\ p'a_q \equiv pa_q'\pmod{(q,q')} }} 
\frac{(q,q')}{qq'} \ll \frac{1}{w\log w} + \frac{P^{0.1} + \log x}{\pi(P)}.
\]
For fixed $q,q',p$, the constraint $p'a_q \equiv pa_{q'} \pmod{(q,q')}$ imposes a congruence condition on $p' \pmod{(q,q')}$, and  the number of $p'$ satisfying it is
\[ \ll \begin{cases} \pi(P)/\varphi((q,q')) & \text{if }(q,q') \leq P^{0.9}, \\ P/(q,q') + 1  & \text{if }(q,q') > P^{0.9}. \end{cases} \] 
Here the bound in the first case follows from Brun-Titchmarsh, and  in the second case by dropping the primality condition on $p'$.  Therefore
\begin{equation} \label{Bound1}
\Sigma_1\ll  \frac{1}{Q^2}  \sum_{\substack{ q,q'\sim Q\\  (q_r,q_r')>1}}   \frac{(q,q')}{\varphi((q,q')) }   +
 \frac{1}{\pi(P) Q^2}  \sum_{\substack{ q,q'\sim Q\\ (q,q') > P^{0.9}  }}    (P+ (q,q') ) .
\end{equation} 
In the second term let $d=(q,q')>P^{0.9}$ and then the sum is 
\[ \leq \sum_{P^{0.9} < d \leq 2Q} (P+d) \sum_{\substack{q,q' \sim Q \\ d\mid (q,q')}} 1 \ll \sum_{P^{0.9} < d \leq 2Q} (P+d) \frac{Q^2}{d^2} \ll Q^2(P^{0.1} + \log x), \]
which is acceptable. The first term has the restriction that $(q_r,q_r') > 1$, which implies that there is some prime $p>w$ with $p \mid (q,q')$. Writing $q=pm$ and $q'=pm'$, we have
\[ \sum_{\substack{ q,q'\sim Q\\  (q_r,q_r')>1}}   \frac{(q,q')}{\varphi((q,q')) }  \leq \sum_{w < p \leq 2Q} \frac{p}{p-1} \sum_{m,m' \sim Q/p} \frac{(m,m')}{\varphi((m,m'))}. \]
Using the identity
\[ \frac{(m,m')}{\varphi((m,m'))} = \sum_{d \mid (m,m')} \frac{\mu^2(d)}{\varphi(d)}, \]
we can bound the sum over $q,q'$ by
\[ 
\begin{split} \sum_{w < p \leq 2Q} \frac{p}{p-1} \sum_{d \leq 2Q/p} \frac{\mu^2(d)}{\varphi(d)} \sum_{\substack{m,m' \sim Q/p \\ d\mid (m,m')}} 1 & \ll \sum_{w < p \leq 2Q}  \sum_{d} \frac{\mu^2(d)}{\varphi(d)} \left(\frac{Q}{pd}\right)^2 \\
& \ll Q^2 \sum_{p>w} \frac{1}{p^2} \ll \frac{Q^2}{w\log w}. \end{split}\]
This completes the task of bounding $\Sigma_1$.

\subsection*{Case 2} 
Finally consider the case when $(p,q) = (p',q') = 1$ and $p'a_q \not\equiv pa_q' \pmod{(q,q')}$. If we further have $p'a_q \not\equiv pa_q' \pmod{(q_s,q_s')}$, then 
\[ S(p,p',q,q') = S(p,p',q_s,q') = S(p,p',q,q_s') = S(p,p',q_s,q_s') = 0, \]
and thus $g_0(p,p',q,q') = 0$. Hence we may impose the condition $p'a_q \equiv pa_q' \pmod{(q_s,q_s')}$. By~\eqref{eq:g0bound2} it suffices to show that
\[
\Sigma_2:= \frac{1}{\pi(P)^2}  \sum_{\substack{ q,q'\sim Q  }}  \sum_{\substack{ p,p'\sim  P \\ (p,q) = (p',q') = 1 \\ p'a_q \not\equiv pa_q'\pmod{(q,q')} 
\\  p'a_q\equiv pa_q'\pmod{(q_s,q_s')}}} 
\frac{(q_s,q'_s)}{qq'} \ll \frac{1}{w\log w} + \frac{P^{0.1} + \log x}{\pi(P)}.
\]
Note that the sum is nonempty only  if $(q_r,q_r') > 1$. Arguing as in Case 1, the number of $p'$ satisfying the congruence condition on $p'\pmod{(q_s,q_s')}$ is
\[ \ll \begin{cases} \pi(P)/\varphi((q_s,q_s')) & \text{if }(q_s,q_s') \leq P^{0.9} \\ P/(q_s,q_s') + 1 & \text{if }(q_s,q_s') > P^{0.9}. \end{cases} \]
This leads to the upper bound
\begin{equation*} 
\Sigma_2\ll  \frac{1}{ Q^2}  \sum_{\substack{ q,q'\sim Q\\  (q_r,q_r')>1}}   \frac{(q_s,q_s')}{\varphi((q_s,q_s')) }   +
 \frac{1}{\pi(P) Q^2}  \sum_{\substack{ q,q'\sim Q\\ (q_s,q_s') > P^{0.9}  }}    (P+ (q_s,q_s') ) .
\end{equation*} 
Now as $(q_s,q_s')\leq (q,q')$ and $\frac{(q_s,q_s')}{\varphi((q_s,q_s')) }   \leq \frac{(q,q')}{\varphi((q,q')) } $, we   bound 
$\Sigma_2$ by the same quantity with which we bounded $\Sigma_1$ in \eqref{Bound1}, and the result follows.
\end{proof}

%%%%%%%%%%%%%%%

 \subsection{Putting the pieces together} 

We now have the ingredients to deduce Theorem~\ref{MathResult2} from Proposition~\ref{prop:ramare}.

\begin{proof} [Proof of Theorem \ref{MathResult2}] 
Recall that the left-hand side of the equation in Theorem \ref{MathResult2} can be re-written as 
 \[  \sum_{n \leq x} f(n) F(n) \]
 where the function $F$ is defined as in~\eqref{F-value}. We bound this by applying Proposition \ref{prop:ramare}. Set $Y = (\log x)^4$ and $Z = x^{\ee/2}$, so that $u = \log Z/(\log Y) \asymp \log x/(\log\log x)$. By Lemma \ref{sievebound} we have 
\[ E_{\text{sieve}}  \ll \frac{x}{u} \ll \frac{x\log\log x}{\log x}. \]
 By Lemma \ref{BilinBound}, the assumption $Q \leq x^{1/2-\ee}$, and our choice of $Y$ and $Z$, we have
\[  E_{\text{bilinear}}    \ll  \frac x{\log x} + \frac{x}{(w\log w)^{1/2}}  . \]
To bound $T$, provided $d\leq Z^2\leq x/Q$ we have
\[
  \sum_{\substack{n \leq x \\ d\mid n}} |F(n)| \leq  \sum_{q\sim Q}  \sum_{\substack{n \leq x \\ n \equiv a_q\pmod{q}\\ d\mid n}} 1
 +  \frac 1{q_r} \sum_{q\sim Q}  \sum_{\substack{n \leq x \\ n \equiv a_q\pmod{q_s}\\ d\mid n}} 1 \ll  \sum_{q\sim Q}  \frac x{qd} \ll  \frac xd,
\]
so that $T\ll x$. The proof is completed by combining all these estimates together.
\end{proof}

%%%%%%%%%%%%%%%%%
\section{Good error terms for smooth-supported $f$ in arithmetic progressions} 
%%%%%%%%%%%%%%%%%

In the following result we will prove a good estimate for all $f$ supported on $y$-smooth integers. In this article we will only use this with $y$ a fixed power of $x$, but the full range will be useful in the sequel~\cite{DGS}.

\begin{proposition} \label{Errorfaps1} Fix $B\geq 0$ and $0<\eta<\frac 12$.  Given $y=x^{1/u}$ in the range
$ x^{1/2-\eta}\geq y \geq \exp(3(A+1+\epsilon) (\log\log x)^2/(\log\log\log x))$, let 
\[
R=R(x,y):=\min\{ y^{ \frac{  \log\log\log x}{3\log u}},  x^{\frac \eta{3\log\log x}}\} \ (\leq y^{1/3 + o(1)}).
\]   
Suppose that $f\in \mathcal C$, and is only supported on $y$-smooth numbers. 
There exists a set, $\Xi$, of primitive characters $\psi \pmod r$ with $r\leq R$, containing $\ll   u^{u+o(u)}(\log x)^{6B+7+ o(1)}$ elements, such that  if $q\leq R$ and $(a,q)=1$ then
\[    
| \Delta_{\Xi}(f,x;q,a) | \leq  \frac 1{\varphi(q)}   \sum_{ \substack{\chi \pmod q \\ \chi \not\in  \Xi_q }} \left|  S_f(x,\chi) \right| \ll  \frac 1{\varphi(q)}  \frac{\Psi(x,y)} {(\log x)^{B}} .
\]
\end{proposition}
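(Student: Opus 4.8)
The plan is to bound $|S_f(x,\chi)|$ for each character $\chi \pmod q$ with $q \leq R$ that is \emph{not} induced by a character in the exceptional set $\Xi$, the set being chosen precisely so that the large character sums are absorbed. First I would set up a Dirichlet-series/hyperbola decomposition adapted to smoothness: since $f$ is supported on $y$-smooth integers and $f \in \mathcal{C}$, one can write $f = g * h$ (or iterate this) where one factor is supported on small primes and the other captures the ``long'' variable, and use $|\Lambda_f| \leq \Lambda$ to control everything multiplicatively. The key quantitative input is the smooth-number machinery from Section~\ref{sec:smooth-prelim} — in particular the comparison bounds \eqref{alphaCompare1}, \eqref{alphaCompare}, \eqref{alphaCompare2} and the identity $S_f(x,\chi) = \sum_{m \leq x} h(m) S_f(x/m,\psi)$ from the proof of Lemma~\ref{Lem:Chars} — together with the GHS-type estimate \eqref{eq:GHS}/Theorem 6.1 of~\cite{GHS}, which already handles the case $B < 1$ with $\Xi$ bounded only in terms of $B$.

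The main step is a dyadic/iterative ``pretentious'' argument: for each scale, either $f$ does not pretend to be any character of conductor $\leq R$ — in which case Halász-type bounds (via Theorem 6.1 of~\cite{GHS}, applied at the relevant truncation $X$, and then transferred from a primitive character to the induced ones via Lemma~\ref{Lem:Chars}) give $|S_f(X,\chi)| \ll \Psi(X,y)/(\log x)^{B}$ — or $f$ does pretend to some primitive $\psi \pmod r$ with $r \leq R$, in which case we adjoin $\psi$ to $\Xi$ and factor it out, replacing $f$ by $f\overline{\psi}$ and recursing. The count of how many characters can enter $\Xi$ is the quantitative heart: each time a character enters, it forces $\sigma_f$ (or the relevant pretentious distance) to drop by a definite amount, and Theorem 6.1 of~\cite{GHS} caps how many mutually ``independent'' large character sums can coexist, which yields the bound $\ll u^{u+o(u)}(\log x)^{6B+7+o(1)}$ — the $u^{u+o(u)}$ factor coming from the density of $y$-smooth numbers $\Psi(x,y)/x = u^{-u+o(u)}$ and the power of $\log x$ from tracking the exponent $B$ through the iteration, analogously to Corollary~\ref{No Exceptions}(b) but now allowing conductors up to $R = y^{1/3+o(1)}$ rather than just $\leq \log x$.

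The final bookkeeping is to pass from the primitive characters $\psi$ to \emph{all} induced characters $\chi \pmod q$ for $q \leq R$: here one uses the first two parts of Lemma~\ref{Lem:Chars} to transfer the bound $\sigma_f(x,z,\psi) \ll (\log x)^{-B}$ (with $z$ chosen of size $\exp((\log\log x)^2) \leq z \leq R$, legitimate since $R \leq y^{1/3+o(1)}$ and $y \geq (\log x)^{4B+5}$) into $|S_f(x,\chi)| \ll \Psi(x,y)(\log\log x)/(\log x)^{B}$, and then divides by $\varphi(q)$ and sums the $\varphi(q)$ many residues trivially, exactly as in Corollary~\ref{No Exceptions}. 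The inequality $|\Delta_\Xi(f,x;q,a)| \leq \frac{1}{\varphi(q)}\sum_{\chi \not\in \Xi_q}|S_f(x,\chi)|$ is immediate from the orthogonality expansion \eqref{expansion} (as noted at the end of Section~\ref{sec:contrib-char}), so the whole Proposition reduces to the pointwise bound on the non-exceptional $|S_f(x,\chi)|$.

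The hard part will be making the iteration terminate with the stated polynomial-in-$\log x$ bound on $|\Xi|$ while allowing conductors as large as $R = y^{1/3+o(1)}$: for moduli this large one cannot simply quote Corollary~\ref{No Exceptions}(b) (which needs $\log Q = (\log x)^{o(1)}$), so one must re-run the GHS repulsion argument with the smooth-number weights \eqref{alphaCompare1}–\eqref{alphaCompare2} inserted at every step, and check that the accumulated error terms (each of relative size a power of $1/\log y$, and there are $\asymp u^{u}(\log x)^{O(1)}$ of them) still sum to something of size $\Psi(x,y)/(\log x)^B$ — i.e. that the lower bound $y \geq (\log x)^{4B+5}$ is exactly what is needed to keep the loss under control.
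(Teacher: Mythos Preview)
Your proposal has a genuine gap at the core: Hal\'asz-type bounds and the repulsion phenomenon of Theorem~6.1 of~\cite{GHS} can save at most \emph{one} power of $\log x$ over the trivial bound. That is exactly why Corollary~\ref{No Exceptions}(b) requires $B<1-1/\sqrt{J}<1$, and the paper itself remarks (just after Proposition~\ref{MainCor}) that the case $B<1$ follows from~\cite{GHS} for any $f\in\mathcal C$. For $B\geq 1$ your iterative pretentious scheme cannot terminate with the claimed control: no matter how many characters you strip off, the residual Hal\'asz bound never improves past a single power of $\log x$, so you cannot reach $|S_f(X,\chi)|\ll \Psi(X,y)/(\log x)^B$ for the non-exceptional $\chi$. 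The $(\log x)^{6B+7}$ count you want for $|\Xi|$ likewise cannot come from the GHS repulsion argument, which only governs characters whose sums exceed $x/(\log x)^{1-\varepsilon}$.

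The paper's route is structurally different. First it reduces (Proposition~\ref{Errorfaps1q}) to $f$ supported on primes in $(q^2,y]$, then for such $f$ it uses \emph{Harper's identity}
\[
f(n)\log n=\Lambda_f(n)+\int_0^\infty n^{-\beta}\sum_{abm=n}\Lambda_f(a)a^\beta\Lambda_f(b)f(m)\,d\beta
\]
together with Perron's formula to obtain a trilinear structure. After Cauchy--Schwarz over $\chi\pmod q$, the two prime-power sums in $a,b$ are handled by Brun--Titchmarsh (orthogonality collapses the character sum to a congruence $a\equiv b\pmod q$), while the $m$-sum is bounded using only the \emph{definition} of $\chi\notin\mathcal T_q^*$ (no Hal\'asz). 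This bilinear mechanism is what permits saving an arbitrary power of $\log x$. The set $\Xi$ is defined directly as those primitive $\psi$ with some $|S_f(X,\psi)|\geq \Psi(X,y)/V^2(\log x)^{2B+2+\varepsilon}$, and its size $\ll u^{u+o(u)}(\log x)^{6B+7+o(1)}$ is obtained by the \emph{large sieve} (Section~\ref{LargeS}), not by GHS repulsion. Finally Proposition~\ref{Errorfaps1} is deduced from Proposition~\ref{Errorfaps1q} by writing $f=f_{\leq q^2}*g$ and controlling the small-prime convolution via the smooth-number estimates, which is the one place your instinct about \eqref{alphaCompare1}--\eqref{alphaCompare2} and the identity from Lemma~\ref{Lem:Chars} is on target.
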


 This immediately implies Proposition \ref{MainCor}.

Fix $\varepsilon>0$. In Proposition \ref{Errorfaps1} we let $\Xi=\Xi(2B+2+\varepsilon)$, where  we define $\Xi(C)$ to be the set of primitive characters $\psi \pmod r$ with $r\leq R$ such that there exists $x^{\eta}< X\leq x$ for which
\begin{equation} \label{Hyp3}
|S_f(X,\psi)|    \geq \frac {\Psi(X,y)}{(u\log u)^4(\log x)^{C}}  .
\end{equation}
We prove that $\Xi(C)$ has $\ll u^{u+o(u)}(\log x)^{3C+1}$ elements in section \ref{LargeS}.

%%%%%%%%%%%%%%%%%%
\subsection{A further support restriction for $f$}   We will deduce Proposition \ref{Errorfaps1}   from a similar (but stronger) result restricted to multiplicative functions $f$, which are only supported  on those prime powers $p^k$ for which 
$p\in (qL,y]$ where $L\geq q$ and $L\geq (\log x)^C$ for an appropriate value of $C$.

Fix a real number  $A$. Let $\mathcal T_q^*(A)=\mathcal T_{q,f}(A ,y^2(q^2(\log x)^{2A})^{1/\alpha})$ where $\mathcal T_{q,f}(A,z)$ is the set of characters $\chi \pmod q$ 
for which there exists  $x/z<X\leq x$ such that
\begin{equation} \label{Hyp2}
 |S_f(X,\chi)|     > \frac { \Psi(X,y)}{(u\log u)^4(\log x)^{2A+2+\varepsilon}}  .
\end{equation}

\begin{proposition} \label{Errorfaps1q} Fix $A\geq 0$ and $0<\eta<\frac 12$, and let
\[
x^{1/2-\eta}\geq y \geq \exp(3(A+1) (\log\log x)^2/(\log\log\log x)).
\]
Given $q\leq \min\{ y^{1/2} , x^{ \eta/2} \}/(\log x)^{ A+2}$
 with 
$(a,q)=1$,    let $L=L_q:=q+(\log x)^{A+1}$.  If $f\in \mathcal C$  is only supported on prime powers $p^k$  with $qL<p\leq y$, then
 \begin{equation} \label{smallsupport}  
| \Delta_{\mathcal T_q^*(A)}(f,x;q,a) | \leq  \frac 1{\varphi(q)}   \sum_{ \substack{\chi \pmod q \\ \chi \not\in  \mathcal  T_q^*(A) }} \left|  S_f(x,\chi) \right| \ll  \frac 1{\varphi(q)}  \frac{\Psi(x,y)} {(\log x)^{A}} .
\end{equation}
\end{proposition}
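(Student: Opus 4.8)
The plan is to control the character sums $S_f(x,\chi)$ for $\chi \pmod q$ not in the exceptional set $\mathcal T_q^*(A)$, and to show the total contribution of $\Xi_q$-removed terms is $\ll \Psi(x,y)/(\varphi(q)(\log x)^A)$. Since $f$ is supported on prime powers $p^k$ with $q^2 < p \leq y$, every $n$ in the support of $f$ factors as $n = m\ell$ where $\ell$ is built from primes $> q^2$ and $m$ is $y$-smooth with the same restriction; the point of the support condition $p > q^2$ is that primes dividing $n$ are automatically coprime to $q$ unless they coincide with a factor of $q$, and more importantly it will let us run a Vaughan-type / hyperbola decomposition cleanly. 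The first step is to write $f = \Lambda_f * \mathbf{1}_{\mathrm{supp}}$-type identity — more precisely, use the identity $\log n \cdot f(n) = (f \cdot \Lambda_f \star f)(n)$ flavoured decomposition, or directly expand $S_f(x,\chi)$ via the Dirichlet series relation $-F'/F = \sum \Lambda_f(n) n^{-s}$ — to split $S_f(x,\chi)$ into "type I" sums (where one variable is smooth and short, controllable by the smooth-number estimates \eqref{alphaCompare1}, \eqref{alphaCompare2}, \eqref{SmoothUB}, \eqref{SmoothShorts}) and "type II" bilinear sums over the primes $p \in (q^2,y]$.

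\textbf{Main steps.} First, introduce the truncation parameter $z = y^2(q^2(\log x)^{2A})^{1/\alpha}$ appearing in the definition of $\mathcal T_q^*(A)$, and split the range of summation in $S_f(x,\chi)$ at $x/z$: the contribution of $n > x/z$ (equivalently, a complementary divisor $\leq z$) is handled by the smooth-number comparison bounds, since $\Psi(x/d,y) \ll \Psi(x,y) d^{-\alpha} d^{1-\alpha}$ by \eqref{alphaCompare} and summing $1/d$ over $d \leq z$ of the relevant shape costs only a power of $\log x$, which is absorbed by the $1/(\log x)^{2A+2+\varepsilon}$ slack in \eqref{Hyp2}. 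Second, for the remaining "long" part, use the restricted support to factor out one prime $p \in (q^2,y]$ and obtain a bilinear sum $\sum_{p} \sum_{m} (\cdots) \chi(pm)$; apply the large sieve / orthogonality over characters mod $q$, or a direct Cauchy–Schwarz in the $m$-variable, to bound $\sum_{\chi \pmod q} |S_f(x,\chi)|^2$ (or the relevant subsum), exploiting that the conductor $q$ is small — $q \leq \min\{y^{1/2}, x^{\eta/2}\}/(\log x)^{A+2}$ — so that $q^2 \leq y/(\log x)^{\cdots} \leq x/z$-ish and the bilinear variables have enough room. Third, by definition any $\chi \notin \mathcal T_q^*(A)$ has $S_f(X,\chi) \leq \Psi(X,y)/((u\log u)^4(\log x)^{2A+2+\varepsilon})$ for all $x/z < X \leq x$; feed this back into the partial-summation / combinatorial identity to upgrade the pointwise bound on $S_f(x,\chi)$, and finally sum $\frac{1}{\varphi(q)}\sum_{\chi \notin \mathcal T_q^*(A)} |S_f(x,\chi)|$, using that there are $\leq \varphi(q) \leq q \ll (\log x)^{o(1)}\cdot(\text{power})$ characters and that $(u\log u)^4$ in the denominator of \eqref{Hyp2} beats the extra factors from the number of characters and the divisor sums, netting the claimed $\Psi(x,y)/(\varphi(q)(\log x)^A)$.

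\textbf{Expected main obstacle.} The delicate point is the bookkeeping with the truncation parameter $z = y^2(q^2(\log x)^{2A})^{1/\alpha}$: one needs $x/z$ to be large enough that the bilinear variables $p \in (q^2,y]$ and $m$ genuinely overlap in a useful range (so Cauchy–Schwarz in $m$ is not wasteful), yet the smooth-number tail estimate for $n > x/z$ must still save a full power $(\log x)^{A}$ — this forces the precise exponent $1/\alpha$ and the precise size constraints on $q$ and $y$, and it is here that the factors $(u\log u)^4$ and $(\log x)^{2A+2+\varepsilon}$ (rather than just $(\log x)^A$) in \eqref{Hyp2} are consumed. A secondary technical nuisance is ensuring the factorization of $f$-supported $n$ interacts correctly with the principal character mod $q$ (the $(n,q)=1$ condition) — but the hypothesis that $f$ is supported on primes $> q^2$ makes this essentially automatic, since such primes cannot divide $q$ unless $q$ has a prime factor $> q^2$, which is impossible. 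I expect the bilinear estimate itself to be routine given the smooth-number toolkit already assembled in Section~\ref{sec:smooth-prelim}; the real work is choosing parameters so all the error terms land below $\Psi(x,y)/(\varphi(q)(\log x)^A)$ simultaneously.
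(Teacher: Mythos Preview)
Your proposal has a genuine gap at the final summation step. You write that there are ``$\leq \varphi(q) \leq q \ll (\log x)^{o(1)}\cdot(\text{power})$ characters'' and that the factor $(u\log u)^4$ in the denominator of \eqref{Hyp2} ``beats the extra factors from the number of characters''. This is false: the hypothesis allows $q$ up to $\min\{y^{1/2}, x^{\eta/2}\}/(\log x)^{A+2}$, which is a genuine power of $x$, whereas $(u\log u)^4 \ll (\log x)^{8}$. If you simply insert the pointwise bound from \eqref{Hyp2} into $\tfrac{1}{\varphi(q)}\sum_{\chi\notin\mathcal T_q^*(A)}|S_f(x,\chi)|$ and sum over the $\varphi(q)$ characters, you obtain $\Psi(x,y)/((u\log u)^4(\log x)^{2A+2+\varepsilon})$, which is \emph{not} $\ll \Psi(x,y)/(\varphi(q)(\log x)^A)$ when $q$ is a power of $x$. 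Your vaguer alternative, bounding $\sum_\chi|S_f(x,\chi)|^2$ by large sieve or orthogonality, also does not recover a factor of $\varphi(q)$: orthogonality gives $\varphi(q)\sum_{m\equiv n\pmod q}f(m)\overline{f(n)}$, which is of size $\varphi(q)\Psi(x,y)^2/q$ and gains nothing.

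The paper's argument is structurally quite different from yours. It starts from Harper's identity
\[
f(n)\log n = \Lambda_f(n) + \int_0^\infty n^{-\beta}\sum_{abm=n}\Lambda_f(a)a^\beta\Lambda_f(b)f(m)\,d\beta,
\]
twists by $\overline\chi$, applies Perron's formula, and analyses the resulting $t$-integral in ranges. The crucial point (which your proposal does not locate) is that the factor of $\varphi(q)$ is gained by applying Cauchy--Schwarz and character orthogonality to the \emph{$\Lambda_f$-sums}, not to $S_f$ itself: one gets
\[
\frac{1}{\varphi(q)}\sum_{\chi\pmod q}\Bigl|\sum_{a\leq y}\frac{\Lambda_f(a)\overline\chi(a)}{a^s}\Bigr|^2
= \sum_{\substack{q^2<a,b\leq y\\ a\equiv b\pmod q}}\frac{\Lambda_f(a)\overline{\Lambda_f(b)}}{(ab)^c}(a/b)^{-it},
\]
and since $a,b$ are prime powers with $q^2<a,b\leq y$, Brun--Titchmarsh bounds this by $\ll(\log y)^2/\varphi(q)$. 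The support hypothesis $p>q^2$ is used \emph{here}, to make Brun--Titchmarsh applicable, not merely to ensure $(n,q)=1$. The hypothesis $\chi\notin\mathcal T_q^*(A)$ is used only on the $m$-sum (via partial summation and \eqref{Hyp2}), after the Harper decomposition has already separated off two $\Lambda_f$-factors on which the $\varphi(q)$-saving orthogonality argument runs.
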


By minor modifications of the proof of  Proposition \ref{Errorfaps1q}  we will deduce the following:

\begin{corollary} \label{ErrorfapsA=0}  
Fix $0 < \eta < \tfrac 12$ and $\varepsilon>0$, and let
\[
x^{1/2-\eta}\geq y \geq \exp(6(\log\log x)^2/(\log\log\log x)).
\]
Given $q\leq \min\{ y^{1/2} , x^{ \eta/2} \}$
 with 
$(a,q)=1$,    let $L=L_q:=q+(\log x)^{2}$.  If $f\in \mathcal C$  is only supported on prime powers $p^k$  with $qL<p\leq y$, then
 \[ 
   \sum_{ \substack{\chi \pmod q }} \left|  S_f(x,\chi) \right| \ll  \Psi(x,y) (\log x)^{3+\varepsilon}     .
\]
\end{corollary}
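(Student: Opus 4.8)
The plan is to deduce Corollary~\ref{ErrorfapsA=0} from Proposition~\ref{Errorfaps1q} by taking $A=0$ and understanding what the exceptional set $\mathcal{T}_q^*(0)$ contributes. With $A=0$ the hypothesis on $q$ becomes $q\leq \min\{y^{1/2},x^{\eta/2}\}$, matching the statement; and Proposition~\ref{Errorfaps1q} gives
\[
\sum_{\substack{\chi\pmod q\\ \chi\notin\mathcal{T}_q^*(0)}} |S_f(x,\chi)| \ll \frac{\varphi(q)}{\varphi(q)}\Psi(x,y) = \Psi(x,y)\ll \Psi(x,y)(\log x)^{3+\varepsilon}.
\]
Wait --- one must be slightly careful: the bound in \eqref{smallsupport} is $\frac1{\varphi(q)}\frac{\Psi(x,y)}{(\log x)^A}$, and $\sum_{\chi\notin\mathcal{T}_q^*}|S_f(x,\chi)| \leq \varphi(q)\max_{(a,q)=1}|\Delta_{\mathcal{T}_q^*}(f,x;q,a)|$, so summing over all $\chi\notin\mathcal{T}_q^*$ costs a factor $\varphi(q)$ and yields $\ll \Psi(x,y)$ when $A=0$. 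So the contribution of the non-exceptional characters is already fine. The entire content of the corollary is therefore to bound the contribution of the \emph{exceptional} characters $\chi\in\mathcal{T}_q^*(0)$.

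The key step is thus: first, bound $|\mathcal{T}_q^*(0)|$, and second, bound $|S_f(x,\chi)|$ for each individual $\chi$ trivially by $\Psi(x,y)$ (or rather by $\sum_{n\leq x,\ (n,q)=1}|f(n)| \leq \Psi(x,y)$, using $f\in\mathcal C$ so each $|f(n)|\leq 1$, and $f$ supported on $y$-smooth numbers). Then
\[
\sum_{\chi\in\mathcal{T}_q^*(0)} |S_f(x,\chi)| \leq |\mathcal{T}_q^*(0)|\cdot \Psi(x,y).
\]
So the proof reduces to showing $|\mathcal{T}_q^*(0)|\ll (\log x)^{3+\varepsilon}$. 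Here $\mathcal{T}_q^*(0) = \mathcal{T}_{q,f}(0, z_0)$ with $z_0 = y^2(q^2(\log x)^0)^{1/\alpha} = y^2 q^{2/\alpha}$, the set of characters $\chi\pmod q$ such that $S_f(X,\chi) > \Psi(X,y)/((u\log u)^4(\log x)^{2+\varepsilon})$ for some $x/z_0 < X\leq x$. This is exactly the kind of "large character sums are few" statement that the companion result $\Xi(C)$ (which has $\ll u^{u+o(u)}(\log x)^{3C+1}$ elements, proven in Section~\ref{LargeS}) handles, but now for characters mod $q$ rather than primitive characters; with $C=2+\varepsilon$ one would get $\ll u^{u+o(u)}(\log x)^{7+3\varepsilon}$ which is \emph{too weak} by a power of $\log x$ and also carries the bad factor $u^{u+o(u)}$. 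The point of the "further support restriction" (that $f$ is supported only on $p^k$ with $q^2<p\leq y$) must be exactly what kills the $u^{u+o(u)}$ and cuts the power of $\log x$ down: when $f$ is supported on primes exceeding $q^2$, an $L^2$/large-sieve or mean-value argument over characters mod $q$ should give a bound on the number of characters with $|S_f(X,\chi)|$ large that is only polynomial in $\log x$, with a small exponent, precisely because such $f$ has no small prime factors to inflate divisor-type sums.

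The main obstacle, then, is establishing the bound $|\mathcal{T}_q^*(0)|\ll (\log x)^{3+\varepsilon}$ (equivalently, extracting this cardinality estimate as a by-product of the machinery developed to prove Proposition~\ref{Errorfaps1q}, rather than re-deriving it). I expect that the proof of Proposition~\ref{Errorfaps1q} already contains, as an intermediate step, a bound of the shape $|\mathcal{T}_{q,f}(A,z)|\ll (\log x)^{3A+3+\varepsilon'}$ or similar (this would be consistent with the $(\log x)^{3C+1}$ count for $\Xi(C)$ and the shifts in the exponents between Propositions~\ref{Errorfaps1} and~\ref{Errorfaps1q}); specializing $A=0$ then gives $\ll(\log x)^{3+\varepsilon}$, and "minor modifications of the proof" means: stop the argument at the point where the exceptional set has been counted and its members bounded trivially, rather than carrying out the final step of showing the remaining characters contribute a power-saving. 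Concretely I would (i) invoke the cardinality bound on $\mathcal{T}_q^*(0)$ that is proven en route to Proposition~\ref{Errorfaps1q}; (ii) bound $|S_f(x,\chi)| \le \sum_{n\le x,\,(n,q)=1}1 \ll \Psi(x,y)$ for every $\chi$, using that $f$ is $y$-smooth and $1$-bounded; (iii) bound $\sum_{\chi\notin\mathcal{T}_q^*(0)}|S_f(x,\chi)|\ll\Psi(x,y)$ from Proposition~\ref{Errorfaps1q} with $A=0$; and (iv) add (ii)$\times$(i) to (iii) to get $\sum_{\chi\pmod q}|S_f(x,\chi)|\ll \Psi(x,y)(1+(\log x)^{3+\varepsilon})\ll \Psi(x,y)(\log x)^{3+\varepsilon}$. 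The only genuinely delicate point is verifying that the claimed count for the exceptional set really comes out with exponent $3+\varepsilon$ and no $u^{u+o(u)}$ factor under the support restriction $q^2<p\le y$; if the proof of Proposition~\ref{Errorfaps1q} gives a weaker count, one would need the large sieve inequality directly for the (squarefull-free, large-prime-supported) $f$ to sharpen it.
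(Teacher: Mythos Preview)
Your strategy differs from the paper's and has a genuine gap. You want to split the sum into $\chi\notin\mathcal T_q^*(0)$ (handled by Proposition~\ref{Errorfaps1q} with $A=0$) and $\chi\in\mathcal T_q^*(0)$ (each bounded trivially by $\Psi(x,y)$), and then assert $|\mathcal T_q^*(0)|\ll(\log x)^{3+\varepsilon}$. But the proof of Proposition~\ref{Errorfaps1q} never bounds $|\mathcal T_q^*(A)|$; it simply excludes those characters from the sum. No such cardinality estimate is ``proven en route''. If you try to supply one via orthogonality/large sieve, you get at best $|\mathcal T_q^*(0)|\ll \sum_\chi |S_f(X,\chi)|^2 \big/ (\Psi(X,y)/V^2(\log x)^{2+\varepsilon})^2 \ll V^4(\log x)^{4+2\varepsilon}$ for a single $X$, and then you still have to cover the whole range $x/z_0<X\le x$; the resulting exponent of $\log x$ exceeds $3+\varepsilon$ and carries unwanted $V=(u\log u)^2$ factors. (There is also a minor range mismatch: Proposition~\ref{Errorfaps1q} with $A=0$ needs $q\le \min\{y^{1/2},x^{\eta/2}\}/(\log x)^2$, strictly narrower than the hypothesis here.)

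The paper's ``minor modifications'' go the other way: rather than apply Proposition~\ref{Errorfaps1q} and then handle the exceptional set, it \emph{re-runs} the argument with parameters chosen so that no exceptional set is ever created. Concretely, take $T=\varphi(q)$ (instead of $T=\varphi(q)U$) and extend the range treated in section~\ref{subtle} to all $|t|\le T$, so that the final range $|t|\le U$ where hypothesis~\eqref{Hyp5} was invoked disappears. In the bounds of section~\ref{subtle} the factor $1/U$ becomes $1/(U+1)\asymp 1$, which costs exactly the factor $V(\log x)^{1+\varepsilon}$; going through the same partial-summation endgame yields $\sum_\chi |S_f(x,\chi)| \ll V\Psi(x,y)(\log x)^{1+\varepsilon}\ll \Psi(x,y)(\log x)^{3+\varepsilon}$ for \emph{all} $\chi\pmod q$ at once. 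The point is that the $L^2$/Brun--Titchmarsh argument in section~\ref{subtle} already averages over all characters mod $q$; the exceptional-set device was only introduced to gain an extra power of $\log x$, and dropping it gives exactly the weaker bound claimed here.
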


By Cauchying, the upper bound is $\ll q^{1/2} \Psi(x,y) $, so Corollary \ref{ErrorfapsA=0} is better in a wide range.

In the next seven subsections we will prove Proposition \ref{Errorfaps1q}. 
 Throughout this section we will write $(u\log u)^2=:V$ for convenience, so that if $1\leq d\leq y^2$ then, as a consequence of 
 \eqref{alphaCompare},
\begin{equation} \label{SmoothCompare}
\Psi(x/d,y)\ll V\Psi(x,y)/d.
\end{equation}

%%%%%%%%%%%%%%%%%%%%%%%%%%
\subsection{Harper's identity and Perron's formula}

\begin{proof} [Proof of Proposition \ref{Errorfaps1q}]  We use Harper's identity:\footnote{Proved by Harper in an unpublished precursor to \cite{GHS}.}
\[
f(n)\log n = \Lambda_f(n)   + \int_{\beta=0}^\infty  n^{-\beta} \sum_{abm=n} \Lambda_f(a) a^\beta \Lambda_f(b)  f(m) d\beta .
\]
Multiplying through by $\overline\chi(n)$ and summing over  $n$ we obtain
\[
\begin{split}
\sum_{x<n\leq 2x} f(n)\overline\chi(n) \log n &=  \int_{\beta=0}^\infty   \sum_{\substack{x<abm\leq 2x \\ a,b\leq y}}  (abm)^{-\beta} \Lambda_f(a)\overline\chi(a) a^\beta \Lambda_f(b)\overline\chi(b)  f(m)\overline\chi(m)   d\beta \\
& + \text{Error},
\end{split}
\]
where the error term is $O(y\log x/\log y)$ from the contribution of the $\Lambda_f(n)$, plus
\[
\ll \sum_{\substack{x<abm\leq 2x \\ a \text{ or } b> y\\ P(abm)\leq y}}   \frac{\Lambda(a)   \Lambda(b)}{\log (x/a+1)} \ll
 \sum_{\substack{ ab \leq 2x \\ a \text{ or } b> y\\ P(ab)\leq y}}   \frac{\Lambda(a)   \Lambda(b)}{ \log (x/a+1)} \Psi(2x/ab,y).
 \]
Now if $a=p^i, b=q^j$ then the term with $a$ replaced by $ap$ or the term with $b$ replaced by $bq$ is no more than this term times a constant $<1$ that depends only on $\alpha$ (by \eqref{alphaCompare1} and  \eqref{alphaCompare2}). Therefore we can restrict our attention to where $a$ is a prime power $<y$ and $b$ is a prime power in $(y,y^2]$. Therefore the above is
\[
\begin{split}
&\ll \sum_{a\leq y < b\leq y^2}  \frac{\Lambda(a)   \Lambda(b)}{ \log x} \Psi(x/ab,y)\ll
\frac{V^{3/2} \Psi(x,y)} {\log x} \sum_{a\leq y < b\leq y^2}  \frac{\Lambda(a)   \Lambda(b)}{ ab} \\
 & \ll  \frac{\Psi(x,y)}{\sqrt{y}} \cdot \frac{V^{3/2}\log y}{\log x}
\ll \frac { \Psi(x,y) }{\varphi(q) (\log x)^{A-1}} ,
\end{split}
\]
 by \eqref{SmoothCompare}, as $x^{1/2}>y\geq V^3 \varphi(q)^2 (\log x)^{2A-2}$. 
 
We use Perron's formula to try to work with the main term, applying it at $x$ and $2x$. Wlog we assume that $m$ is only supported on $(x/y^2,2x]$.
Therefore our integrand equals, for $c=1+1/\log x$ and $x \notin \mathbb{Z}$,
\[
\frac 1{2i\pi} \int_{Re(s)=c}   \sum_{\substack{x/y^2<m\leq 2x \\ a,b\leq y}}  (bm)^{-\beta} \Lambda_f(a)\overline\chi(a)  \Lambda_f(b)\overline\chi(b)  f(m)\overline\chi(m)   \left( \frac{x}{abm} \right)^s   \frac{2^s-1} s ds,
\]
which we will truncate at a height $T$ where 
\[ T=\varphi(q)U \ \text{with}  \ U:= V(\log x)^{A+1+\varepsilon} .
\]
The main term of the integrand then equals
\[
\frac 1{2i\pi} \int_{\substack{s=c+it \\ |t|\leq T}}
\sum_{a \leq y}  \frac{ \Lambda_f(a)\overline\chi(a)  } { a^{s}}  \sum_{ b\leq y}   \frac{ \Lambda_f(b)\overline\chi(b) } { b^{\beta+s}} \sum_{x/y^2<m\leq 2x}   \frac{ f(m)\overline\chi(m) } { m^{\beta+s}}
\frac{(2^s-1)x^s} s ds ,
\]
and, taking  absolute values, we therefore have that the main part of our integral is
\[
\ll x \int_{\substack{s=c+it \\ |t|\leq T}}     \int_{\beta=0}^\infty  \left|  \sum_{a \leq y} \frac{ \Lambda_f(a)\overline\chi(a)  } { a^{s}} \right| \left|  \sum_{ b\leq y}   \frac{ \Lambda_f(b)\overline\chi(b) } { b^{\beta+s}} \right| \left| \sum_{x/y^2<m\leq 2x}   \frac{ f(m)\overline\chi(m) } { m^{\beta+s}}   \right| d\beta    \frac{dt }{1+|t|}.
\]

%%%%%%%%%%%%%%%%%
\subsection{Truncation bounds; the contribution with $|t|>T$}
The usual formula for truncation (see Theorems 5.2 and 5.3 of~\cite{MV-book}) means that for the integral for $n$ up to $x$ we have an error term of
\[
\ll    \sum_{\substack{ a,b\leq y \\ x/y^2<m\leq 2x \\ P(m)\leq y}}  (bm)^{-\beta} \Lambda (a)\ \Lambda (b)  \left( \frac{x}{abm} \right)^{c}  \min\left \{ 1 , \frac 1{T|\log  (x/abm)|} \right\} .
\]
Integrating over $\beta$, the $(bm)^{-\beta}$ contributes $1/\log x$. We have $1/2y^2\leq x/abm\leq y^2$ and so $(x/abm)^{c} \asymp x/abm$, so the error term above is
\[
\ll   \frac x{\log x} \sum_{\substack{ a,b\leq y \\ x/y^2<m\leq 2x\\ P(m)\leq y}}   \frac{\Lambda (a)}a  \frac{\Lambda (b) }b  \frac 1m \min\left \{ 1 , \frac 1{T|\log  (x/abm)|} \right\} .
\]
The contribution of the triples $abm$ with $abm\leq x/2$ or $>2x$ is 
\[
\ll   \frac x{T \log x} \sum_{\substack{ a,b\leq y \\ x/y^2<m\leq 2x\\ P(m)\leq y}}   \frac{\Lambda (a)}a  \frac{\Lambda (b) }b  \frac 1m 
\ll \frac {V\Psi(x,y)(\log y)^3}{T \log x },
\]
by \eqref{SmoothCompare}.
Next we look at those $abm\in (x(1-\frac {k+1}T), x(1-\frac {k}T)]$ for some $k,\ 1\leq k\leq T/2$, which contribute 
\[
\ll   \frac x{k \log x} \sum_{\substack{ a,b\leq y  }}   \frac{\Lambda (a)}a  \frac{\Lambda (b) }b  
\sum_{\substack{\frac x{ab} (1-\frac {k+1}T) <m\leq \frac x{ab} (1-\frac {k}T)\\ P(m)\leq y}} \frac 1m \ll  \frac {V\Psi(x,y)(\log y)^2}{kT \log x},
\]
 since by \eqref{SmoothShorts} we have\[
\sum_{\substack{\frac x{ab} (1-\frac {k+1}T) <m\leq \frac x{ab} (1-\frac {k}T)\\ P(m)\leq y}} \frac 1m \ll 
\frac {\Psi(x/ab,y)}{T\cdot x/2ab} \ll \frac VT \frac{\Psi(x,y)}x .
\]
The same argument works when $abm\in [x(1+\frac {k}T), x(1+\frac {k+1}T))$ for some $k,\ 1\leq k\leq T$. Finally if
$abm\in (x(1-\frac {1}T), x(1+\frac {1}T))$, we get the same bound as with $k=1$ above, and so our total truncation error is 
\[
\ll \frac {V\Psi(x,y)(\log y)^2\log T}{T \log x} \ll \frac {\Psi(x,y)  }{\varphi(q) (\log x)^{A-1}} ,
\]
by our choice of $T$.

%%%%%%%%%%%%%%%%
\subsection{Back to the main term}  Let $T_q=T_q(A, x)$ be the set of characters $\chi \pmod q$ for which 
there exists  $x/y^2<X\leq x$,  for which
\begin{equation} \label{Hyp5}
 |S_f(X,\chi)|   >   { \Psi(X,y)}/{V^2(\log x)^{2A+2+\varepsilon}}  .
\end{equation}
Note that this is a weaker hypothesis than~\eqref{Hyp2} in that the range for $X$ is slightly shortened. Summing what remains of the main term over all $\chi \not\in    T_q$, and dividing through by $\varphi(q)$ we obtain the bound  
\[
\ll x \int_{\substack{s=c+it \\ |t|\leq T }}     \int_{\beta=0}^\infty  \frac 1{\varphi(q)} \sum_{ \chi  \not\in    T_q} 
 \left|  \sum_{a \leq y} \frac{ \Lambda_f(a)\overline\chi(a)  } { a^{s}} \right| \left|  \sum_{ b\leq y}   \frac{ \Lambda_f(b)\overline\chi(b) } { b^{\beta+s}} \right| \left| \sum_{\substack{x/y^2<m\leq 2x\\ P(m)\leq y}}   \frac{ f(m)\overline\chi(m) } { m^{\beta+s}}   \right| d\beta    \frac{dt }{1+|t|}.
\]
%%%%%%%%%%%%%%%%%
\subsection{A more subtle truncation:   $T\geq |t|>U$}  \label{subtle} In this range we extend the character sum
to all $\chi \pmod q$, and use the trivial upper bound, with $\beta'=\beta+\frac 1{\log x}$, to obtain
\begin{equation} \label{Trivial M}
\left|  \sum_{\substack{x/y^2<m\leq 2x\\ P(m)\leq y}}  \frac{ f(m)\overline\chi(m) } { m^{\beta+s}}   \right| \leq 
 \sum_{\substack{x/y^2<m\leq 2x\\ P(m)\leq y}}  \frac{ 1 } { m^{\beta+c}}  \ll \frac{V \Psi(x,y)  } {x \beta' (x/y^2)^{\beta'}} \ll 
 \frac{ V\Psi(x,y) \log x } { x (x/y^2)^{\beta }} .
\end{equation}
Therefore our integral is $V\Psi(x,y) \log x$ times
\[
\ll   \int_{\substack{s=c+it \\ T\geq |t|>U}}        \int_{\beta=0}^\infty  \frac 1{\varphi(q)} \sum_{ \chi \pmod q} 
 \left|  \sum_{a \leq y} \frac{ \Lambda_f(a)\overline\chi(a)  } { a^{s}} \right| \left|  \sum_{ b\leq y}   \frac{ \Lambda_f(b)\overline\chi(b) } { b^{\beta+s}} \right| \frac{d\beta} {  (x/y^2)^{\beta }}      \frac{dt }{1+|t|}.
\]
We now use the Cauchy-Schwarz inequality, so that the square of this integral is 
\[
\leq  \int_{\substack{s=c+it \\ T\geq |t|>U}}        \int_{\beta=0}^\infty  \frac 1{\varphi(q)} \sum_{ \chi \pmod q} 
 \left|  \sum_{a \leq y} \frac{ \Lambda_f(a)\overline\chi(a)  } { a^{s}} \right|^2 \frac{d\beta} {  (x/y^2)^{\beta }}    \frac{dt }{1+|t|}
\]
times
\[
  \int_{\substack{s=c+it \\ T\geq |t|>U}}        \int_{\beta=0}^\infty  \frac 1{\varphi(q)} \sum_{ \chi \pmod q} 
  \left|  \sum_{ b\leq y}   \frac{ \Lambda_f(b)\overline\chi(b) } { b^{\beta+s}} \right|^2 \frac{d\beta} {  (x/y^2)^{\beta }}     \frac{dt }{1+|t|} .
\]
We begin by noting that for $s=c+it$,
\[
\begin{split}
 \frac 1{\varphi(q)} \sum_{ \chi \pmod q} 
\left| \sum_{a\leq y}\frac{ \Lambda_f(a)\overline\chi(a) }{a^{s}}    \right|^2 &=
\sum_{a,b\leq y}\frac{ \Lambda_f(a)\overline{\Lambda_f(b)}}{(ab)^{c}(a/b)^{it}}  \frac 1{\varphi(q)} \sum_{ \chi \pmod q}  \overline\chi(a) \chi(b) \\
&=
\sum_{\substack{a,b\leq y \\ a\equiv b \pmod q}}\frac{ \Lambda_f(a)\overline{\Lambda_f(b)}}{(ab)^{c}} (a/b)^{-it} .
\end{split}
\]
Therefore, as $|\Lambda_f(a)\overline{\Lambda_f(b)}/(ab)^{c}|\leq \Lambda(a) \Lambda(b)/ab$, and as $f$ is only supported on primes $>qL$,
\[
 \int_{|t|=U}^T   \frac 1{\varphi(q)} \sum_{ \chi \pmod q} 
\left| \sum_{a\leq y}\frac{ \Lambda_f(a)\overline\chi(a) }{a^{s}}    \right|^2   \frac{dt }{1+|t|} \leq
\sum_{\substack{qL<a,b\leq y \\ a\equiv b \pmod q}}\frac{ \Lambda(a) \Lambda(b)}{ab} 
\left|   \int_{|t|=U}^T   (a/b)^{-it}   \frac{dt }{1+|t|} \right| .
\]
Now we obtain two different bounds on this integral: Trivially $|(a/b)^{it}|=1$ so the integral is $\ll \log (T/U)\leq \log T$.
Alternatively, working with $U<t\leq T$ (the integral for $-t$ being entirely analogous), for $a\ne b$
\[
  \int_{U}^T   (a/b)^{-it}   \frac{dt }{1+t}  = \frac{(a/b)^{-it}}{i\log(b/a) (1+t)} \bigg|_{U}^T + \int_{U}^T   \frac{(a/b)^{-it}}{i\log(b/a)  }  \frac{dt }{(1+t)^2} \ll \frac{1}{|\log(b/a)| U} .
 \]
 Therefore,   our original $a$-integral is
 \[
\ll  \sum_{\substack{qL<a\leq b\leq y \\ a\equiv b \pmod q}}\frac{ \Lambda(a) \Lambda(b)}{ ab } 
 \min \left\{ \log T,   \frac{1}{|\log(b/a)| U}  \right\}  \cdot 
  \int_{\beta=0}^\infty  \frac{d\beta} {  (x/y^2)^{\beta }}      .
\]
The $\beta$-integral is $\ll 1/\log x$. The sum over the terms with $b=a$ is
\[
\ll  \sum_{qL<a \leq y }\frac{ \Lambda(a)^2}{ a^2 } 
\log T  \ll    \frac{\log q+\log\log x}{qL} \log T \ll
\frac 1{V\varphi(q)(\log x)^{A-1}}.
\]
For those $b$ in the interval $a<b<2a$ we may write $b=a+nq$ with $1\leq n\leq a/q$.
Forgetting these are meant to be prime we get an upper bound
\begin{align*}
&\ll  \sum_{qL<a\leq y}\frac{ \Lambda(a) \log a}{ a^2 } 
\sum_{n=1}^{[a/q]} \min \left\{ \log T,   \frac{a}{nqU}  \right\}  \\
&\ll \sum_{qL<a\leq y}\frac{ \Lambda(a) \log a}{ a^2 } \cdot \frac a{qU} \log(U\log T)\\
&\ll \frac{ \log(U\log T)}{qU} (\log y)^2\ll \frac 1{V\varphi(q)(\log x)^{A-1}} .
\end{align*}
Next we partition the remaining $b$-values according to
$2^ka<b\leq 2^{k+1}a$ for $1\leq k\leq {K-1}$ where $2^K\asymp y/a$, so our sum becomes
\begin{align*}
&\ll \sum_{qL<a\leq y} \frac{ \Lambda(a)  }{ a } \sum_{k=1}^{K-1}   \frac{1}{k U}   \sum_{\substack{2^ka<b\leq 2^{k+1}a \\ b\equiv a \pmod q}}\frac{  \Lambda(b)}{ b } \\
&\ll \sum_{qL<a\leq y} \frac{ \Lambda(a)  }{ a } \sum_{k=1}^{K-1}   \frac{1}{k   U\phi(q)}   
\ll \frac{\log y}{    U\phi(q)} \log K\ll \frac 1{V\varphi(q)(\log x)^{A-1}} 
\end{align*}
using the Brun-Titchmarsh Theorem. 

Collecting all this together we see that  the $a$-integral is $\ll 1/(V\varphi(q)(\log x)^{A})$. The same argument works for the $b$-integral (where we use that $c+\beta>1$ rather than $c>1$), and so the total contribution of this part of the $t$-integral is 
\[
\ll \frac{ \Psi(x,y)  }{ \varphi(q)(\log x)^{A-1} } ,
\]
which is acceptable.

%%%%%%%%%%%%%%%%%%%%%%%
\subsection{Consequences of~\eqref{Hyp5}}
If $\chi\not\in  T_q$ then, by \eqref{Hyp5},  we have
\[
\begin{split}
\sum_{ x/y^2<m\leq X}     f(m)m^{-it} \overline\chi(m) &= \int_{x/y^2}^X \frac{dS_f(h,\chi)  }{h^{it}} = 
\frac{ S_f(h,\chi) }{h^{it}}\bigg|_{x/y^2}^X + it \int_{x/y^2}^X \frac{ S_f(h,\chi) }{h^{1+it}}dh \\
& \ll  
\frac { \Psi(X,y)}{V^2(\log x)^{2A+2+\varepsilon}}   +\frac { |t|}{V^2(\log x)^{2A+2+\varepsilon}}  \int_{x/y^2}^X \frac{\Psi(h,y)}h dh  \\
& \ll \frac { (1+|t|)  \Psi(X,y)}{V^2(\log x)^{2A+2+\varepsilon}}  ,
\end{split}
\]
as $\Psi(h,y)/h\ll (\Psi(X,y)/X)(X/h)^{1-\alpha}$, and $\int^X_{x/y^2} (X/h)^{1-\alpha} dh \ll X$.

Calling this new sum $S(X)$ then, for $\delta=c-1+\beta\geq 1/\log x$ where $ \beta\geq 0$, we have, using \eqref{SmoothCompare},
\begin{equation} \label{Hyp}
\begin{split}
 \sum_{x/y^2<m\leq 2x}   \frac{ f(m)\overline\chi(m) } { m^{c+\beta+it}}  &=
 \int_{x/y^2}^{2x}   \frac{dS(X)}{X^{1+\delta}} 
 \ll  \frac {(1+|t|)V \Psi(x,y)/x }{V^2(\log x)^{2A+2+\varepsilon}}     \frac{1}{\delta (x/y^2)^{\delta}}\\
&  \ll  \frac {(1+|t|) \Psi(x,y)/x}{V(\log x)^{2A+1+\varepsilon}  x^{\eta\beta}},
  \end{split}
\end{equation}
for $1\leq y\leq x^{(1-\eta)/2}$.

%%%%%%%%%%%%%%%%%
\subsection{Bounds under~\eqref{Hyp5} for  $ |t|\leq U$}   If $\chi\not\in T_q$ then, by \eqref{Hyp}, our remaining main term is 
\[
\ll  \frac {\Psi(x,y)}{V(\log x)^{2A+1+\varepsilon} } \int_{\substack{s=c+it \\ |t|\leq U}}  \left(   \int_{\beta=0}^\infty x^{-\eta\beta}    \left|  \sum_{a \leq y} \frac{ \Lambda_f(a)\overline\chi(a)  } { a^{s}} \right| \left|  \sum_{ b\leq y}   \frac{ \Lambda_f(b)\overline\chi(b) } { b^{\beta+s}} \right|   d\beta\right)   dt.
\]
We now sum over all such $\chi\not\in T_q$ and divide by $\varphi(q)$, so that the inner integral becomes
\[
 \int_{\beta=0}^\infty x^{-\eta\beta}    \frac 1{\varphi(q)} \sum_{ \chi \pmod q}   \left|  \sum_{a \leq y} \frac{ \Lambda_f(a)\overline\chi(a)  } { a^{s}} \right| \left|  \sum_{ b\leq y}   \frac{ \Lambda_f(b)\overline\chi(b) } { b^{\beta+s}} \right|   d\beta.
\]
We follow the plan of section \ref{subtle}, Cauchying and then computing the separated integrals, but here we do not bring the integral over $t$ inside, simply using the bound $|(a/b)^{it}|=1$. 
Therefore the square of this inner integral is, by Cauchy-Schwarz,
\[
\leq  \int_{\beta=0}^\infty x^{-\eta\beta}  \frac 1{\varphi(q)} \sum_{ \chi \pmod q} 
\left| \sum_{a\leq y}\frac{ \Lambda_f(a)\overline\chi(a) }{a^{s}}    \right|^2  d\beta 
\]
times the analogous integral for $b$. 
Expanding the square in the first integral we have 
\[
 \frac 1{\varphi(q)} \sum_{ \chi \pmod q} 
\left| \sum_{a\leq y}\frac{ \Lambda_f(a)\overline\chi(a) }{a^{s}}    \right|^2  
\leq  \sum_{\substack{q^2<a,b\leq y \\ a\equiv b \pmod q}}\frac{ \Lambda (a) {\Lambda (b)}}{ab}   
\]
as $qL\geq q^2$,
and then the integral over $\beta$ is $\ll 1/\log x$. We get to the same point with the $b$-integral.
 By the Brun-Titchmarsh theorem we have
\[
\sum_{\substack{q^2<a,b\leq y \\ a\equiv b \pmod q}}\frac{ \Lambda (a) {\Lambda (b)}}{ab} =
\sum_{\substack{q^2<a \leq y }}\frac{ \Lambda (a)  }{a}
\sum_{\substack{q^2<b\leq y \\ b\equiv a \pmod q}}\frac{  {\Lambda (b)}}{b} \ll \frac{(\log y)^2}{\varphi(q) }
 \ll \frac{(\log x)^2}{\varphi(q) }.
\]
Our remaining main term, summed over all $\chi \not\in   T_q$, divided through by $\varphi(q)$, is therefore
\[
\ll  \frac {  \Psi(x,y)}{V(\log x)^{{2A+1+\varepsilon}} } \int_{\substack{s=c+it \\ |t|\leq U}}   \frac{\log x}{\varphi(q) } dt \ll  \frac  {  \Psi(x,y)}{\varphi(q)(\log x)^{{A-1}} }.
\]

%%%%%%%%%%%%%%%%%%%%%%%
\subsection{Combining results and partial summation}
Combining the above we have therefore proved  that
\[
 \frac 1{\varphi(q)}  \sum_{ \substack{\chi \pmod q \\ \chi \not\in   T_q(A,x) }}  \left|\sum_{x<n\leq 2x} f(n)\overline\chi(n) \log n\right|
\ll   \frac{ {  \Psi(x,y)}}{\varphi(q) (\log x)^{A-1}} ,
\]
provided $ x^{(1-\eta)/2}\geq y\geq V^3q^2(\log x)^{2A-2}$. Select integer $k$ so that $2^{k\alpha}\asymp q(\log x)^{A}$. Since
\[
\bigcup_{j=1}^k \ T_q(A,x/2^j)  \subset \mathcal T_{q,f}(A,2^ky^2),
\]
we deduce that 
\[
 \frac 1{\varphi(q)}  \sum_{ \substack{\chi \pmod q \\ \chi \not\in   \mathcal T_q(A,2^{k}y^2) }}  \left|\sum_{X/2<n\leq X} f(n)\overline\chi(n) \log n\right|
\ll   \frac{ { \Psi(X,y)}}{\varphi(q) (\log x)^{A-1}} ,
\]
for $X=x,x/2,x/4,\ldots, x/2^{k-1}$. Summing these up, as
\[ 
\sum_{j=0}^{k-1} \Psi(x/2^j,y) \ll \Psi(x ,y) \sum_{j=0}^{k-1} \frac 1{2^{j\alpha}} \ll \Psi(x ,y),
\]
and as $\Psi(x/2^k,y)\ll \Psi(x,y)/2^{k\alpha} \ll \Psi(x,y)/q(\log x)^{A} $, we deduce that
\[
 \frac 1{\varphi(q)}  \sum_{ \substack{\chi \pmod q \\ \chi \not\in   \mathcal T_q(A,2^{k}y^2)  }}  \left|\sum_{ n\leq x} f(n)\overline\chi(n) \log n\right|
\ll   \frac{   \Psi(x,y) }{\varphi(q) (\log x)^{A-1}} ,
\]
provided $y\geq V^3q^2(\log x)^{2A-2}$ and $y^2(q(\log x)^A)^{1/\alpha}\leq x^{1-\eta}$.

We deduce that 
\[
 \frac 1{\varphi(q)}  \sum_{ \substack{\chi \pmod q \\ \chi \not\in   \mathcal T_q(A,2^{2k}y^2)  }}  \left|\sum_{ n\leq X} f(n)\overline\chi(n) \log n\right|
\ll   \frac{\Psi(X,y) }{\varphi(q) (\log x)^{A-1}} ,
\]
for all  $X$ in the range $x/(q(\log x)^A)^{1/\alpha}\leq X\leq x$, provided 
\[ y\geq V^3q^2(\log x)^{2A-2} \ \text{ and } y(q(\log x)^A)^{1/\alpha}\leq x^{(1-\eta)/2}. \] 
Then \eqref{smallsupport} holds by partial summation, as 
$\int_2^x (\Psi(t,y) /t)dt \ll \Psi(x,y)$.

Now as $y\geq (\log x)^{1+\varepsilon}$ we have $V\leq (\log x)^2$, so the first range hypothesis holds
when $q\leq y^{1/2} /(\log x)^{ A+2}$. If $y\leq x^{1/2-\eta}$ then the second hypothesis holds when
$q \leq x^{ \eta\alpha/2}/(\log x)^A$. Now this follows from the first range hypothesis if $y\leq x^\eta$; if $y$ is larger then this simplifies to
$q \ll x^{ \eta/2}/(\log x)^A$. 
 \end{proof}
 
 %%%%%%%%%%%%%%%%%%%%%%%%%%
 \subsection{Proof of Corollary \ref{ErrorfapsA=0}} We take $T=\varphi(q)$ in the same argument, and  extend the range   in section \ref{subtle} to $  |t|\leq T$, getting rid of the need for the final range. In the calculations in that section we replace $U$ by $U+1$ in our bounds, and end up with a contribution $\ll V\Psi(x,y)(\log x)^{2+\varepsilon}/\varphi(q) $. With our new choice of $T$, the contribution with $|t|>T$ also is bounded by the same quantity, so, going through the same process, we end up with the upper bound $\ll V\Psi(x,y)(\log x)^{1+\varepsilon}/\varphi(q)\ll  \Psi(x,y)(\log x)^{3+\varepsilon}/\varphi(q) $ on our sum, as desired.

  %%%%%%%%%%%%%%%%%%%%%%%%%%
 \subsection{Extension to $f$ supported on any primes $\leq y$}
 Given $f$ supported on primes $p \leq y$, define the multiplicative function $g$ with $g(p^k)=0$ if $p\leq qL$, and 
 $g(p^k)=f(p^k)$ if $p> qL$.  Therefore if  $\psi \pmod r$ is a primitive character that  induces $\chi \pmod q$ then 
 $g(m) \overline{\chi}(m) = f(m) \overline{\psi}(m) 1_{(m,P)=1}$ where $P=\prod_{p\leq qL} p$.
 Let $h(.)$ be the multiplicative function which is supported only on the prime powers $p^k$, for which $p\leq qL$ but does not divide $r$, and defined so that $(h*f\overline\psi)(p^k)=0$ if $k\geq 1$. Thus $h*f\overline\psi = g\overline\chi$  and so 
\begin{equation} \label{Ident}
S_{g}(x,\chi) = \sum_{m\leq x} h(m) S_{f}(x/m,\psi) .
\end{equation}
We note that $h\psi\in \mathcal C$ as $f\in \mathcal C$, so that each $|h(m)|\leq 1$.

 \begin{proof}[Deduction of Proposition \ref{Errorfaps1} from Proposition \ref{Errorfaps1q}]
Assume that   $\psi$  is a primitive character mod $r$ with $r\leq R$,  such that
\[ |S_{f}(X ,\psi)| \leq \Psi(X,y)/V^2(\log x)^C\] for all $X$ in the range $x/z_0< X\leq x$ where $z_0= z_1 (qL)^{\log\log x}$ and $z_1 = \exp((\log\log x)^2)$.
Now let $x/z_1\leq X\leq x$. Then $|S_{f}(X/m,\psi)| \leq \Psi(X/m,y)/V^2(\log x)^C$ for $m\leq (qL)^{\log\log x}$
and so, by \eqref{Ident} and \eqref{alphaCompare}, we obtain 
\[
\begin{split}
|S_{g}(X,\chi)| & \ll  \ \frac {\Psi(X,y)}{V^2(\log x)^C} \sum_{\substack{m\leq (qL)^{\log\log x} \\ P(m)\leq qL}}   \frac 1{m^\alpha}+ \Psi(X,y)\sum_{\substack{(qL)^{\log\log x}<m\leq X\\ P(m)\leq qL}}  \frac 1{m^\alpha} \\
 & \ll  \ \frac {\Psi(X,y)}{V^2(\log x)^C}  \exp \left( \sum_{p\leq qL}   \frac 1{p^\alpha}\right) + \Psi(X,y)\sum_{\substack{(qL)^{\log\log x}<m\leq X\\ P(m)\leq qL}}  \frac 1{m^\alpha} .
   \end{split}
\]  
Now if $q\leq R$ then
\begin{equation} \label{Restrictq}
\log q< \log L \leq  \frac{\log y \log\log\log x}{3\log u}
\end{equation}
(where the factor $1/3$ can be replaced by $1/2-\ee$). Therefore $(qL)^{1-\alpha} \leq (\log\log x)^{2/3+o(1)}$ as $y^{1-\alpha}=u^{1+o(1)}$, and so
\[
\sum_{p\leq qL}   \frac 1{p^\alpha} \ll \log 1/\alpha + \frac{(qL)^{1-\alpha} }{(1-\alpha)\log qL} \leq (\log\log x)^{2/3+o(1)}.
\]
Also if $M=(qL)^v$ with $v\geq  \log\log x$ then
\[
\sum_{\substack{m\sim M \\ P(m)\leq qL}}  \frac 1{m^\alpha} \ll M^{1-\alpha} v^{-v+o(v)} \ll \left( (qL)^{1-\alpha}/v^{1+o(1)}\right)^v \ll (\log\log x)^{-\frac 13 (v+o(v))} .
\]
Combining this information yields  that 
\[
|S_{g}(X,\chi)| \ll \frac {\Psi(X,y)}{V^2(\log x)^{C+o(1)}\ }
\]
 Therefore  \eqref{Hyp2} fails  taking $C= 2A+2+2\varepsilon$, that is  $\chi\not\in \mathcal T_{q,g}(A,z_1)$.
Proposition \ref{Errorfaps1q} then implies that, for $z_1=y^2(q^2(\log x)^{2A})^{1/\alpha}$,
\begin{equation} \label{smallsupport2}  
    \sum_{ \substack{\chi \pmod q \\ \chi \not\in  \mathcal T_q^*(A)}} \left|  S_g(x,\chi) \right| \ll    \frac{\Psi(x,y)} {(\log x)^{A}} .
\end{equation}
  
 Write each $n\leq x$ as $n=n'N$ with $P(n')\leq qL$, and
$p|N \implies p> qL$, so that $f(n)=f(n')g(N)$, and therefore (after renaming $n'$ by $n$)
\[
S_f(x,\chi) = \sum_{\substack{n\geq 1\\ P(n)\leq qL}} f(n) S_g(x/n,\chi) .
\]

Now assume that  $\psi\not\in \Xi(C)$, so that 
 $|S_{f}(X ,\psi)| \leq \Psi(X,y)/V^2(\log x)^C$ for all   $x/z_2< X\leq x$ where $z_2= z_0 (qL)^{\log\log x}$. Assuming that $x/z_2 \geq x^{\eta}$, \eqref{smallsupport2} holds with $x$ replaced by $X$, for every $X$ in the range $x/(qL)^{\log\log x}\leq X\leq x$.
Then  we have, by \eqref{smallsupport2}, Corollary \ref{ErrorfapsA=0} and \eqref{alphaCompare},
\[
 \begin{split}
   \sum_{ \substack{\chi \pmod q \\ \chi \not\in   \Xi_q(C)}} \left|  S_f(x,\chi) \right| &\leq
   \sum_{\substack{n\leq (qL)^{\log\log x}\\ P(n)\leq qL}}  \sum_{ \substack{\chi \pmod q \\ \chi \not\in   \Xi_q(C) }}     |S_g(x/n,\chi) |
   +    \sum_{\substack{n> (qL)^{\log\log x}\\ P(n)\leq qL}}  \sum_{ \substack{\chi \pmod q   }}     |S_g(x/n,\chi) | \\
   &\ll    \frac{\Psi(x,y)} {(\log x)^{A}}  \sum_{\substack{n\leq (qL)^{\log\log x}\\ P(n)\leq qL}}   \frac 1{n^\alpha}   + \Psi(x,y)(\log x)^{3+\varepsilon}   \sum_{\substack{n> (qL)^{\log\log x}\\ P(n)\leq qL}}      \frac 1{n^\alpha} \\
    &\ll    \frac{\Psi(x,y)} {(\log x)^{A+o(1)}}   ,
         \end{split}
\]  
proceeding for these two sums just as we did above.

The claimed estimate then follows by taking $A=B+\varepsilon$ and so $C= 2B+2+4\varepsilon$. To guarantee that $x/z_2\geq x^{\eta}$ we need that 
\[
y(q(\log x)^A)^{1/\alpha} (qL)^{\log\log x} \leq x^{(1-\eta)/2},
\]
and we already have the restrictions $y\geq V^3q^2(\log x)^{2A-2}$ and \eqref{Restrictq}. 
% We claim that these conditions implies that $q\leq y^{1/3}$:\  Now $\log q\leq \frac{\log x}{3\log\log x}$, and this is $\leq \frac 13 \log y$ if $\log y\geq \frac{\log x}{\log\log x}$. On the other hand if $\log y> \frac{\log x}{\log\log x}$ then $u<\log\log x$ and $q<y^{1/3}$ by \eqref{Restrictq}.  The condition $y\geq V^2q^2(\log x)^{2A-2}$ therefore follows when $y\geq (\log x)^{6A+6}$.
These all follow from the bounds on $q$ and $y$ given in the hypothesis. All that remains now is to bound $|\Xi|$.
\end{proof}

\subsection{Bounding the size of $\Xi(C)$} \label{LargeS}
The large sieve gives that 
\[
\sum_{r\leq \sqrt{X}} \sum_{\psi \text{ primitive mod } r} \left|  \sum_{  m\leq X}     f(m) \overline\psi(m) \right|^2 \ll X \Psi(X,y) .
\]
Therefore, for a given $X$ in the range  $ x^{\eta}\leq X\leq x$,  the number of exceptional $\psi$ with $r\leq R$ is $\ll (X/\Psi(X,y)) (\log x)^{2C} u^{O(1)}\ll u^{u+o(u)} (\log x)^{2C}$. 

If \eqref{Hyp3} holds for $X$, and $|X'-X|\ll X/V^4(\log x)^{C}$ then \eqref{Hyp3} holds for $X'$. So to obtain the full range for $X$ we sample at $X$-values spaced by gaps of length $\gg X/V^4(\log x)^{C}$. Therefore the 
  number of exceptional $\psi$ with $r\leq R$   in our range  is $\ll u^{u+o(u)} (\log x)^{3C+1}$,   as claimed.

%%%%%%%%%%%%%%
\section{Deduction of several Theorems}\label{sec:deduce-many-thms}
 %%%%%%%%%%%%%%%%
%%%%%%%%%%%%%%
\subsection{Proof of  two Theorems on $f$ in arithmetic progressions}  
 
\begin{proof} [Proof of Theorem \ref{Cor:Result2} ]   Let $J=k+1$ so that $\frac 12 \leq B=1-\varepsilon<1-1/\sqrt{J}$ in Corollary~\ref{MathResult2Cor**}. Let $w=(\log x)^{2B}$ and 
 $ \exp(C(\log\log x)^2)<Q\leq x^{1/2-\delta}$, with $C>4B^2$. Let $\Xi=\{ \psi_1,\ldots, \psi_{J-1}\}$ so that 
 $\Delta_k=\Delta_{\Xi}$ and \eqref{CharHyp} holds. Then, by Corollary \ref{MathResult2Cor} and Corollary \ref{MathResult2Cor**} we deduce that 
\[ 
 \sum_{q \sim Q}   \max_{(a,q )=1}  \left| \Delta_{\Xi}(f,x;q,a) \right| 
  \ll    \frac x {(\log x)^{B}} .
\]
\end{proof}

%%%%%%%%%% 
\begin{proof} [Deduction of Corollary \ref{Thm: SW}] 
We apply Proposition \ref{Using SW} whose hypotheses are satisfied using  Theorem   \ref{Cor:Result2} for any $C>0$, suitably adjusting the value of $\varepsilon$.   
\end{proof}

%%%%%%%%%%%%%%% 
\subsection{On average, supported only on the smooths }
%%%%%%%%%%%%%%% 

\begin{proof} [Proof of Theorem \ref{Keep Xi}]
We proceed as in the proof of Theorem~\ref{MathResult2}, but now with the parameters $Y=\exp((\log\log x)^{2}), Z=y$ and  $w=(\log x)^{2A}$. Since $Q\leq x^{1/2}/y^{1/2}(\log x)^A$, Lemma \ref{BilinBound} implies that
\[  E_{\text{bilinear}}    \ll x \left(\frac{1}{(w\log w)^{1/2}} + \frac{Y^{0.1} + (\log x)^{1/2}}{\pi(Y)^{1/2}}\right) +QZ^{1/2}x^{1/2} \ll \frac x{(\log x)^A}. \]
Moreover, we have
\[ \frac{T}{Y \log Y} \ll \frac x{(\log x)^A}.\]
The key difference here is in the $E_{\text{sieve}}$ term. Since $f$ is only supported on the $y$-smooth integers, we now have
\[ E_{\text{sieve}} \leq \sum_{n \leq x} | F(n)| \1_{(n,\prod_{p>Y}p) = 1} \leq \sum_{q\sim Q} \left(\Psi(x,Y,a_q,q) + \frac 1{q_r}  \Psi(x,Y,a_q,q_s)\right).
\]
By \eqref{BVsmooth}, followed by  \eqref{SmoothUB}, this quantity is 
\[
\ll \sum_{q\sim Q} \frac 1{q} \Psi(x,Y)  + \frac x{(\log x)^A}  \ll  \frac x{(\log x)^A}.
\]
Thus from Proposition~\ref{prop:ramare} we deduce the following variant on Theorem \ref{MathResult2}:
\[
  \sum_{q \sim Q} \max_{(a,q)=1} \left|  \sum_{\substack{n \leq x\\ n \equiv a\pmod{q}}} f(n) -\frac{1}{ q_r} 
\sum_{\substack{n \leq x\\ n \equiv a\pmod{q_s}}} f(n)  \right| \ll   \frac x{(\log x)^A}.
 \]
 
We will now apply Proposition \ref{MainCor}  to the sums of $f(n)$ over $n$ in the $w$-smooth arithmetic progressions $a \pmod{q_s}$. Therefore the only relevant $\chi\in \Xi$ will be those with $w$-smooth conductors. We select $B$ so that $6B+7<2A$, which implies that 
 \eqref{CharHyp} holds, provided $x^{1/2-\ee}\geq Q\geq   x^{  \varepsilon/(2\log\log x)}$.
 By Corollary \ref{MathResult2Cor} suitably amended with this input, we then deduce that
\begin{equation}\label{eq:boum0}
 \sum_{q \sim Q} \max_{(a,q)=1} \left|  \Delta_{\Xi}(f,x;q,a)  \right|  \leq 
  \frac { 1}{\log w}   
  \sum_{\substack{ q_s\leq 2Q \\ P(q_s)\leq w}}    \max_{(a,q_s)=1}     | \Delta_{\Xi}(f,x;q_s,a) | + O\left(  \frac x{(\log x)^A}\right) .
 \end{equation}

By Proposition \ref{MainCor} we have, for $R= x^{  \varepsilon/(3\log\log x)}$
\begin{equation}\label{eq:boum7}
 \frac { 1}{\log w}   
  \sum_{\substack{ q_s\leq R \\ P(q_s)\leq w}}    \max_{(a,q_s)=1}     | \Delta_{\Xi}(f,x;q_s,a) | \ll
   \frac { 1}{\log w}   
  \sum_{\substack{ q_s\leq R \\ P(q_s)\leq w}}   \frac 1{\varphi(q_s)}  \frac{x} {(\log x)^{B}} \ll \frac{x} {(\log x)^{B}} .
 \end{equation}
For the remaining $q_s$ we use the trivial upper bound $ | \Delta_{\Xi}(f,x;q ,a) |\ll (1+|\Xi_{q}|)x/q$, to obtain
\[
\begin{split}
& \frac { 1}{\log w}   
  \sum_{\substack{ R\leq q_s\leq 2Q \\ P(q_s)\leq w}}    \max_{(a,q_s)=1}     | \Delta_{\Xi}(f,x;q_s,a) | \ll
  \frac { x}{\log w}   
  \sum_{\substack{ R\leq q\leq 2Q \\ P(q)\leq w}} \frac {1+|\Xi_{q}|}{q}  \\
  & \leq
     \frac { x}{\log w}   \left(  \left( 1+  \sum_{\substack{\chi\in \Xi \\ r_\chi\leq Y} }  \frac 1{r_\chi} \right) \sum_{\substack{ q\geq R/Y \\ P(q)\leq w}} \frac 1{q} + \sum_{\substack{\chi\in \Xi \\ r_\chi>Y} }  \frac 1{r_\chi} \
      \sum_{\substack{ q\geq 1  \\ P(q)\leq w}} \frac {1}{q} \right) \ll \frac{x} {(\log x)^{A}},
      \end{split}
 \]
 using  \eqref{CharHyp}, and that $ \sum_{\chi\in \Xi,\ r_\chi>Y } 1/{r_\chi} \leq |\Xi|/Y$, and   our estimates on smooth numbers (as $R/Y>w^{\log\log x}$). We therefore deduce \eqref{eq:boum5}.
 \end{proof}

 %%%%%%%%%%%%%%%%%%%%
 
  \begin{proof} [Deduction of Corollary \ref{MathResult3}]  We apply Proposition \ref{Using SW} whose hypotheses are satisfied using  Theorem   \ref{Keep Xi} for any $C>6B+7$.   
\end{proof}

 %%%%%%%%%%%%%%%%%%
%%%%%%%%%%%%%%%%%%
\section{Breaking the $x^{1/2}$-barrier}
%%%%%%%%%%%%%%%%%%

To break the $x^{1/2}$-barrier, we need to reduce the $Q^2$ in the upper bound in Lemma~\ref{BilinBound}. This term arises in the estimates that the number of terms in various arithmetic progressions is the length of that progression plus $O(1)$.  Following ideas from~\cite{BFI1,FI,Zh} and others, we will be more precise about all those ``$O(1)$''s by using Fourier analysis  to obtain some cancellation

We will be able to do this when   the residue classes $a_q$ do not vary with $q$:

 Let $1\leq |a|\ll  Q\leq x^{\frac{20}{39}-\delta}$ for some small fixed $\delta>0$, and consider the residue classes $a\pmod q$ when $q \sim Q$ and $(a,q) = 1$. Let $F$ be the function defined by
\[ F(n) = \sum_{\substack{q \sim Q \\ (a,q)=1}} \xi_q \left(\1_{n \equiv a\pmod{q}} - \frac{1}{q_r} \1_{n \equiv a\pmod{q_s}}\right) \]
when $n \neq a$, and set $F(a) = 0$. Writing $\tau$ for the divisor function,  we have $|F(n)| \ll \tau(n-a) \leq x^{o(1)}$ for each $n \neq a$ and $n \leq x$, and so $ \|F\|_{\infty}\leq x^{o(1)}$. Moreover we have the $L^2$-bound:
\begin{equation*}  
\sum_{n \leq x} |F(n)|^2 \ll \sum_{n \leq 2x} \tau(n)^2 \ll x(\log x)^3. \end{equation*}

%%%%%%%%%%%%%%%%%%%%%%%%%%
\begin{proposition}[Bilinear term beyond the $x^{1/2}$-barrier]\label{prop:bilinear-beyond1/2}
Let $F$ be defined as above, and fix $\delta>0$.
If $1\leq |a|\ll  Q\leq x^{\frac{20}{39}-\delta}$   and $P\leq x^\varepsilon$ for some very small $\varepsilon>0$ (depending on $\delta$) then
\[ \E_{p,p'\sim  P} \left| \sum_{m \leq \min(x/p, x/p')} F(pm) \overline{F(p'm)} \right|  \ll \frac{x}{P}\left(\frac{1}{w\log w} + \frac{P^{0.1}+(\log x)^3}{\pi(P)}\right) .   \]
\end{proposition}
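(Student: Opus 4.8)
The plan is to mirror the proof of Lemma~\ref{BilinBound}, but replace the crude ``number of integers in a progression equals the length plus $O(1)$'' step by Fourier expansion of the indicator functions, so that the $Q^2$ error becomes a sum of bilinear Kloosterman-type fragments that can be controlled by the Bettin--Chandee estimate. Concretely, after opening up the two copies of $F$ and changing the order of summation, the inner sum over $m$ counts $m$ in a progression modulo $[q,q']$ (or its smooth variants) with $m\leq \min(x/p,x/p')$, subject to the compatibility condition $p'a\equiv pa\pmod{(q,q')}$ --- which, crucially, since $a_q=a$ is fixed, becomes simply $a(p'-p)\equiv 0\pmod{(q,q')}$. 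For fixed $q,q',p,p'$ write the count as main term $S(p,p',q,q')$ plus $\sum_m\bigl(\1_{m\equiv \cdot}-\tfrac1{[q,q']}\bigr)$, and Poisson-sum (or expand via additive characters $e(hm/[q,q'])$) in $m$: the zero frequency gives the main term, the nonzero frequencies give exponential sums $\sum_{p,p'\sim P} e\bigl(h\,\overline{(\cdots)}\,pp'/(\cdots)\bigr)$ twisted over two variables, which is exactly the shape handled by \cite{BC}.

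First I would carry over Cases 0, 1, and 2 from the proof of Lemma~\ref{BilinBound} verbatim to handle the contribution of the main terms $g_0(p,p',q,q')$; this is where the $\frac{1}{w\log w}+\frac{P^{0.1}+\log x}{\pi(P)}$ terms come from, and nothing changes because $a$ being fixed only restricts the sets of $(q,q',p,p')$ that occur, which can only help. Then the new work is to bound the total contribution of the $O(1)$ errors. I would split the frequency sum $h$ dyadically, $|h|\sim H$ with $H$ running up to roughly $[q,q']/(\text{length})\ll [q,q']Q/x\ll Q^2\cdot(\text{small})$; for each dyadic block the sum over $q,q'\sim Q$ (with $d=(q,q')$ fixed, and $q=d r$, $q'=d r'$ with $(r,r')=1$) and $p,p'\sim P$ of the complete/incomplete Kloosterman sums is estimated by Bettin--Chandee. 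The condition $P\leq x^\varepsilon$ and $Q\leq x^{20/39-\delta}$ is precisely what makes the ranges fall inside the admissible region of \cite{BC} (the exponent $20/39$ is the one appearing in Green's theorem, quoted in the introduction). The $\|F\|_\infty\leq x^{o(1)}$ and $\sum_{n\leq x}|F(n)|^2\ll x(\log x)^3$ bounds are used to absorb the contributions from ``diagonal'' terms $p=p'$, from $q$ or $q'$ sharing a factor with $p$ or $p'$, and from the tails where Fourier completion is lossy --- this is the source of the $(\log x)^3$ replacing $\log x$ in the final bound.

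The main obstacle I expect is the bookkeeping around the \emph{smooth-variant} terms in $g_0$: we have four sums $S(p,p',q,q')$, $S(p,p',q_s,q')$, $S(p,p',q,q_s')$, $S(p,p',q_s,q_s')$, each of which must be Fourier-expanded at its own modulus ($[q,q']$, $[q_s,q']$, etc.), and one must check that the resulting four families of exponential sums are each of Bettin--Chandee type and that the moduli, still of size $\asymp Q^2$ up to smooth factors, stay within range. A secondary technical point is that the congruence $a(p'-p)\equiv 0\pmod d$ forces $d\mid a(p'-p)$; since $|a|\ll Q$ is fixed we can write $d=d_1 d_2$ with $d_1\mid a^\infty$ (bounded) and $d_2\mid (p'-p)$, and then $d_2\leq |p'-p|\ll P$, so in fact $d\ll P\cdot a^{o(1)}$ is \emph{small} --- this is what ultimately tames the $d$-sum and replaces the problematic $Q^2$; I would make sure this reduction is done cleanly before invoking \cite{BC}. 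Finally, one must verify that summing over the dyadic blocks in $H$, $P$, and $d$ only costs logarithmic factors, comfortably absorbed into the stated $(\log x)^3$.
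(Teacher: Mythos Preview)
Your outline matches the paper's proof in structure, but two technical points are glossed over that are essential to make it work. First, the paper inserts a smooth cutoff $\psi(m/M)$ (with $\psi$ approximating $\1_{[0,1]}$ to accuracy $\eta=x^{-\sigma}$) \emph{before} Poisson-summing; this is what truncates the frequency sum at $H\asymp x^{2\sigma}Q^2/(dM)$, and without it the tail of the $h$-sum does not decay fast enough. Second, and more important, the bilinear variables fed into Bettin--Chandee are not $p,p'$ (those ranges are far too short, $\leq x^\varepsilon$) but $\ell=q/d$ and $\ell'=q'/d$, of size $\sim Q/d$: the paper fixes $p\neq p'$, notes that $(a,q)=(a,q')=1$ forces $(a,d)=1$ so the compatibility condition is simply $d\mid p-p'$ (your $d_1\mid a^\infty$ factor is always $1$), and then uses a reciprocity step to rewrite the Poisson output as $\e_{p'\ell}(kh\cdot\overline{p\ell'})$ times a smooth weight in an auxiliary variable $u$. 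Bettin--Chandee is applied to $\sum_{\ell,\ell'}$ for each fixed $(p,p',d,h,u)$, giving each error $E_i\ll x/P^2$ pointwise in $p,p'$; the sum over $d\mid p-p'$ costs only $x^{o(1)}$. The $(\log x)^3$ in the statement comes purely from the diagonal $p=p'$ via the $L^2$-bound on $F$, exactly as you say; Cases 0--2 from Lemma~\ref{BilinBound} carry over unchanged for the main terms.
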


We begin by noting that the contribution from those terms with $p=p'$ is bounded by
\[ 
\frac{1}{\pi(P)^2} \sum_{p\sim P} \sum_{m \leq x/p}|F(pm)|^2 \ll 
\frac{1}{\pi(P)^2} \sum_{p\sim P} \sum_{\substack{ n \leq 2x\\ n\equiv -a \pmod p}} \tau(n)^2
\]
as $|F(pm)|\ll \tau{pm-a}$. Since $\tau(n)^2$ is a positive valued multiplicative function, this sum is easily bounded by Shiu's Theorem, \cite{Shiu}, to be
\[
\ll \frac{x}{P} \cdot \frac{(\log x)^3}{\pi(P)}, 
\]
which is acceptable. Now fix $p \neq p'$ and analyze the inner sum over $m$. We insert a smooth weight into the sum over $m$: Let $\eta > 0$ be a small parameter to be chosen later. Let $\psi \colon \R \to [0,1]$ be a smooth approximation to the indicator function $\1_{[0,1]}$ with the following properties:
\begin{enumerate}
\item $\int\psi = 1$;
\item $\psi$ is supported on $[-\eta, 1+\eta]$, and $\psi=1$ on $[\eta,1-\eta]$;
\item $\|\psi^{(A)}\|_1 \ll_A \eta^{-A+1}$ for all $A \geq 1$. 
\end{enumerate}
By partial summation it follows that the Fourier transform $\widehat{\psi}$ decays rapidly:
\[ |\widehat{\psi}(h)| \ll_A (1+|h|)^{-A} \eta^{-A+1} \]
for all $A \geq 1$. We may replace the sum over $m$ by the smoothed version
 \[ \Sigma(p,p') := \sum_{m \in \Z} \psi(m/M) F(pm) \overline{F(p'm)}, \]
where $M = \min(x/p, x/p')$, at the cost of an error
\[ \ll \eta M \|F\|_{\infty}^2 \ll \eta x^{o(1)} \cdot \frac{x}{P}. \]
Let $0<\sigma\leq \min\{ \delta/2,1/75\}$ and assume $\varepsilon=\sigma/3$.
If we choose $\eta = x^{-\sigma}$,  then the error is 
acceptable since $P \leq x^{\varepsilon}$.   By the definition of $F$, we have
\[ \Sigma(p,p') = \sum_{\substack{q,q'\sim Q \\ (a,q) = (a,q') = 1}} \xi_q \xi_{q'} \sum_{m \in \Z} \psi(m/M) K(p,p',q,q';m), \]
where $K(p,p',q,q';m)$ is the expression
\[\left(\1_{pm\equiv a\pmod{q}} - \frac{1}{q_r} \1_{pm\equiv a\pmod{q_s}}\right) \left(\1_{p'm\equiv a\pmod{q'}} - \frac{1}{q_r'} \1_{p'm\equiv a\pmod{q_s'}}\right).  \]
The inner sum over $m$ can be written as a sum of four terms, the first of which is
\[ \sum_{m \in \Z} \psi(m/M) \cdot \1_{pm\equiv a\pmod{q}} \cdot \1_{p'm\equiv a\pmod{q'}}. \]
The sum is empty unless
\[ p'a \equiv pa \pmod{(q,q')} \Leftrightarrow p' \equiv p\pmod{(q,q')}. \]
When this holds, the sum should have a main term of $M/[q,q']$ and an error term of
\[ g(q,q') = \sum_{m \in \Z} \psi(m/M) \cdot \1_{pm\equiv a\pmod{q}} \cdot \1_{p'm\equiv a\pmod{q'}} -
\frac{M}{[q,q']}. \]
After summing over $p,p'$, the contributions from the four main terms lead to exactly $\Sigma_1 + \Sigma_2$, which were treated in the proof of Lemma~\ref{BilinBound}:
\[ \Sigma_1 + \Sigma_2 \ll \frac{x}{P}\left(\frac{1}{w\log w} + \frac{P^{0.1}+\log x}{\pi(P)}\right). \]
It suffices to control the error terms by showing that
\[ E_1 := \sum_{\substack{q,q' \sim Q \\ (a,q) = (a,q') = 1}} \xi_q\xi_{q'}  g(q,q') \ll x/P^2,  \]
\[ E_2 := \sum_{\substack{q,q' \sim Q \\ (a,q) = (a,q') = 1}} \xi_q\xi_{q'} q_r^{-1} g(q_s,q') \ll x/P^2,  \]
\[ E_2' := \sum_{\substack{q,q' \sim Q \\ (a,q) = (a,q') = 1}} \xi_q\xi_{q'} q_r'^{-1}  g(q,q_s') \ll x/P^2,  \]
\[ E_3 := \sum_{\substack{q,q' \sim Q \\ (a,q) = (a,q') = 1}} \xi_q\xi_{q'} (q_rq_r')^{-1}  g(q_s,q_s') \ll x/P^2,  \]
for any fixed $p \neq p'$. By symmetry, it suffices to prove the bounds for $E_1, E_2, E_3$. We start by analyzing $g(q,q')$ for fixed $q,q'$.

%%%%%%%%%%%%%%%%%%%
\begin{lemma}\label{lem:g(q,q')} Suppose that $1\leq |a|\ll  Q\leq x^{\frac{20}{39}}$   and $P\leq x^\varepsilon$.
Fix $q,q' \leq 2Q$ and $p,p'\sim P$ with $d = (q,q')$ satisfying $p \equiv p'\pmod{d}$ and $(a,qq')=1$. Write $q=d\ell$ and $q'=d\ell'$ so that $(\ell,\ell')=1$. Then
\[ g(q,q') = \frac{M}{q'} \sum_{0 < |h| \leq H}  \e_{p'\ell}(kh \cdot \overline{p\ell'}) \int_{|u| \leq 2d/Q} \psi(\ell u) \ \e_{d\ell'}(-Mhu) \dd u + O_{\sigma}(x^{-1/10   }),  \]
where $H = x^{2\sigma}Q^2/(dM)$ and $k = (p'-p)a/d$, assuming $0<\sigma \leq 1/75$ and $\varepsilon = \sigma/3$.
\end{lemma}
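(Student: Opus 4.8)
The plan is to expand $g(q,q')$ via Poisson summation in the variable $m$. First I would parametrize the progression: the congruences $pm\equiv a\pmod q$ and $p'm\equiv a\pmod{q'}$ with $(q,q')=d$, $p\equiv p'\pmod d$ (so the system is solvable) pin down $m$ to a single residue class mod $[q,q']$. Writing $q=d\ell$, $q'=d\ell'$ with $(\ell,\ell')=1$, we have $[q,q']=d\ell\ell'=q\ell'=q'\ell$, and I would compute the solution $m\equiv m_0\pmod{[q,q']}$ explicitly using the Chinese Remainder Theorem, extracting the quantity $k=(p'-p)a/d$ that appears in the statement. Then
\[
\sum_{m\in\Z}\psi(m/M)\,\1_{m\equiv m_0\,(\mathrm{mod}\,[q,q'])}
= \frac{1}{[q,q']}\sum_{h\in\Z}\widehat{\psi}\!\left(\frac{Mh}{[q,q']}\right)\e_{[q,q']}(m_0h),
\]
by Poisson summation, where $\widehat\psi(t)=\int\psi(\xi)\e(-\xi t)\,d\xi$. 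The $h=0$ term is $M/[q,q']=M/(q\ell')$; but the claimed main term is $M/[q,q']$, so I would subtract it off and be left with $g(q,q')=\frac{1}{[q,q']}\sum_{h\neq 0}\widehat\psi(Mh/[q,q'])\,\e_{[q,q']}(m_0h)$.

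Next I would convert $\widehat\psi(Mh/[q,q'])$ back to an integral over a short interval. Since $[q,q']=q'\ell$ and we will want the phase split into a part mod $d\ell'$ and the oscillatory integral, I would write $\widehat\psi(Mh/[q,q']) = \int\psi(\xi)\,\e(-Mh\xi/(q'\ell))\,d\xi$ and substitute $\xi=\ell u$, so $d\xi=\ell\,du$ and the integrand becomes $\ell\,\psi(\ell u)\,\e(-Mh u/q')$. Because $\psi$ is supported on $[-\eta,1+\eta]$ with $\eta=x^{-\sigma}$ small, after the substitution $u$ ranges over $|u|\lesssim 1/\ell\le (1+\eta)/\ell$; since $\ell=q/d\asymp Q/d$ up to constants (and $d\le 2Q$), this is $|u|\le 2d/Q$ as in the statement, after absorbing the constant (here one uses $q\sim Q$, i.e. $Q<q\le 2Q$, so $\ell\ge q/(2d)\ge Q/(2d)$ — I would be slightly careful with the constant and note $|a|\ll Q$ is used to keep $k$ of controlled size). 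The factor $\frac{1}{[q,q']}\cdot\ell = \frac{\ell}{q'\ell}=\frac{1}{q'}$ gives the prefactor $M/q'$ once the $M$ from $\widehat\psi$'s argument is reinstated — actually the $M$ comes out as written: $g(q,q')=\frac{M}{q'}\sum_{h\neq0}(\cdots)\int(\cdots)du$ after also pulling the phase $\e_{[q,q']}(m_0h)$ into the form $\e_{p'\ell}(kh\overline{p\ell'})\cdot\e_{d\ell'}(-Mhu)$; the first factor is the arithmetic part of $m_0$ and the second is the archimedean oscillation carried by the integral. Splitting $m_0h/(q'\ell)$ coprimality-wise into a fraction with denominator $p'\ell$ (arithmetic) and the $u$-dependent piece with denominator $d\ell'$ is the bookkeeping step, and I would do the CRT inversions there to produce $\overline{p\ell'}$.

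Finally I would truncate the sum over $h$. The rapid decay $|\widehat\psi(t)|\ll_A(1+|t|)^{-A}\eta^{-A+1}$ means the tail $|h|>H$ with $Mh/[q,q']=Mh/(q'\ell)\gg x^{2\sigma}$, i.e. $H\asymp x^{2\sigma}[q,q']/M = x^{2\sigma}q'\ell/M$; since $q'\ell=[q,q']\le (2Q)^2/d \ll Q^2/d$, taking $H=x^{2\sigma}Q^2/(dM)$ (as stated, with $M=\min(x/p,x/p')\asymp x/P$) captures all but a negligible tail. For the tail one picks $A$ large: each term is $\ll \eta^{-A+1}H^{-A}\cdot(M/q')\cdot(d/Q)$ summed geometrically, and with $\eta=x^{-\sigma}$, $\varepsilon=\sigma/3$, $P\le x^\varepsilon$, $\sigma\le 1/75$ one checks the bound is $\ll x^{-9\sigma}$ with room to spare (the exponent $9$ is not tight; any fixed large power works and $9\sigma$ is a convenient safe choice). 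The main obstacle, such as it is, is the CRT bookkeeping in the second step: correctly identifying $m_0\bmod{[q,q']}$ and cleanly separating its contribution into the arithmetic phase $\e_{p'\ell}(kh\overline{p\ell'})$ and the integral phase $\e_{d\ell'}(-Mhu)$, keeping track of which inverse ($\overline{p\ell'}$) is meant and verifying the constant in $|u|\le 2d/Q$ against the normalization $q\sim Q$; the analytic estimates (Poisson, rapid decay, truncation) are routine once the algebra is set up.
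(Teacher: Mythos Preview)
Your overall architecture---Poisson summation on the progression $m\equiv m_0\pmod{[q,q']}$, rewriting $\widehat\psi$ as an integral via the substitution $\xi=\ell u$, then truncating at $|h|\le H$ using rapid decay---matches the paper's. But your account of the phase $\e_{[q,q']}(m_0h)$ is garbled and hides the one non-routine idea in the lemma.

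First, a confusion: the factor $\e_{d\ell'}(-Mhu)$ does not come from ``splitting $m_0h/[q,q']$''; it is simply the integrand of $\widehat\psi\bigl(Mh/(d\ell\ell')\bigr)$ after your change of variable. The arithmetic phase $\e_{[q,q']}(m_0h)$ has no $u$ in it and cannot produce a $u$-dependent piece. What actually has to be shown is
\[
\e_{[q,q']}(m_0h)=\e_{p'\ell}\bigl(kh\cdot\overline{p\ell'}\bigr)+O\!\left(\frac{|h|}{Q}\right),
\]
an \emph{approximation}, not an identity.

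Second, this approximation is not just CRT bookkeeping. Observe that $p'\ell$ is \emph{not} a divisor of $[q,q']=d\ell\ell'$, so no direct CRT decomposition of $m_0h/[q,q']$ can yield a fraction with denominator $p'\ell$. In the paper one writes $m_0=c+ds$ with $pc\equiv p'c\equiv a\pmod d$, discards the $c$-part at cost $O(|h|/Q)$, applies CRT to get $\e_{\ell\ell'}(sh)=\e_\ell(bh\,\overline{p\ell'})\,\e_{\ell'}(b'h\,\overline{p'\ell})$, and then invokes the \emph{reciprocity relation}
\[
\frac{\overline{v}\pmod u}{u}+\frac{\overline{u}\pmod v}{v}\equiv\frac{1}{uv}\pmod 1
\]
with $(u,v)=(p\ell',p'\ell)$ to flip $\e_{\ell'}(b'h\,\overline{p'\ell})$ into $\e_{p'\ell}(-b'h\,\overline{\ell'})$ times $\e_{p'\ell\ell'}(b'h)=1+O(|h|/Q)$. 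Only then do the two factors combine into $\e_{p'\ell}(kh\,\overline{p\ell'})$ with $k=(p'-p)a/d$. The accumulated phase error $O(|h|/Q)$, summed over $|h|\le H$ against the decaying $\widehat\psi$, gives a \emph{second} contribution to the $O(x^{-9\sigma})$ term; it is in bounding this contribution (not the tail truncation, which is comfortable) that the constraints $\sigma\le 1/75$, $\varepsilon=\sigma/3$, and $|a|\ll Q$ are actually used. Your proposal omits both the reciprocity step and this second error source.
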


Here, and in the sequel, the notation $\overline{p\ell'}$ denotes the multiplicative inverse of $p\ell'$ modulo $p'\ell$, when it appears inside $\e_{p'\ell}(\cdot)$. The definition of $g(q,q')$ involves $[q,q']=d\ell\ell'$. A key advantage of Lemma \ref{lem:g(q,q')} is that the two variables $\ell$ and $\ell'$ are separated apart from a term of the form $\e_\ell(*\overline{\ell'})$.

\begin{proof}
Let $r$ be the unique solution modulo $[q,q']$ to the simultaneous congruence conditions
\[ p r \equiv a\pmod{q}, \ \ p' r \equiv a\pmod{q'}. \]
Then
\[ g(q,q') = \sum_{m \in \Z} \psi\left(\frac{m}{M}\right) \1_{m \equiv r\pmod{[q,q']}} - \frac{M}{[q,q']}. \]
By the definition of the Fourier transform $\widehat{\psi}$  and a change of variables we have
\[ 
 \widehat{\psi}\left(\frac{2\pi Mh}{[q,q']}\right)  = \widehat{\psi}\left(\frac{2 \pi M h}{d\ell\ell'}\right) = \ell \int_{|u| \leq 2/\ell} \psi(\ell u) \e\left(-\frac{Mhu}{d\ell'}\right) \dd u,
\]
and by  Poisson summation, we obtain
\[ g(q,q') = \frac{M}{[q,q']} \sum_{h \in \Z\setminus\{0\}} \widehat{\psi}\left(\frac{2\pi Mh}{[q,q']}\right) \e\left(\frac{rh}{[q,q']}\right). \]
Using the rapid decay of $\widehat{\psi}$,  the contribution to $g(q,q')$ from those terms with $|h| \geq H$ is
\[
\ll_A 1\big/\left(\frac{ \eta MH}{[q,q']}\right)^{A-1} \ll_{\sigma} x^{-1}
\]
by the choices of $\eta = x^{-\sigma}$ and $H$ and letting $A=\sigma^{-1}+1$.  We will prove that  
\begin{equation}\label{AlmostId}
\e\left(\frac{rh}{[q,q']}\right) = \e_{d\ell\ell'}(rh) =  \e_{p'\ell}(kh \cdot \overline{p\ell'})  + O\left( \frac {|h|}{Q}\right).
\end{equation}
The total contribution of these error terms, over all $h$ with $1\leq |h|\leq H$, is therefore,  
\[
\ll    \frac {1}{Q} \bigg/\left(\frac{ \eta M}{[q,q']}\right)^{9} \ll
    \frac {1}{Q}   \left(\frac{Q^2P}{ \eta x}\right)^{9} \ll x^{-\frac{20}{39}}  (x^{\frac{1}{39}+\sigma+\varepsilon})^{9}\ll x^{-1/10 },
\]
as  $\sigma \leq 1/75$ and $\varepsilon = \sigma / 3$,
which implies the result when we insert the formulas and estimates above into the sum for $g(q,q')$.

To establish \eqref{AlmostId}, let $c\pmod{d}$ be the residue class with $pc \equiv p'c \equiv a\pmod{d}$, so that $r \equiv c\pmod{d}$. Make a change of variables $r = ds+c$, so that
\[ \e_{d\ell\ell'}(rh) = \e_{\ell\ell'}(sh) \e_{d\ell\ell'}(ch)= \e_{\ell\ell'}(sh) +O(ch/d\ell\ell'). \]
The value of $s$ is determined by the congruence conditions
\[ p(ds+c) \equiv a\pmod{q}, \ \ p'(ds+c) \equiv a\pmod{q'}, \]
which can be rewritten as
\[ ps \equiv b \pmod{\ell}, \ \ p's \equiv b' \pmod{\ell'}, \]
where $b = (a-pc)/d$ and $b' = (a-p'c)/d$.
Since $\ell$ and $\ell'$ are coprime, the Chinese remainder theorem leads to
\[ s \equiv b((p\ell')^{-1}\operatorname{mod }\ell) \cdot \ell' + b'((p'\ell)^{-1}\operatorname{mod }\ell')\cdot \ell \pmod{\ell\ell'}.  \]
Hence
\[ \e_{\ell\ell'}(sh) = \e_\ell(bh \cdot \overline{p\ell'}) \e_{\ell'}(b'h \cdot \overline{p'\ell}). \]
Now apply the reciprocity relation
\[ \frac{v^{-1}\pmod{u}}{u} + \frac{u^{-1}\pmod{v}}{v} \equiv \frac{1}{uv}\pmod{1} \]
with $u=p\ell'$ and $v = p'\ell$ to obtain
\[ \e_{p\ell'}(w\overline{p'\ell}) \e_{p'\ell}(w\overline{p\ell'}) = \e_{pp'\ell\ell'}(w) \]
for any $w$. This implies that
\[ 
\e_{\ell'}(b'h \cdot \overline{p'\ell}) = \e_{p\ell'}(pb'h \cdot  \overline{p'\ell}) 
= \e_{p'\ell}(-b'h \cdot \overline{\ell'}) \e_{p'\ell\ell'}(b'h)
= \e_{p'\ell}(-b'h \cdot \overline{\ell'}) +O(b'h/p'\ell\ell') .
\]
The main term is therefore
\[
   \e_\ell(bh \cdot \overline{p\ell'})  \e_{p'\ell}(-b'h \cdot \overline{\ell'})=  \e_{p'\ell}(kh \cdot \overline{p\ell'}) ,
\]
as claimed.  For the error terms we note that $|c|\leq d$ and $|b'|\leq |a/d|+p'$; and therefore the error is
\[
\ll \left(1+ \frac{|a|}{dP}\right)  \frac {|h|}{\ell\ell'} \ll  \left(P^2+ |a|\right)  \frac {|h|}{Q^2} \ll \frac {|h|}{Q}
\]
as $|a|, x^{2\sigma}\leq Q$ and $|d|\leq P\leq x^\sigma$,
as claimed.
This completes the proof.
\end{proof}

Now we perform the summation over $q,q'$:

%%%%%%%%%%%%%%%%%%%%%%%
\begin{lemma}\label{lem:sumqq'} Suppose that $1\leq |a|\ll  Q\leq x^{\frac{20}{39}-\delta}$   and $P\leq x^{\varepsilon}$.
For any sequences  $\{\gamma(q)\}, \{\gamma'(q)\}$  with $|\gamma(q)| , |\gamma'(q)| \leq 1$  we have
\[ \sum_{\substack{q,q' \sim Q \\ (q,q') \mid p-p'}} \gamma(q) \gamma'(q') g(q,q') \ll  x^{2\sigma+\varepsilon}   Q^{2-\frac {1}{20}} +Q^2 x^{-1/10}
\ll \frac x{P^2}, \] 
assuming $\sigma =  \min\{ \delta/2,1/75\}$ and $\varepsilon=\sigma/3$.
\end{lemma}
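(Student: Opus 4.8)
The plan is to insert the formula from Lemma~\ref{lem:g(q,q')} into the sum $\sum_{q,q'} \gamma(q)\gamma'(q') g(q,q')$ and estimate the resulting exponential sum. First I would split $g(q,q')$ according to that lemma: the $O(x^{-9\sigma})$ error term contributes at most $Q^2 x^{-9\sigma}$ after trivially summing over the $\ll Q^2$ pairs $(q,q')$, which is the second term in the claimed bound; and since $Q \leq x^{20/39-\delta}$ and $P \leq x^{\varepsilon}$ with $\sigma = \min\{\delta/2, 1/75\}$, both $Q^2 x^{-9\sigma}$ and $x^{2\sigma+\varepsilon} Q^{2-1/20}$ are $\ll x/P^2 = x^{1-2\varepsilon}$ (one checks $2 - 1/20$ times $20/39$ is safely below $1$). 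So the real content is bounding the main sum
\[ \frac{M}{\,} \sum_{d} \frac{1}{d} \sum_{\substack{\ell,\ell' \sim Q/d \\ (\ell,\ell')=1}} \gamma(d\ell)\gamma'(d\ell') \sum_{0<|h|\leq H} \e_{p'\ell}(kh\overline{p\ell'}) \int_{|u|\leq 2d/Q} \psi(\ell u)\, \e_{d\ell'}(-Mhu)\, \dd u, \]
where the coprimality and the separation of $\ell,\ell'$ apart from the factor $\e_{p'\ell}(\ast\overline{\ell'})$ is exactly what makes this amenable to Kloosterman-sum technology.

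The key step is to recognise the inner triple sum over $\ell, \ell', h$ as (after harmless manipulations: opening the $u$-integral by Fourier expanding $\psi(\ell u)$, and handling the $p, p'$ factors which are fixed and of size $\leq x^\varepsilon$) a \emph{bilinear form in Kloosterman fractions} of precisely the shape treated by Bettin--Chandee~\cite{BC}. That is, one arranges the sum into the form $\sum_{\ell}\sum_{\ell'} \alpha_\ell \beta_{\ell'} \e_{\ell}(n\overline{\ell'})$ (with a smooth archimedean weight coming from $\psi$ and the integral), with $\ell,\ell' \asymp Q/d$, the ``frequency'' $n$ of controlled size, and $1$-bounded coefficients $\alpha_\ell,\beta_{\ell'}$ absorbing $\gamma, \gamma'$, the congruence $p\equiv p'\pmod d$, and the $h$-dependence. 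The Bettin--Chandee bound then saves a power of the modulus over the trivial estimate, and it is the exponent in their theorem that produces the specific saving $Q^{2-1/20}$ (equivalently the $20/39$ threshold: one needs the total saving to beat the $H \asymp x^{2\sigma}Q^2/(dM)$ and $M \asymp x/P$ factors, and $20/39$ is where the arithmetic balances). I would sum the resulting bound over $d \leq 2Q$ and over $0 < |h| \leq H$, picking up a $\log x \cdot H$ factor, and over the $O(1)$ choices of sign pattern, and check that everything consolidates into $x^{2\sigma+\varepsilon} Q^{2-1/20}$ up to the $x^{o(1)}$ losses hidden in $\ll$.

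The main obstacle I expect is the bookkeeping needed to cast the sum in the exact hypotheses of the Bettin--Chandee estimate: one must verify the ranges of $\ell, \ell'$ relative to the modulus and the frequency, confirm that the archimedean weight (coming from $\psi(\ell u)$ integrated against $\e_{d\ell'}(-Mhu)$) has the required smoothness and support properties uniformly in $d$ and $h$, and ensure the coefficient sequences really are $1$-bounded after the congruence condition $(q,q') \mid p-p'$ is encoded — the factor $d = (q,q')$ moving through all divisors up to $2Q$ is the delicate point, since for $d$ close to $Q$ the variables $\ell, \ell'$ become short and the Kloosterman bound degrades. I would handle large $d$ separately by a trivial (or Weil-bound) estimate, exploiting that $\sum_{d}1/d$ converges and that for $d$ large there are few pairs. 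A secondary technical point is keeping the dependence on $p, p'$ (which are fixed but of size $\leq x^\varepsilon$ and appear inside the modulus $p'\ell$) under control; since $\varepsilon = \sigma/3$ is tiny, these contribute only $x^{O(\varepsilon)}$ and are swept into the $x^{2\sigma+\varepsilon}$ factor. Once the Kloosterman input is applied, the reduction to $\ll x/P^2$ is the elementary comparison of exponents already indicated.
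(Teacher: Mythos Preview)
Your approach is the paper's: insert Lemma~\ref{lem:g(q,q')}, cast the $\ell,\ell'$-sum as a bilinear form with Kloosterman fractions, and apply Bettin--Chandee. Two simplifications you missed. First, the constraint $(q,q')\mid p-p'$ forces $d=(q,q')\le |p-p'|<2P\le 2x^{\varepsilon}$, so $d$ is \emph{always tiny}; the paper simply sums over the $\tau(|p-p'|)=x^{o(1)}$ divisors $d\mid p-p'$, and your worry about $d$ near $Q$ never arises. Second, there is no need to Fourier-expand $\psi(\ell u)$: for each fixed $d,h,u$ the paper absorbs $\gamma(d\ell)\psi(\ell u)$ into a bounded coefficient $\alpha(p'\ell)$ and $\gamma'(d\ell')\e_{d\ell'}(-Mhu)\cdot(Q/q')$ into $\beta(p\ell')$, applies Bettin--Chandee directly to $\sum_{n,n'}\alpha(n)\beta(n')\e_n(kh\,\overline{n'})$ with $n=p'\ell$, $n'=p\ell'$ (so the moduli carry the $p,p'$ factors), and only then integrates trivially over $|u|\le 2d/Q$ and sums over $|h|\le H$.
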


\begin{proof}
The total contribution of the  error term  from Lemma~\ref{lem:g(q,q')} is   $O(Q^2 x^{-1/10})$.
For each integer $h,\ 1 \leq |h| \leq H:=x^{2\sigma}Q^2/dM$, each divisor $d$ of $p'-p$, and each fixed $|u|\leq  2d/Q$, we restrict our attention to 
those pairs with $(q,q') = d$. Write $q=d\ell$ and $q'=d\ell'$, and then define
\[ \alpha(n) = \begin{cases} \gamma(\ell d)\psi(\ell u) & \text{if }n = p'\ell  \text{ for some } \ell \sim Q/d\\ 0 & \text{otherwise},\end{cases} \]
so that   $|\alpha(n)|\leq |\psi(\ell u) |\ll 1$, and 
\[
\beta(n') = \begin{cases} (Q/\ell'd) \gamma'(\ell'd)\e_{\ell'd}(-Mhu) & 
 \text{if }n' = p\ell'  \text{ for some } \ell' \sim Q/d\\ 0 & \text{otherwise}.
\end{cases} 
\]

Theorem 1 of \cite{BC} (with $A=1,\ \theta=kh$ and the roles of $\alpha$ and $\beta$ swapped)  implies that 
\[ 
\sum_{\substack{n\sim N,\ n'\sim N' \\ (n,n')=1}} \alpha(n) \beta(n') \e_n(kh \cdot \overline{n'}) \ll 
\frac Qd \left( \frac{PQ}d \right)^{\frac {19}{20}+\frac{\varepsilon}{20}}  
\]
where $N=p'Q/d$ and $N'=pQ/d$, and  $k = (p'-p)a/d$, as
$|kh|/ (PQ/d)^2 \ll   |a|/  x^{1-2\sigma}\ll 1$.
 Summing over $u$ and $h$,  our sum is therefore
\[
\ll   \frac{M}{Q}  \sum_{d|p-p'}  \frac{2d}{Q} \cdot \frac{x^{2\sigma}Q^2}{dM} \cdot \frac Qd \left( \frac{PQ}d \right)^{\frac {19}{20}+\frac{\varepsilon}{20}}
\ll     Q x^{2\sigma}   \left( PQ  \right)^{\frac {19}{20}+\frac{\varepsilon}{20}}\ll x^{2\sigma+\varepsilon}   Q^{2-\frac {1}{20}}.
\]
\end{proof}

\begin{proof} [Completion of the  proof of Proposition~\ref{prop:bilinear-beyond1/2}] Each of $E_1,E_2,E_3$ can be effectively bounded using Lemma~\ref{lem:sumqq'}. Indeed, for $E_1$ we apply Lemma~\ref{lem:sumqq'} with 
\[ \gamma(q) = \begin{cases} \xi_q & \text{if }q \sim Q \\ 0 & \text{otherwise,} \end{cases} \ \  \gamma'(q') = \begin{cases} \xi_{q'} & \text{if }q' \sim Q \\ 0 & \text{otherwise.} \end{cases} \]
To bound $E_2$ we apply Lemma~\ref{lem:sumqq'} with
\[ \gamma(q_s) = \sum_{\substack{q_r \sim Q/q_s  }} \xi_{q_rq_s} q_r^{-1}, \ \  \gamma'(q') = \begin{cases} \xi_{q'} & \text{if }q' \sim Q \\ 0 & \text{otherwise.} \end{cases} \]
Similarly for $E_3$.
\end{proof}

\subsection{Proof of Theorems~\ref{Cor:Result2+} and~\ref{MathResult3+}} 
%For $a$ fixed with
%$1\leq |a|\ll  Q\leq x^{\frac{20}{39}-\delta}$ , we replace Lemma \ref{BilinBound} by Proposition~\ref{prop:bilinear-beyond1/2}  to obtain  $ E_{\text{bilinear}} \ll x/(\log x)^A$ in Proposition \ref{prop:ramare} where $w=(\log x)^A$ and $Y\geq (\log x)^{2A+4}$ for some fixed $A\geq 1$.  Using this estimate to modify the proofs of Corollary 
%\ref{Thm: SW}, and Theorems \ref{Cor:Result2} and \ref{MathResult3}   immediately gives Theorems
%\ref{Thm: SW+},  \ref{Cor:Result2+} and \ref{MathResult3+}, respectively (though notice that one no longer gets the additional restriction on $Q$ which depends on $y$).
 
For $a$ fixed with
$1\leq |a|\ll  Q\leq x^{\frac{20}{39}-\delta}$ , we replace Lemma \ref{BilinBound} by Proposition~\ref{prop:bilinear-beyond1/2}  to obtain analogues of Theorem~\ref{MathResult2} and Corollary~\ref{MathResult2Cor}. The proof of Theorems~\ref{Cor:Result2+} and~\ref{MathResult3+} then follows in exactly the same way as the arguments in section~\ref{sec:deduce-many-thms}.

%%%%%%%%%%%%%%% 
\section{Various examples}
%%%%%%%%%%%%%%%

\subsection{The large prime obstruction; the proof of Proposition \ref{LargePrimes2}}  
In this section we will construct multiplicative functions with certain properties.  We will only define these multiplicative functions on $[1,x]$ since the integers $n>x$ do not contribute. Let $\mathcal P$ be the set of primes $p \in (x/2,x]$ for which there does not exist an integer $q\sim Q$ with $q|p-1$.
Let $f_+,f_-$ be the multiplicative functions for which $f_+(n)=f_-(n)=g(n)$ for all $n\leq x$ with $n\not\in \mathcal P$, and $f_+(p)=1$ and $f_-(p)=-1$ for all $p\in \mathcal P$.
If $f \in \{f_+,f_-\}$ then
$$
\sum_{\substack{ n\leq x \\ n\equiv a \pmod q}} f(n) = \sum_{\substack{ n\leq x \\ n\equiv a \pmod q}} g(n) + \sum_{\substack{ x/2<p\leq x \\ p\equiv a \pmod q}} ( f(p)-g(p))  ,
$$
and so
$$
\Delta(f,x;q,1)-\Delta(g,x;q,1) =  \sum_{\substack{ x/2<p\leq x \\ p\equiv 1 \pmod q}}  ( f(p)-g(p))   - 
 \frac 1{\varphi(q)} \sum_{\substack{ x/2<p\leq x }}  ( f(p)-g(p)).
$$
Since   $f(p)=g(p)$ if $p\not \in \mathcal P$,  the first term on the right-hand side of the last displayed equation vanishes, by the definition of $\mathcal P$. Therefore
$$
\Delta(f_+,x;q,1)-\Delta(f_-,x;q,1) =  \frac {2\# \mathcal P} {\varphi(q)} \sim \frac {1} {\varphi(q)} \frac x{\log x}.
$$
The asymptotic in the last step follows from Theorem 6 and Corollary 2 of Ford's masterpiece  \cite{For} 
which imply that almost all primes $p$ in the interval $x/2<p\leq x$ belong to $\mathcal P$. Thus for each $q$ we have either $|\Delta(f_+,x;q,1)|$ or $|\Delta(f_-,x;q,1)|$ is $\gg \pi(x)/\varphi(q)$. The conclusion follows immediately.

%%%%%%%%
\subsection{$f$ satisfying the Siegel-Walfisz criterion but not Bombieri-Vinogradov with prime moduli} \label{NoBV}   We will assume \eqref{ExtendPNT} in what follows. Let $x^{1/3}<Q<x^{1/2-\varepsilon}$, and let $\mathcal P$ now be the set of primes $p \in (x/2, x]$ for which there   exists a prime $q\sim Q$ for which $q|p-1$.
 We now define the multiplicative function $f$ so that $f(p)=-1$ if $p\in \mathcal P$, and $f(n)=1$ if $n \leq x$ and $n\notin\mathcal P$. 
 Therefore if $m<x/2$ and $(a,m)=1$ then
 \[
 \Delta(f,x;m,a) = \Delta(1,x;m,a) - 2 \Delta(1_{\mathcal P},x;m,a) = - 2 \Delta(1_{\mathcal P},x;m,a) +O(1).
\]
Note that if $p\in \mathcal P$ then $p-1$ can have at most two prime factors from the set of primes $q\sim Q$. Therefore, by inclusion-exclusion we have
 \[
 \# \mathcal P =  \sum_{\substack{q\sim Q \\ q \ \text{prime}}} \pi^*(x;q,1) - 
  \sum_{\substack{q_1<q_2\sim Q \\ q_1,q_2 \ \text{prime}}}   \pi^*(x;q_1q_2,1)
 \]
 where $\pi^*(x;q,a):=\pi(x;q,a)-\pi(x/2;q,a)$. Similarly if $m<Q$ (so that $(m,q)=1$, and therefore $\varphi(mq)=\varphi(m)\varphi(q)$),  then
 \[
 \#\{ p\in \mathcal P: \ p\equiv a \pmod m\} = \sum_{\substack{q\sim Q \\ q \ \text{prime}}}  \pi^*(x;qm,a_{qm}) -  \sum_{\substack{q_1<q_2\sim Q \\ q_1,q_2 \ \text{prime}}}   \pi^*(x;q_1q_2m,a_{q_1q_2m}) ,
 \]
 where $a_{rm}$ is that residue class mod $rm$ which is $\equiv 1 \pmod r$ and $\equiv a \pmod m$ (for $r=q$ or $q_1q_2$).
 Comparing these two equations,  we have
 \[
 \begin{split}
\Delta(1_{\mathcal P},x;m,a)  &= \sum_{\substack{q\sim Q \\ q \ \text{prime}}}  \left( \pi^*(x;qm,a_{qm})-\frac{\pi^*(x;q,1)}{\varphi(m)}  \right) \\
& \hskip1in  -  \sum_{\substack{q_1<q_2\sim Q \\ q_1,q_2 \ \text{prime}}}   \left( \pi^*(x;q_1q_2m,a_{q_1q_2m}) -\frac{\pi^*(x;q_1q_2,1)}{\varphi(m)} \right)  \\
& \ll  \frac{\pi(x)}{(\log x)^A} \sum_{\substack{q\sim Q \\ q \ \text{prime}}}  \frac{1}{\varphi(mq)}  + \frac{\pi(x)}{(\log x)^A}   \sum_{\substack{q_1<q_2\sim Q \\ q_1,q_2 \ \text{prime}}}   \frac{1}{\varphi(mq_1q_2)} \\ & \ll \frac 1{\varphi(m)} \frac{x}{(\log x)^{A+2}},
\end{split}
 \]
 using \eqref{ExtendPNT} under the assumption  $4mQ^2\leq x^{1 -\varepsilon}$. This shows that the $A$-Siegel-Walfisz criterion for $f$ holds for every $A>0$,  for the sums over $n\leq x$. The same argument works for sums over $n\leq X$, for any $X\leq x$, changing the definition of $\pi^*(X;q,a)$ to be $0$ if $X\leq x/2$, and 
 $\pi^*(X;q,a):=\pi(X;q,a)-\pi(x/2;q,a)$ if $x/2<X\leq x$.

 On the other hand, \eqref{ExtendPNT} (or the unconditional Brun-Titchmarsh Theorem), implies  that 
\[
 \# \mathcal P \ll \sum_{\substack{q\sim Q \\ q \ \text{prime}}} \frac{\pi^*(x)}{\varphi(q)} \ll \frac x{(\log x)^2} .
 \]
%  \begin{split}  \# \mathcal P &= \sum_{\substack{q\sim Q \\ q \ \text{prime}}} \frac{\pi^*(x)}{\varphi(q)} -   \sum_{\substack{q_1<q_2\sim Q \\ q_1,q_2 \ \text{prime}}}  \frac{\pi^*(x)}{\varphi(q_1q_2)} + O\left( \ \frac{x}{(\log x)^{A+2}} \right) \\  & = \pi^*(x) \left( \lambda - \frac 12 \lambda^2 + O\left( \ \frac{1}{(\log x)^{A+1}} \right) \right)  \ll \frac x{(\log x)^2}, \end{split}  \]
% where $\lambda= \log( \frac{\log 2Q}{\log Q})$ as 
% \[ \sum_{\substack{q\sim Q \\ q \ \text{prime}}} \frac{1}{\varphi(q)} = \lambda +   O\left( \ \frac{1}{(\log Q)^{A+1}} \right) . \]
Therefore if $q\sim Q$ and $q$ is prime then  
\[
\Delta(1_{\mathcal P},x;q,1) = \pi^*(x;q,1)- \frac{\# \mathcal P} {\varphi(q)} \sim \frac{\pi^*(x)}{\varphi(q)}.
\]
Therefore if $q\sim Q$ and $q$ is prime then
\[
 \Delta(f,x;q,1) \sim - \frac{1}{q} \cdot \frac x{\log x},\]
and so
\[
\sum_{q\sim Q}  |\Delta(f,x;q,1)| \geq  \sum_{\substack{q\sim Q \\ q \ \text{prime}}}  |\Delta(f,x;q,1)|  \gg \frac x{(\log x)^2} .
\]
This doesn't quite prove Corollary~\ref{LargePrimes2b}, but it will be fixed in the next subsection.

%%%%%%%%%%%%%%%%%%
\subsection{$f$ satisfying the Siegel-Walfisz criterion but not Bombieri-Vinogradov; the proof of Corollary  \ref{LargePrimes2b} } \label{NoBV2}
%%%%%%%%%%%%%%%%%%
Now we use a minor modification of the argument in the previous subsection to extend the estimate to all moduli.  Let
$x^{2/5}<Q<x/2$, and $I=(x^{1/3},x^{2/5}]$. Let $\mathcal P$ now be the set of primes $p \in (x/2, x]$ for which there   exists a prime $\ell \in I$ with $\ell |p-1$. Define the multiplicative function $f$ so that $f(p)=-1$ if $p\in \mathcal P$, and $f(n) = 1$ if $n \leq x$ and $n\notin\mathcal P$. 
So we again have that if $m<x/2$ and $(a,m)=1$ then
 \[ \Delta(f,x;m,a) = - 2 \Delta(1_{\mathcal P},x;m,a) +O(1). \]
 The  argument of Section~\ref{NoBV} yields  the Siegel-Walfisz criterion for $f$, assuming \eqref{ExtendPNT}.
 We also have, assuming \eqref{ExtendPNT}, that 
 \[
  \# \mathcal P \leq \sum_{\substack{ \ell\in I \\ \ell \ \text{prime}}} \pi^*(x;\ell,1) = \{ 1+o(1)\}\sum_{\substack{ \ell\in I \\ \ell \ \text{prime}}}  \frac{\pi^*(x)}{ \ell-1}   = \{ c+o(1)\} \pi^*(x) ,
 \]
 where $c=\log(6/5)<1/5$.

Now suppose that $q\sim Q$ (not necessarily prime) and that there exists a prime $\ell$ in the interval $I$ which divides $q$. If $x/2<p\leq x$ and $p\equiv 1 \pmod q$ then $\ell|q|p-1$ and so $p\in \mathcal P$. So we have
\[
\Delta(1_{\mathcal P},x;q,1) = \pi^*(x;q,1)- \frac{\# \mathcal P} {\varphi(q)}  \geq (1-c+o(1)) \frac{\pi^*(x)}{\varphi(q)} \geq \frac 45 \cdot \frac{\pi^*(x)}{\varphi(q)},
\]
and therefore 
\[
- \Delta(f,x;q,1) \gg   \frac{1}{\varphi(q)} \cdot \frac x{\log x}.
\]
Now each such $q$ can have at most one such prime divisor $\ell$. Therefore
\[
\sum_{q\in Q}  |\Delta(f,x;q,1)| 
\gg  \frac{x}{\log x} \sum_{\substack{\ell \in I \\ \ell\  \text{prime}}} \sum_{\substack{q \sim Q \\ \ell | q}} \frac{1}{\varphi(q)} \gg
\frac x{\log x} \sum_{\substack{ \ell\in I \\ \ell \ \text{prime}}} \frac 1 \ell \gg  \frac x{\log x}.
\]
This completes the proof of Corollary~\ref{LargePrimes2b}.

%%%%%%%%%%%%%%%%%%
\subsection{$f$ that correlate to many characters} \label{ManyChar}
%%%%%%%%%%%%%%%%%%

For a given integer $q$ suppose we are given constants $g(\chi)$ for each character $\chi \pmod q$, and define the multiplicative function $f$ by
\[
\Lambda_f(n) = c_q(n) \Lambda(n) , \ \text{where} \ c_q(n):= \sum_{\chi \pmod q} g(\chi) \chi(n)
\]
depends only on $n\pmod q$. This 
  implies that the Dirichlet series $F$ associated to $f$ is
\[
F(s) = \prod_{\chi \pmod q}  L(s,\chi)^{g(\chi)}.
\]
Moreover if we twist $f$ by a character $\psi\pmod{q} $, then
\[
F_\psi(s):= \sum_{n\geq 1} \frac{(f\overline\psi)(n) }{n^s}  = \prod_{\chi \pmod q}  L(s,\chi)^{g(\chi\psi)}.
\]
By the Selberg-Delange Theorem (see, e.g., Theorem 5 of Section II.5.5 in~\cite{Ten}) one has that 
if $g(\psi)$ is not an integer then
\[
|S_f(x,\psi)| \gg  \frac x{ (\log x)^{1-\text{Re}(g(\psi))}} \ .
 \]
 
 Now $f\in \mathcal C$ if and only if $|c_q(n)| \leq 1$ for all $n$. We will make the particular choice that 
 \[
 g(\chi) = \frac 1{\varphi(q)} \sum_{a \pmod q} \overline{\chi}(a) e(a/q)
 \]
 are Gauss sums (where $e(t):=e^{2i\pi t}$), so that
 \[
 \begin{split}
 c_q(n)&= \frac 1{\varphi(q)}\sum_{\chi \pmod q} \chi(n) \sum_{a \pmod q} \overline{\chi}(a) e(a/q) \\
 & = \sum_{a \pmod q}  e(a/q) \frac 1{\varphi(q)} \sum_{\chi \pmod q} \chi(n)\overline{\chi}(a) = e(n/q) .
 \end{split}
  \]
 Therefore $f\in \mathcal C$ and $| g(\chi) |=\sqrt{q}/(q-1)$ whenever $\chi$ is non-principal and $q$ is prime.
 Moreover Katz showed (though see \cite{KZ} for a much easier proof) that the arguments of the $g(\chi)$ are equi-distributed around the unit circle.
 In particular the number of  non-principal $\psi \pmod q$ for which $1-\text{Re}(g(\psi))>1-\frac 1{2\sqrt{q}}$ is $\sim q/3$. We deduce that if $q$ is a prime for which $3k\lesssim q<4/\epsilon^2$ then there are $>k$ primitive characters $\psi \pmod q$ for which 
 \[
|S_f(x,\psi)| \gg  \frac x{ (\log x)^{1-\epsilon}} \ .
 \]
 Therefore, by \eqref{eq: 3.lbs} and the prime number theorem, we deduce that 
for  any given integer $k$ there is an  
$ \varepsilon' \sim \frac 2{\sqrt{3k}}$, for which there exists $f\in {\mathcal C} $ such that 
\[ 
\sum_{q\sim Q} \max_{a:\ (a,q)=1} | \Delta_{k}(f,x;q,a) | \gg   \frac{x} {(\log x)^{1-\varepsilon'}} .
\]
This shows that  Theorem \ref{Cor:Result2} is close to ``best possible".

%%%%%%%%%%%%%%%%%%
\section{Average of higher $U^k$-norms}\label{sec:higher-uk}
%%%%%%%%%%%%%%%%%%

In this section, we investigate a higher order generalization of the Bombieri-Vinogradov inequality, which measures more refined distributional properties. This higher order version involves Gowers norms, a central tool in additive combinatorics. We refer the readers to~\cite[Chapter 11]{TV06} for the basic definitions and applications. In particular, $\|f\|_{U^k(Y)}$ stands for the $U^k$-norm of the function $f$ on the discrete interval $[0,Y]\cap \Z$.

For any arithmetic function $f: \Z \to \C$ and any residue class $a \pmod q$, denote by $f(q\cdot + a)$ the function $m \mapsto f(qm+a)$. 

\begin{corollary}\label{cor:uk-average}
Fix a positive integer $k$. Fix $B \geq 2$ and $\ee > 0$.
Let $2 \leq y \leq x^{1/10}$ be large. Let $f$ be a completely multiplicative function with each $|f(n)| \leq 1$, which is only supported on $y$-smooth integers. Assume that
\begin{equation}\label{eq:savelogpower} 
\sum_{\substack{n \leq x \\ n \equiv a\pmod{q}}} f(n) \ll_C \frac{x}{(\log x)^C}
\end{equation}
for any $(a,q)=1$ and any $C \geq 1$. Let $Q = x^{1/2}/(y^{1/2}(\log x)^A)$, where $A$ is sufficiently large in terms of $k,B,\ee$. Then for all but at most $Q(\log x)^{-B}$ moduli $q \leq Q$, we have
\[ \max_{\substack{0 \leq a < q \\ (a,q)=1}} \|f(q \cdot +a)\|_{U^k(x/q)} \leq \ee. \] 
\end{corollary}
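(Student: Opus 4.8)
The plan is to derive this from the Bombieri--Vinogradov estimate of Corollary~\ref{MathResult3}, applied not to $f$ itself but to its ``pretentious twists'' $g = f\cdot\overline\chi\cdot n^{-it}$, the bridge being an inverse theorem for Gowers norms of multiplicative functions restricted to progressions. The substantive input I would take from the machinery of~\cite{Sha} is a statement of the shape: there are $r_0=r_0(k,\ee)$ and $t_0=t_0(k,\ee)$ such that whenever $q\leq Q$, $(a,q)=1$ and $\|f(q\cdot+a)\|_{U^k(x/q)}>\ee$, there exist a Dirichlet character $\chi$ of conductor $r\leq r_0$ and a real $t$ with $|t|\leq t_0$ for which
\[ \Bigl|\sum_{\substack{n\leq x\\ n\equiv a\,(q)}} f(n)\,\overline\chi(n)\,n^{-it}\Bigr| \;\geq\; c_0(k,\ee)\,\frac{x}{q} \]
(one could afford the weaker ranges $r,|t|\leq(\log x)^{O_{k,\ee}(1)}$). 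This encodes the fact that a multiplicative function carries no genuine higher-step (nilpotent) structure, only ``degree-one'' structure coming from archimedean and Dirichlet characters; establishing it for general $k$ is the crux, and I expect it to be the main obstacle --- everything else is bookkeeping around it.

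Granting this, I would argue by contradiction. If more than $Q(\log x)^{-B}$ moduli $q\leq Q$ were \emph{bad} --- carrying some $a_q$ with $(a_q,q)=1$ and $\|f(q\cdot+a_q)\|_{U^k(x/q)}>\ee$ --- then, after replacing $t$ in the statement above by the nearest point of a grid of spacing $\asymp\ee/\log x$ inside $[-t_0,t_0]$ (which costs $O(\ee\,x/q)$ since $|n^{-it}-n^{-it'}|\leq|t-t'|\log x$ for $1\leq n\leq x$, hence may be absorbed into $c_0$), there remain only $O_{k,\ee}(\log x)$ admissible pairs $(\chi,t)$. By pigeonhole one such pair is attached to a set $\mathcal B$ of bad moduli with $\#\mathcal B\gg_{k,\ee}Q(\log x)^{-B-1}$; fix it and put $g(n):=f(n)\overline\chi(n)n^{-it}$, so that $\bigl|\sum_{n\leq x,\ n\equiv a_q(q)}g(n)\bigr|\geq\tfrac12 c_0\,x/q$ for every $q\in\mathcal B$. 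This $g$ is completely multiplicative, $1$-bounded, and still supported only on the $y$-smooth integers; and it inherits the Siegel--Walfisz property from $f$: splitting any sum $\sum_{n\leq x,\ n\equiv b(q')}g(n)$ over residues of $n$ modulo $r$ by the Chinese Remainder Theorem to clear $\overline\chi$, pulling out $g$ of the relevant common divisor to reduce to a coprime class, and applying partial summation to~\eqref{eq:savelogpower} yields $\ll_C(1+|t|)x/(\log x)^C\ll_{C,k,\ee}x/(\log x)^C$ for all $C$ --- it is here that bounded, or at worst poly-logarithmic, $|t|$ is essential.

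It follows that the main term $\tfrac1{\varphi(q)}\sum_{n\leq x,(n,q)=1}g(n)$ of $\Delta(g,x;q,a_q)$ is $\ll_C x/(\log x)^C$, so for $q\in\mathcal B$ we get $|\Delta(g,x;q,a_q)|\gg_{k,\ee}x/Q$, whence $\sum_{q\leq Q}\max_{(a,q)=1}|\Delta(g,x;q,a)|\gg_{k,\ee}x(\log x)^{-B-1}$. On the other hand $g$ satisfies the hypotheses of Corollary~\ref{MathResult3} (when $y=x^{\ee_0}$ for a fixed $\ee_0\leq\tfrac1{10}$; for smaller $y$ one applies instead the analogue valid for that smoothness parameter, cf.\ Proposition~\ref{MainCor}): taking $B':=B+3$ fixes the requisite $A'=A'(k,B,\ee)$, and then $\sum_{q\sim Q''}\max_{(a,q)=1}|\Delta(g,x;q,a)|\ll x/(\log x)^{B+3}$ for every $Q''\leq x^{1/2}/y^{1/2}(\log x)^{A'}$. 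Choosing $A\geq A'$ and summing over the $O(\log x)$ dyadic ranges inside $q\leq Q=x^{1/2}/y^{1/2}(\log x)^A$ --- with the finitely many very small $q$ dispatched directly by the Siegel--Walfisz bound for $g$ --- gives $\sum_{q\leq Q}\max_{(a,q)=1}|\Delta(g,x;q,a)|\ll x/(\log x)^{B+2}$, contradicting the lower bound above once $x$ is large. This establishes the corollary, with $A$ taken large enough in terms of $k,B,\ee$ to absorb $A'$ and the discretization.
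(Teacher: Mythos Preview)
Your argument is correct, but it is organized differently from the paper's. The paper first derives the Bombieri--Vinogradov estimate~\eqref{eq:BV-f} for $f$ itself via Corollary~\ref{MathResult3}, and then feeds this directly into the machinery of~\cite{Sha}: for each exceptional $q$ the GTZ inverse theorem produces a bounded-complexity nilsequence correlating with $f$ on the progression, and~\cite[Lemma~2.4]{Sha} together with the deduction of~\cite[Theorem~1.2]{Sha} bounds the number of such $q$, using~\eqref{eq:BV-f} only at the step corresponding to~\cite[Equation~(2.3)]{Sha}. You instead front-load the structural input, positing as a black box the statement that a large $U^k$-norm on a progression forces correlation with some $\chi(n)n^{it}$ of bounded (or polylogarithmic) parameters on that same progression; you then pigeonhole over $(\chi,t)$ and apply Corollary~\ref{MathResult3} to the twist $g=f\overline\chi\,n^{-it}$ rather than to $f$.

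Both routes are valid and rest on the same two pillars (the Bombieri--Vinogradov theorem of this paper and the nilsequence/inverse-theorem machinery around~\cite{Sha}), just composed in a different order. Your version has the advantage of making the ``pretentious'' obstruction $\chi(n)n^{it}$ completely explicit, and your verification that $g$ inherits the Siegel--Walfisz criterion (via CRT to clear $\chi$ and partial summation to clear $n^{-it}$, costing only a factor $(1+|t|)r$) is clean. The cost is that the inverse statement you invoke --- large $U^k$ on a progression $\Rightarrow$ large $U^1$ against a twisted character on the same progression, for $q$ as large as $x^{1/2-o(1)}$ --- is essentially the full content of~\cite{Sha} rather than a lemma one extracts from it; so you are not so much giving an alternative proof as repackaging the same argument with the quantifiers rearranged. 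The paper's phrasing, by contrast, keeps the nilsequence analysis inside the cited black box and applies Bombieri--Vinogradov only once, to $f$.
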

 
\begin{proof}
In view of Corollary~\ref{MathResult3}, the hypothesis on $f$ implies that
\begin{equation}\label{eq:BV-f} 
\sum_{q \leq Q} \max_{(a,q)=1} \left|\sum_{\substack{n \leq x \\ n \equiv a\pmod{q}}} f(n) \right| \ll \frac{x}{(\log x)^{B'}}, 
\end{equation}
for any constant $B' = B'(B)$, provided that $A$ is chosen large enough. The conclusion of Corollary~\ref{cor:uk-average} follows by using this estimate in place of the Bombieri-Vinogradov inequality for the Liouville function $\lambda$ in the argument in~\cite[Section 2.2]{Sha}.
 
More precisely, if $q \leq Q$ is an exceptional moduli in the sense that
\[ \|f(q \cdot +a_q)\|_{U^k(x/q)} \geq \ee \]
for some residue class $a_q\pmod{q}$, where $0 \leq a_q < q$ and $(a_q,q)=1$, then by the inverse theorem for Gowers norms~\cite{GTZ} $f$ must correlate with a nilsequence of complexity $O_{\ee}(1)$ on the progression $\{n \leq x : n \equiv a_q\pmod{q}\}$. Now apply~\cite[Lemma 2.4]{Sha} and follow the deduction of~\cite[Theorem 1.2]{Sha}, with $f$ in place of $\lambda$. The number of exceptional moduli satisfying~\cite[Equation (2.3)]{Sha} can be satisfactorily bounded using~\eqref{eq:BV-f}, and the rest of the argument already applies to a general multiplicative function $f$.
\end{proof}

The hypothesis \eqref{eq:savelogpower} of Corollary \ref{cor:uk-average} follows from assuming the Siegel-Walfisz criterion, along with  
\[ 
\sum_{\substack{n \leq x  }} f(n) \ll_C \frac{x}{(\log x)^C}
\]
uniformly in $x$.

Corollary \ref{cor:uk-average} only applies to smoothly supported $f$ since  we need to save  an arbitrary power of $\log x$ in the Bombieri-Vinogradov inequality for $f$  for the results in~\cite{Sha} to be applicable. However we would guess that our estimate
\[ \max_{\substack{0 \leq a < q \\ (a,q)=1}} \|f(q \cdot + a)\|_{U^k(x/q)} = o(1) \]
holds on average over $q \leq x^{1/2-o(1)}$, for any $1$-bounded multiplicative function $f$ satisfying the $1$-Siegel-Walfisz criterion.

%%%%%%%%%%%%%%%%%%

\end{document}